\numberwithin{equation}{section}
\newtheorem{main}{Theorem}
\newtheorem{thm}{Theorem}[section]
\newtheorem*{thm*}{Theorem}
\newtheorem{lem}[thm]{Lemma}
\newtheorem*{prob*}{Problem}
\newtheorem{fact}[thm]{Fact}
\newtheorem{prop}[thm]{Proposition}
\newtheorem*{prop*}{Proposition}
\newtheorem{cor}[thm]{Corollary}
\newtheorem*{cor*}{Corollary}
\theoremstyle{definition}
\newtheorem{defn}[thm]{Definition}
\newtheorem*{defn*}{Definition}
\newtheorem{remark}[thm]{Remark}
\newtheorem*{question*}{Question}
\newtheorem*{Pquestion*}{Popa's question}
\newtheorem*{conv*}{Convention}
\newcommand{\N}{\mathbb{N}}
\newcommand{\C}{\mathbb{C}}
\newcommand{\Z}{\mathbb{Z}}
\newcommand{\F}{\mathbb{F}}
\newcommand{\cH}{\mathcal{H}}
\newcommand{\cO}{\mathcal{O}}
\newcommand{\cS}{\mathcal{S}}
\newcommand{\cU}{\mathcal{U}}
\newcommand{\cV}{\mathcal{V}}
\newcommand{\fO}{\mathfrak{O}}
\newcommand{\fR}{\mathfrak{R}}
\newcommand{\ee}{\varepsilon}
\newcommand{\Img}{\operatorname{Im}}
\newcommand{\Real}{\operatorname{Re}}
\begin{document}

\title[ sequential commutation in tracial von Neumann algebras]
{ sequential commutation in tracial von Neumann algebras}

\author[Srivatsav Kunnawalkam Elayavalli]{Srivatsav Kunnawalkam Elayavalli}
\address{Department of Mathematical Sciences, UCSD, 9500 Gilman Dr, La Jolla, CA 92092, USA}\email{srivatsav.kunnawalkam.elayavalli@vanderbilt.edu}
\urladdr{https://sites.google.com/view/srivatsavke/home}

\author[Gregory Patchell]{Gregory Patchell}
\address{Department of Mathematical Sciences, UCSD, 9500 Gilman Dr, La Jolla, CA 92092, USA}\email{gpatchel@ucsd.edu}

\begin{abstract}

Recall that a unitary in a tracial von Neumann algebra is Haar if $\tau(u^n)=0$ for all $n\in \mathbb{N}$. We introduce and study a new Borel equivalence relation $\sim_N$ on  the set of  Haar unitaries in a diffuse tracial von Neumann algebra $N$. Two Haar unitaries $u,v$ in $\mathcal{U}(N)$ are related if there exists a finite path of sequentially commuting Haar unitaries in an ultrapower $N^\mathcal{U}$, beginning at $u$ and ending at $v$. We show that for any diffuse tracial von Neumann algebra $N$, the equivalence relation $\sim_N$ admits  either 1 orbit or uncountably many orbits. We characterize property Gamma in terms of path length and number of orbits of $\sim_N$ and also show the existence of non-Gamma II$_1$ factors so that $\sim_N$ admits only 1 orbit.   Examples where  $\sim_N$ admits uncountably many orbits include $N$ having positive 1-bounded entropy: $h(N)>0$. As a key example, we explicitly  describe $\sim_{L(\mathbb{F}_t)}$ for the free group factors. Using  these ideas we introduce a natural numerical invariant for diffuse tracial von Neumann algebras called the commutation diameter, with applications to elementary equivalence classification. This computes the largest value of the minimal path length over related unitaries.  We show that if $N$  admits one orbit, then the commutation diameter  is finite and moreover is an elementary equivalence invariant. By studying  the finer technical structure of lifts of certain commutators in ultrapowers  we   obtain non-trivial lower bounds for the family of arbitrary graph products $N$ of diffuse tracial von Neumann   algebras whose underlying graph is   connected and has diameter at least 4, and distinguish  them up to elementary equivalence from the  \cite{exoticCIKE} exotic factors, despite satisfying $h(N)\leq 0$.

\end{abstract}
\maketitle

\begin{center}
    \emph{\small{Dedicated to Marc A. Rieffel}} 
\end{center}

\section{Introduction}

The study of the structure of tracial von Neumann algebras by leveraging algebraic tools such as commutation has been central in the development of the subject.  Despite having no center, II$_1$ factors could have approximate central  sequences, i.e., property Gamma of Murray and von Neumann \cite{MuvN43}. The study of the structure of the central sequence algebra has been the source of many insights in the subject, with relation to classification and non-isomorphism results (\cite{McD1, McD2, Connes}), and more recently pertaining to  the new considerations in the last  decade involving model theory and structure of ultrapowers \cite{FHS, GH, BCI15}. Note that two II$_1$ factors $M,N$ are {\it elementarily equivalent} if and only if they admit isomorphic ultrapowers, $M^\mathcal U\cong N^\mathcal V$, for some ultrafilters $\mathcal U,\mathcal V$ on arbitrary sets \cite{FHS,hensoniovino}. Ideas involving commutation tend to naturally appear in the coarser problem of classifying II$_1$ factors up to elementary equivalence. More recently a new structural property involving commutation without the existence of central sequences was discovered in \cite{exoticCIKE}, with various applications. In the present article we are inspired by some of these ideas and subsequent  developments in \cite{houdayer2023asymptotic}. 

We build in this paper a new systematic viewpoint on the structure theory of tracial von Neumann algebra based on the idea of sequential commutation. One says that a sequence of elements $x_1,\hdots, x_n$ sequentially commute if $x_i$ commutes with $x_{i+1}$ for each $1\leq i\leq n-1$. This already imposes interesting analytic rigidity for the object these elements generate; for instance, on the level of countable groups, if $g_i$ are not torsion, this implies cost 1 (see \cite{gaboriau2000cout}) as well as vanishing of the first $\ell^2$-Betti number (see \cite{PetersonThom}, also for an explicit argument see Proposition 5.3 of \cite{brothier2022forestskein}).  In the von Neumann algebra setting, this has been considered at various points. Popa has considered a property resembling  sequential commutation in his Property C (\cite{Popa83}) and C' with Galatan (\cite{GalatanPopa}) related to the study of cohomology theory for von Neumann algebras (see also \cite{DykemaFreeEntropy, Hayes2018}). Voiculescu made a key computation of the free entropy dimension under a sequential commutation assumption \cite{VoiculescuPropT} (see also \cite{GeShenPropT}). Ge  and  Shen  in \cite{GeShenGen} proved single generation of certain von Neumann algebras generated by sequentially commuting unitaries, for instance $L(SL_3(\mathbb{Z}))$. Hayes in \cite{Hayes2018} 
 (see also \cite{JungSB}) showed that the family of diffuse tracial von Neumann algebras $N$ satisfying $h(N)>0$ cannot  be generated by sequentially commuting diffuse  unitaries. 

Recently in \cite{exoticCIKE} the authors constructed a non-Gamma factor $N$ satisfying the condition that any pair of independent unitaries (see Definition \ref{defn-indep}) in a dense family admits two sequentially commuting Haar unitaries between them. This was used in connection with a technical lifting lemma to derive applications to the structure of the ultrapowers of these factors. More recently \cite{houdayer2023asymptotic} proved using non-commutative $L^p$-space theory \cite{mei} the lack of a  sequentially commuting extension of size $2$ in certain pairs of elements  in free products. This was used again in conjunction with their new lifting lemma to show that  the factor $N$ in \cite{exoticCIKE} is not elementarily equivalent to $N*\mathbb{Z}$.     Having motivated the consideration, we are presently interested in developing a systematic  understanding  of the theory of sequential commutation in tracial diffuse von Neumann algebras, with new applications to considerations related to the structure theory of ultrapowers. The more appealing setup is to work inside the more flexible  setting  of the tracial ultrapower rather than just the ambient tracial diffuse von Neumann algebras, because it captures approximate sequential commutation.

Let $(N,\tau)$ be a diffuse tracial von Neumann algebra. Recall that a  unitary $u\in \mathcal{U}(N)$ is said to be Haar if $\tau(u^n)=0$ for all $n\in \mathbb{N}$. Let $\cU$ be a countably cofinal ultrafilter on a  set $I.$ We introduce and study the following equivalence relation on Haar unitaries $\mathcal{H}(N)$, denoted by $\sim_N$,  given by $u\sim_N v$ if there are $w_1,\ldots,w_n \in \mathcal{H}(N^\mathcal{U})$ such that $[u,w_1] =[w_k,w_{k+1}] = [w_n,v] = 0$ for all $1\leq k < n$. First we establish the following formalism of this equivalence relation.    
For any diffuse tracial von Neumann algebra $(N,\tau)$, we have that $\sim_N$ is a Borel equivalence relation, when endowing $\mathcal U(N)$ with the strong operator topology and then considering the restriction to $\mathcal H(N)$. The relation $\sim_N$ is in particular $F_\sigma$. Moreover $\sim_N$ does not depend on choice of the  ultrafilter. In connection with the above we also prove a useful lifting lemma of independent interest (see Lemma \ref{haar lifting OP}) for Haar unitaries in ultrapowers.

The first natural point of investigation is to characterize the existence of  genuine central sequences in terms of the above equivalence relation. To do this we need the notion of path lengths.   Let $u,v \in \cH(N).$ Then $p_N(u,v)$ denotes the length of the shortest possible sequentially commuting path between $u$ and $v$ in $N^{\mathcal{U}}.$ We define $p_N(u,u) = 0.$ If $[u,v]=0$ and $u\neq v$ then $p_N(u,v) = 1.$ If there is $w\in \cH(N^{\mathcal{U}})$ such that $[u,w]=[w,v]=0$ then $p_N(u,v)\leq 2.$ If $u\not\sim_{N} v$ then we set  $p_N(u,v) = \infty.$ We prove the appropriate characterization of property Gamma: $\sim_N$ admits a unique orbit, and the upper bound of 2 on the shortest path lengths between each pair of Haar unitaries. Surprisingly, on the contrary we find that  there are  many non-Gamma factors $N$  whose $\sim_N$ admits a unique orbit and interestingly feature a uniform upper bound on the shortest path lengths. For instance, there exist non-Gamma factors $N$ such that   for all $u_1,u_2\in \cH(N),$ $p_N(u_1,u_2) \leq 6.$

Next we obtain examples of II$_1$ factors $N$ such that $\sim_N$ admits uncountably many orbits.  Our examples come from 1-bounded entropy theory (\cite{JungSB, Hayes2018}). In particular, we use the Baire Category Theorem in combination with the observation that the 1-bounded entropy of the von  Neumann algebra generated by an orbit has to have vanishing 1-bounded entropy.  Using the recent resolution of the Peterson-Thom conjecture \cite{bordenave2023norm, PTkilled, HayesPT, HJKEPT}, we are also able to make the following explicit  computation which demonstrates the reach of the 1-bounded entropy methods for free group factors.   If $N$ is a II$_1$ factor satisfying $h(N)>0$, then $\sim_N$ admits uncountably many  orbits. If $N$ is moreover separable, then by Silver's dichotomy $\sim_N$ has exactly $\mathfrak c$ many orbits. In particular, if $M$ is a non-amenable II$_1$ factor such that $\sim_M$ admits only one orbit, then $M$ does not   embed into $L(\mathbb F_2)$. Moreover, let $u,v\in \mathcal{H}(L(\mathbb{F}_t))$ for $t>1$ or $t=\infty$. Then $u\sim v$ if and only if $\{u,v\}''$ is amenable. This result reveals a new intrinsic description of Pinsker algebras in free group factors (see \cite{FreePinsker, HayesPT}), which is of independent interest.

Our main goal for the paper is to introduce quite natural invariants for diffuse tracial von Neumann algebras that provide new insights for structure theory of ultrapowers.  The first notion should be understood as a ``commutation diameter,'' analogous and in a mild way inspired by the definition of diameter of a graph. 
    Let $N$ be a diffuse tracial von Neumann algebra. Denote the commutation diameter of $N$ by $\mathfrak{R}(N)= \sup_{u\sim_N v} p_N(u,v)$. We also need the following notion: Let $\mathfrak{O}(N)$  denote the number of orbits of $\sim_N$. $N$ is abelian, tracial, and diffuse if and only if $ \mathfrak{O}(N)= 1$ and $\mathfrak{R}(N) = 1;$ for II$_1$ factors, $N$  has property Gamma if and only if $\mathfrak{O}(N)= 1$ and $\mathfrak{R}(N) = 2$. For free group factors, $ \mathfrak{O}(L(\F_t))= \mathfrak{c}$  and $\mathfrak{R}(L(\F_t)) = 2$. From Theorem \ref{cike-one-orb} we have there exists a non-Gamma II$_1$ factor $N$ such that $\fO(N) = 1$ and $3 \leq \mathfrak{R}(N) \leq 6$.  For any diffuse tracial von Neumann algebra $(M,\tau)$ and ultrafilter $\mathcal{U}$, we also have     $\mathfrak{R}(M) \leq \mathfrak{R}(M^\cU)$ and $\mathfrak{O}(M) \leq \mathfrak{O}(M^\cU)$.  Moreover we show  the following notable results that reveal a new useful invariant for ultrapowers: 
\begin{main}\label{mainprop list}
For any diffuse tracial von Neumann algebra $(M,\tau)$, $\fO(M) = 1$ or $\fO(M) \geq \aleph_1.$ Moreover, if $\fO(M) = 1$, then $\fR(M) < \infty$ and $\mathfrak{R}(M)=\mathfrak{R}(M^\mathcal{U})$, i.e, the  commutation diameter is an invariant of elementary equivalence. 
\end{main}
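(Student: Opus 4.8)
The plan is to reduce everything to two ingredients: a \emph{reformulation lemma} expressing $p_M(u,v)\le k$ through purely finitary approximate data inside $M$ itself, and the Baire Category Theorem applied first to $\mathcal H(M)$ and then to $\mathcal U(M)$. \textbf{Step 0 (reformulation).} I would first show that for $u,v\in\mathcal H(M)$ and $k\in\N$, $p_M(u,v)\le k$ if and only if for every $\eps>0$ there are $w_1,\dots,w_{k-1}\in\mathcal U(M)$ with $\|[u,w_1]\|_2,\|[w_i,w_{i+1}]\|_2,\|[w_{k-1},v]\|_2<\eps$ and $|\tau(w_i^{\,l})|<\eps$ for all $i$ and all $l\le\lceil1/\eps\rceil$. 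The forward direction is immediate from representing sequences of the $w_i\in\mathcal H(M^{\mathcal U})$, since each relevant commutator and moment is $\|\cdot\|_2$-null along $\mathcal U$. For the converse one assembles the approximating tuples for $\eps=1/n$ along a countable cofinal chain for $\mathcal U$ into honest Haar unitaries $W_i\in M^{\mathcal U}$ with all required commutators vanishing, where the Haar–lifting Lemma~\ref{haar lifting OP} turns ``approximately Haar'' into genuinely Haar. Two consequences: (i) each $R_k:=\{(u,v)\in\mathcal H(M)^2:p_M(u,v)\le k\}$ is SOT-closed (a limit of length-$k$ paths, extracted diagonally through the ultrapower, is again a length-$k$ path); (ii) the conjunction ``$\fO(M)=1$ and $\fR(M)\le R$'' is axiomatized by a countable set $\Sigma_R$ of conditions in the continuous first-order theory of $(M,\tau)$ (for all unitaries $u,v$ that are Haar---the latter a $\bigcap$ of closed conditions---one has $\inf_{w_1,\dots,w_{R-1}}\max(\cdots)=0$).

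\textbf{Step 1 (the dichotomy).} Since a sequentially commuting path uses only countably many coordinates, $u\sim_N v$ depends only on a separable subalgebra containing $u,v$, so I may assume $M$ separable; then $\mathcal H(M)$ is a nonempty Polish space and $\sim_M$, being $F_\sigma$, has the Baire property, as does every orbit. The key claim is that a non-meager $\sim_M$-orbit equals all of $\mathcal H(M)$. Granting this: if $2\le\fO(M)\le\aleph_0$ then by Baire some orbit $O_i$ among the countably many is non-meager, hence all of $\mathcal H(M)$, contradicting $\fO(M)\ge 2$; so $\fO(M)=1$ or $\fO(M)\ge\aleph_1$. To prove the claim, a non-meager orbit $O$ is comeager in a nonempty open $U$, hence SOT-dense in $U$, so $W^*(O)\supseteq W^*(U)$, which is ``large'' (equal to $M$ when $M$ is a factor, since a nonempty SOT-open set of Haar unitaries generates); combining this with the observation---already used for the $h(N)>0$ examples---that the algebra generated by a single orbit has vanishing $1$-bounded entropy, together with a homogeneity argument using that $p_M$ is invariant under diagonal conjugation by $\mathcal U(M)$ and that the conjugacy class of any Haar unitary is SOT-dense in $\mathcal H(M)$, one upgrades ``$W^*(O)$ is everything'' to ``$O$ is everything.'' This last upgrade is the main obstacle of Step 1.

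\textbf{Step 2 ($\fO(M)=1\Rightarrow\fR(M)<\infty$).} Working again with $M$ separable (the general case by a standard reflection argument), fix $a_0\in\mathcal H(M)$ generating a MASA (generic Haar unitaries do). Since $\fO(M)=1$ and, by (i), each set $\{g\in\mathcal U(M):p_M(a_0,ga_0g^{*})\le n\}$ is SOT-closed and their union is $\mathcal U(M)$, Baire gives a nonempty open $V'\subseteq\mathcal U(M)$ and $N'$ with $p_M(a_0,ga_0g^{*})\le N'$ for $g\in V'$; a telescoping estimate using conjugation-invariance and the triangle inequality for $p_M$ then yields $p_M(a_0,ga_0g^{*})\le mN'$ whenever $g$ is a product of $m$ elements of $V'$. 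Separately, Baire plus a doubling trick applied to the closed sets $R_k\subseteq\mathcal H(M)^2$ produces a nonempty open $U'$ with $U'\times U'\subseteq R_C$; since approximate unitary equivalence of Haar unitaries makes the conjugacy class of any $a_0'\in U'$ SOT-dense, the translates $\{gU'g^{*}:g\in\mathcal U(M)\}$ cover $\mathcal H(M)$, so every Haar unitary lies within $p_M$-distance $C$ of the conjugacy class of $a_0'$. It remains to bound the word length of elements of $\mathcal U(M)$ over the open neighbourhood $V'$, i.e.\ to use that $\mathcal U(M)$ is a bounded topological group; this is the main obstacle of Step 2, and is where one invokes (or supplies a direct substitute for) property (OB) of $\mathcal U(M)$.

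\textbf{Step 3 ($\fR$ is an elementary-equivalence invariant).} By Step 2 we have $\fR(M)=R<\infty$, hence $M\models\Sigma_R$ by (ii). Any ultrapower is elementarily equivalent to its base, so $M^{\mathcal U}\models\Sigma_R$, i.e.\ $\fO(M^{\mathcal U})=1$ and $\fR(M^{\mathcal U})\le R$; together with the already established $\fR(M)\le\fR(M^{\mathcal U})$ this gives $\fR(M)=\fR(M^{\mathcal U})$. Running the same argument with any $M'\equiv M$ in place of $M^{\mathcal U}$ shows $\fR(M')=\fR(M)$ whenever $\fO(M)=1$. Step~3 is soft once (ii) is in hand, the only care being the routine encoding of the infinitary ``Haar'' condition inside the continuous theory. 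In summary, the difficulty of the theorem is concentrated in Steps~1 and~2---the passage from ``large in the sense of Baire category'' to ``global''---for which conjugation-homogeneity of $\mathcal H(M)$ and the lifting technology are the essential tools, while Step~3 is a short model-theoretic transfer.
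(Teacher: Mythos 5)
There is a genuine gap, and you have in fact flagged it yourself: the ``main obstacles'' you leave open in Steps~1 and~2 are precisely the heart of the theorem, and the surrounding scaffolding does not close them. The fundamental difficulty is that unitary conjugation does \emph{not} preserve $\sim_M$-orbits (it permutes them: $g\,\cO_M(u)\,g^* = \cO_M(gug^*)$), so neither ``$W^*(O)$ is everything'' nor ``every Haar unitary is close to the conjugacy class of $a_0'$'' can be upgraded to membership in the orbit $O$ by homogeneity alone; the $1$-bounded entropy remark is irrelevant to the dichotomy (it is only used in the paper for the $h(M)>0$ examples). The idea you are missing is the following. Baire category applied to the closed sets $\cO_{M,k}(u)$ (closed by the density lemma) gives a ball $\{w\in\cH(M): \|w-v\|_2<\ee\}\subseteq \cO_{M,k}(u)$. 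Given an arbitrary $w\in\cH(M)$, Jung's theorem (Proposition \ref{jung-atkinson}) produces a unitary $x=e^{ia}\in\cU(M^\cU)$ with $\|a\|_\infty\le\pi$ conjugating $v$ into $W^*(w)$. Writing $x=(e^{ia/m})^m$ with $m\approx 2\pi/\ee$, each factor moves $v$ by less than $\ee$ \emph{in operator norm}, hence each intermediate conjugate lies back in the ball and so in $\cO_{M,k}(u)$; chaining these $m$ steps via the triangle inequality for $p_M$ gives $p_M(xvx^*,v)\le 2km$, and since $xvx^*\in W^*(w)$ commutes with $w$ one concludes $w\sim_M u$ with a bound $1+2km+k$ that is uniform in $w$. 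This single argument delivers both the dichotomy and $\fR(M)<\infty$ simultaneously; your Step~2 detour through property (OB) of $\cU(M)$ is unnecessary precisely because the operator-norm control on $e^{ia/m}$ makes the ``word length'' bound automatic.

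Two further problems. First, your reduction to the separable case in Step~1 is unjustified and goes the wrong way: $u\sim_N v$ for a separable $N\ni u,v$ implies $u\sim_M v$ but not conversely, and the paper needs no separability (Baire applies to the complete metric space $\cH(M)$ directly; Silver is invoked only for the $\aleph_1$ versus $\mathfrak c$ refinement). Second, you silently assume $M$ is a factor; the general diffuse case requires the central decomposition $M=Mz_0\oplus\bigoplus_n Mz_n$ and running the conjugation argument cutting down by each central projection, which is where a nontrivial portion of the paper's proof lives. Your Step~3 is essentially sound, though the ``routine encoding'' of quantification over Haar unitaries conceals the quantitative content of the Haar lifting lemma; the paper sidesteps axiomatizability entirely by lifting a Haar unitary of $M^\cU$ to Haar unitaries of $M$ and applying the ultrapower density lemma, which you would do well to adopt.
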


We obtain in this paper an application to elementary equivalence classification (for some results in this project see \cite{FGL, FHS, GH, BCI15, AGKE, exoticCIKE, goldbring2023uniformly, houdayer2023asymptotic}). By developing new technical insights on the structure of lifts of commutators in ultrapowers (an instance of which is an  asymptotic orthogonality phenomenon in graph products), we are able to prove the result below, which demonstrates the usefulness of our theory of sequential commutation. 

\begin{main}\label{artin}
        Let $N$ be a  graph product of diffuse tracial von Neumann  algebras where the underlying graph is   connected and has diameter at least 4. Then $\mathfrak{R}(N)\geq 4$ and moreover $N$ is not elementarily equivalent to the \cite{exoticCIKE} non-Gamma II$_1$ factors. 
\end{main}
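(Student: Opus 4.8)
Write $V$ for the vertex set of the defining graph $\Gamma$, and for $c\in V$ let $S_c\subseteq V$ consist of $c$ together with all vertices adjacent to $c$; write $N_{S_c}\subseteq N$ for the corresponding graph-product subalgebra. Since $\diam(\Gamma)\ge 4$, fix $a,b\in V$ with $d_\Gamma(a,b)\ge 4$. I use the distance hypothesis only through two elementary graph facts: no edge of $\Gamma$ joins a vertex of $S_a$ to a vertex of $S_b$, and no edge joins $b$ to a vertex of $S_a$ (either would produce a path of length at most $3$ from $a$ to $b$). Hence $N_{S_a}$ and $N_{S_b}$ are free in $N$, and $N_{S_a}$ and $N_b$ are free in $N$; as freeness passes to ultrapowers, $N_{S_a}^{\mathcal W}\cap N_{S_b}^{\mathcal W}=\mathbb C 1=N_{S_a}^{\mathcal W}\cap N_b^{\mathcal W}$ for every ultrafilter $\mathcal W$. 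Fix Haar unitaries $u\in N_a$ and $v\in N_b$; since $N_a$ and $N_b$ are free, $u$ and $v$ are independent in the sense of Definition~\ref{defn-indep}. That $u\sim_N v$ is immediate from connectedness: along a geodesic $a=v_0,\dots,v_m=b$ pick Haar unitaries $w_i\in N_{v_i}$ for $0<i<m$ and set $w_0=u$, $w_m=v$; consecutive vertices are adjacent, so $u,w_1,\dots,w_{m-1},v$ is a sequentially commuting path of Haar unitaries already inside $N\subseteq N^{\mathcal U}$.

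The core of the argument is the inequality $p_{N^{\mathcal W}}(u,v)\ge 4$ for every ultrafilter $\mathcal W$, which together with $u\sim_N v$ gives $\mathfrak R(N)\ge 4$. I would isolate and prove a \emph{localization lemma for relative commutants of Haar unitaries in graph products}: for every $c\in V$, every Haar unitary $x\in N_c$, and every ultrafilter $\mathcal W$ one has $\{x\}'\cap N^{\mathcal W}\subseteq N_{S_c}^{\mathcal W}$; equivalently, if $(y_k)\subseteq N$ is bounded and $\lim_{k\to\mathcal W}\|[x,y_k]\|_2=0$, then $\lim_{k\to\mathcal W}\|y_k-E_{N_{S_c}}(y_k)\|_2=0$. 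Granting this, a sequentially commuting path $u=z_0,z_1,\dots,z_\ell=v$ of Haar unitaries in $N^{\mathcal W}$ with $\ell\le 3$ is impossible: $\ell=0$ would give $u=v$, excluded since $\tau(u^*v)=0$; $\ell=1$ would give $v\in\{u\}'\cap N^{\mathcal W}\subseteq N_{S_a}^{\mathcal W}$, so $v\in N_{S_a}^{\mathcal W}\cap N_b^{\mathcal W}=\mathbb C1$, contradicting that $v$ is Haar; $\ell=2$ would give $z_1\in N_{S_a}^{\mathcal W}\cap N_{S_b}^{\mathcal W}=\mathbb C1$, again impossible; and $\ell=3$ would make $z_1\in N_{S_a}^{\mathcal W}$ and $z_2\in N_{S_b}^{\mathcal W}$ free Haar unitaries with $[z_1,z_2]=0$, which fails because $\langle z_1z_2,z_2z_1\rangle=\tau(z_1^*z_2^*z_1z_2)=0$ (an alternating product of four centered elements of the free pair $\{z_1\}'',\{z_2\}''$), giving $\|[z_1,z_2]\|_2^2=2$.

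For the elementary inequivalence, consider the property $(\star)$ of a tracial von Neumann algebra $M$: every pair of independent Haar unitaries in $M$ is joined by a sequentially commuting path of at most three Haar unitaries in $M^{\mathcal U}$. This is visibly preserved under $*$-isomorphism. By \cite{exoticCIKE} each relevant non-Gamma factor $N_0$ carries a $\|\cdot\|_2$-dense family of independent Haar unitary pairs at commutation-distance at most $3$; combining this with the Haar lifting lemma (Lemma~\ref{haar lifting OP}) and a diagonal argument — approximating a lift of a given independent pair by members of the dense family and checking that the witnessing commutation relations survive in a further ultrapower — shows that every ultrapower of $N_0$ satisfies $(\star)$. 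Conversely, applying the localization lemma in the ultrapower of $N^{\mathcal U}$ that computes $p_{N^{\mathcal U}}(u,v)$ (again an ultrapower of $N$), exactly as above, shows that the independent Haar unitaries $u\in N_a$ and $v\in N_b$ have commutation-distance at least $4$ in $N^{\mathcal U}$; thus $N^{\mathcal U}$ fails $(\star)$. Were $N$ elementarily equivalent to such an $N_0$, then $N^{\mathcal U}\cong N_0^{\mathcal V}$ for suitable ultrafilters \cite{FHS,hensoniovino}, and $N_0^{\mathcal V}$ would satisfy $(\star)$ — a contradiction. Hence $N\not\equiv N_0$.

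I expect the localization lemma to be the main obstacle; it is an \emph{asymptotic orthogonality} statement, to the effect that an element asymptotically commuting with the single Haar unitary $x\in N_c$ cannot ``leave'' the star of $c$. The plan is to run the reduced-word (normal-form) calculus of the graph product on a lift $(y_k)$: decompose $y_k=E_{N_{S_c}}(y_k)+y_k^\perp$, expand $y_k^\perp$ over reduced words having at least one syllable from $V\setminus S_c$, and bound $\|[x,y_k^\perp]\|_2$ from below, using that $x\in N_c$ either absorbs an adjacent $N_c$-syllable or is blocked by a non-commuting syllable outside $S_c$, together with orthogonality of distinct reduced words; quantitatively this is cleanest through noncommutative $L^p$/Khintchine-type inequalities for graph products in the spirit of \cite{mei,houdayer2023asymptotic}. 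The delicate point is that $x$ is a fixed element, so the estimate must be uniform along the tail of the sequence — diffuseness of the Haar unitary $x$ is what rules out degeneracies — while a softer subtlety is the saturation upgrade used above, namely that the dense family of ``good'' independent pairs in $N_0$ genuinely exhausts all independent pairs in the ultrapower.
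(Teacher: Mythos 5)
Your overall skeleton — rule out paths of length at most $3$ between Haar unitaries sitting at distance $\ge 4$, then use the dense family of short paths in the \cite{exoticCIKE} factors plus lifting/saturation to get elementary inequivalence — is the paper's strategy. The fatal problem is that your localization lemma is false, and every case $\ell=1,2,3$ of your exclusion argument invokes it. Here is a counterexample. Pick a vertex $d\notin S_c$ (it exists since $\diam\ge 4$), and Haar unitaries $x\in\cH(N_c)$, $g\in\cH(N_d)$. Since $c\neq d$ are non-adjacent, $N_c$ and $N_d$ are free, so $\{x,g\}''\cong L(\F_2)$ and the unitaries $g_j:=x^jgx^{-j}$, $j\in\Z$, are the images of the free generators $a^jba^{-j}$ of the normal closure of $b$ in $\F_2$; in particular they are orthonormal in $L^2$, each is orthogonal to $N_{S_c}$ (each reduced word involves the syllable $g$ at the vertex $d\notin S_c$), and by the Leinert/Haagerup bound $\bigl\|\sum_j\lambda_jg_j\bigr\|_\infty\le 2\bigl(\sum_j|\lambda_j|^2\bigr)^{1/2}$. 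Set
\[
y_k=\frac{1}{\sqrt{2k+1}}\sum_{|j|\le k}x^jgx^{-j},\qquad\text{so }\ \|y_k\|_\infty\le 2,\ \ \|y_k\|_2=1,\ \ E_{N_{S_c}}(y_k)=0,
\]
and $xy_kx^{-1}-y_k=(2k+1)^{-1/2}\bigl(x^{k+1}gx^{-k-1}-x^{-k}gx^{k}\bigr)$ has $\|\cdot\|_2=\sqrt{2/(2k+1)}\to 0$. Hence $y=(y_k)_k$ is a nonzero element of $\{x\}'\cap N^{\cU}$ with $E_{N_{S_c}^{\cU}}(y)=0$, refuting both $\{x\}'\cap N^{\cW}\subseteq N_{S_c}^{\cW}$ and the "equivalent" asymptotic form you state. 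Restricting to Haar unitaries does not help: $\{x\}'\cap N^{\cU}$ is a diffuse von Neumann algebra, hence generated by its Haar unitaries, so if all of them lay in $N_{S_c}^{\cU}$ the whole relative commutant would. This is precisely the phenomenon your "reduced-word calculus" plan cannot overcome: mass spread slowly over the conjugates $x^jgx^{-j}$ commutes asymptotically with $x$ while living entirely outside the star.

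What is actually true, and what the paper proves (Lemma \ref{annoying lemma} and its adaptation in the proof of Theorem \ref{thm-graph-radius}, via the coefficient-spreading estimate of Lemma \ref{lemma-lower-bound}), is only the vanishing of the conditional expectation onto the much smaller separating subalgebra $B$ generated by the vertices at distance exactly $2$ from $a$: for $w\in\{u\}'\cap\cH(N^{\cU})$ one has $E_{B^{\cU}}(w)=0$. Note my element $y$ is consistent with this, since $B\subseteq N_{S_c}$ and $E_{B^\cU}=E_{B^\cU}\circ E_{N_{S_c}^\cU}$. This weaker statement does not localize $w$ anywhere, but it is exactly the hypothesis of the Houdayer--Ioana free independence theorem (Theorem \ref{HI independence}) applied to the amalgamated decomposition $N\cong M_1*_BM_2$ separating $a$ from $b$: it yields directly that any $w_1\in\{u\}'\cap\cH(N^{\cU})$ and $w_2\in\{v\}'\cap\cH(N^{\cU})$ are freely independent, hence cannot commute (your length-four-word trace computation), which kills paths of length $3$ (and lengths $1,2$, where a middle unitary would be free from itself). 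To repair your argument, replace the localization lemma by this conditional-expectation statement and route the freeness through Theorem \ref{HI independence} rather than through containment in free subalgebras; the rest of your write-up, including the elementary-inequivalence endgame, then goes through essentially as in the paper.
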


Recall that the notion of a graph product is a common generalization of free and tensor products introduced by \cite{Gr90} for groups and then in \cite{CaFi17} for von Neumann algebras. The {diameter}                     of a graph $\Gamma$ is given by $\sup_{v,w\in V(\Gamma)} d(v,w).$ In \cite{charlesworth2023strong2} it is shown that $N$ as in Theorem \ref{artin} is non-Gamma; moreover, it is shown in \cite{charlesworth2023strong} that $h(N)\leq 0$; therefore, the above result is new and not covered by existing parts of Theorem A in \cite{exoticCIKE} and Theorem F in  \cite{houdayer2023asymptotic}. An illustrative example of such an $N$ is the group von Neumann algebra of the right angled Artin group whose underlying graph is a line segment with 5 points.  

We end  with a natural line of investigation that is opened up by this work, potentially leading to the construction of a countable infinity of  concrete pairwise elementarily inequivalent non-Gamma factors. To do this we define a modified \cite{exoticCIKE} construction on $N$ by gluing handles of sequentially commuting  $n$ paths of Haars on pairs of independent unitaries (see Section \ref{CIKE construction}), and call it $\mathcal{S}_n(N)$. We conjecture that
 $\mathfrak{R}(\mathcal{S}_n(\mathbb{F}_2))\geq n+1$; in particular, $\mathcal{S}_{2^n - 2}(\mathbb{F}_2)$ gives an infinite family of mutually non-elementary equivalent non-Gamma II$_1$ factors. 
The proof of the in particular part uses our computation that $\mathfrak{R}(\mathcal{S}_n(\mathbb{F}_2))\leq 2(n+1)$ (see Proposition \ref{upperbound}). There are two conceptual difficulties in establishing the above conjecture. Firstly, in the process of proving the first step of non-Gamma, we crucially required in \cite{exoticCIKE} the use of the input $N$ to have Property (T), which we do not have in this situation. Secondly, at the moment, we are unable to compute any lower bounds beyond $4$.  However, we now understand more of the global structure of the free group factors (\cite{HJKEPT}), which we believe will help in proving the above conjecture. In fact the main motivation for the conjecture comes from Theorem \ref{thm-FGF} in particular.

\subsection*{Structure of the paper}
In Section 2, we recall some basic facts and important theorems from the literature that we will use in this paper. In Section 3 we define $\sim_N$, establish the basic formalisms and prove a useful ultrapower density lemma. Section 4 is where we relate Property Gamma to $\mathfrak{O}(N)$ and $\fR(N)$. Section 5 shows the existence of II$_1$ factors with uncountably many orbits,  and fully clarifies the particular case of the free group factors. The paper culminates in Section 6 wherein we introduce and prove our main results about the commutation diameter,  and Theorem \ref{artin}. 
\subsection*{Acknowledgements} We thank David Sherman for a motivating conversation in the Spring of 2023 that in hindsight underpins the formation of this paper.   We thank Adrian Ioana and Forte Shinko for helpful discussions, Adrian Ioana and Cyril Houdayer for helpful comments and Srivatsa Srinivas for help with generating figures and for comments around the bound in Lemma \ref{lemma-lower-bound}. We warmly thank Jesse Peterson for sending us several insightful remarks on a previous draft. We thank an anonymous referee for helpful suggestions that improved the exposition.

\section{Preliminaries}

\subsection{Background and notation on relevant aspects of tracial von Neumann algebras}
Let $(M,\tau)$ be a tracial von Neumann algebra, i.e., a pair consisting of a von Neumann algebra $M$ and a faithful normal tracial state $\tau:M\rightarrow\mathbb C$. If in addition $M$ is infinite dimensional and $\mathcal{Z}(M) := M'\cap M= \mathbb{C}$, then $M$ is II$_1$ factor.  We denote by $\mathcal U(M)$ the group of unitaries in $M$ and by $\mathcal{H}(M)$ the set of Haar unitaries in $M$, i.e., all unitaries such that $\tau(u^n) = 0$ for all $n\in \mathbb{N}$. Observe that any unitary conjugate of a Haar unitary is again Haar. Observe also that if $(M,\tau)$ is a separable diffuse tracial von Neumann algebra  then $M$ is the SOT-closed span of $\mathcal{H}(M)$ and $1$; indeed, any self-adjoint $a\in M$ lives in a diffuse abelian von Neumann subalgebra which is isomorphic to $L(\mathbb{Z})$ whose standard generator is a Haar unitary.

Denote also $M_{\text{sa}}$ the set of self-adjoint elements of $M$. Given a self-adjoint set $S\subset M$, von Neumann's bicommutant theorem implies that $S''$ is the smallest unital von Neumann subalgebra of $M$ containing $S$.

Let $\mathcal{U}$ be an ultrafilter on a set $I$, we denote by $M^{\mathcal U}$ the \emph{tracial ultraproduct} i.e, the quotient $\ell^\infty(I,M)/\mathcal{J}$ by the closed ideal $\mathcal{J}\subset\ell^\infty(I,M)$  consisting of $x=(x_n)_{n\in I}$ with $\lim\limits_{n\rightarrow\mathcal U}\|x_n\|_2= 0$.  $M$ admits a natural diagonal inclusion $\iota:M\to M^\cU$ given by $\iota(x) = (x)_{n\in I}$. For notational simplicity we identify $M$ with $\iota(M).$

We recall the following folklore result about embeddings into ultrapowers which is an immediate corollary of Proposition 2.6.4 of \cite{thesis}.

\begin{prop}\label{jung-atkinson}
    Let $N$ be a II$_1$ factor with trace $\tau$ and $M$ a separable hyperfinite tracial von Neumann algebra. Let $\cU$ be a free ultrafilter on $\N.$ If $\pi,\rho:M\to N^\cU$ are trace-preserving embeddings, then $\pi,\rho$ are unitarily conjugate.

    In particular, if $u,v\in \cH(N^\cU)$ are Haar unitaries then there is a unitary $w\in \cU(N^\cU)$ such that $v = wuw^*$.
\end{prop}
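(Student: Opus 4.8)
The statement comprises two parts. The plan is to take the first --- that any two trace-preserving embeddings of a separable hyperfinite $M$ into $N^\cU$ are unitarily conjugate --- as (a restatement of) Proposition~2.6.4 of \cite{thesis}, and then to obtain the ``in particular'' clause as a short specialization; but it is worth recording the mechanism behind the first part, as it clarifies what is actually being used. I would write $M$ as the $\|\cdot\|_2$-closure of an increasing union $\bigcup_n A_n$ of finite-dimensional unital $*$-subalgebras --- possible precisely because $M$ is separable and hyperfinite, i.e.\ approximately finite dimensional --- and fix a $\|\cdot\|_2$-dense sequence $(x_k)_{k\in\N}$ in the unit ball of $M$. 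It then suffices to produce a single unitary $w\in\cU(N^\cU)$ with $w\pi(x_k)w^*=\rho(x_k)$ for all $k$, since density and $\|\cdot\|_2$-continuity promote this to $w\pi(\cdot)w^*=\rho(\cdot)$ on $M$.

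The elementary input is the finite-dimensional case: if $(A,\mathrm{tr})$ is a finite-dimensional tracial von Neumann algebra and $\alpha,\beta\colon A\to Q$ are trace-preserving embeddings into a II$_1$ factor $Q$, then $\alpha,\beta$ are unitarily conjugate in $Q$; I would prove this by passing to the matrix-block decomposition of $A$, so that $\alpha,\beta$ amount to two systems of matrix units in $Q$ with the same block traces, using factoriality of $Q$ to make the corresponding block projections Murray--von Neumann equivalent, and assembling the implementing partial isometries into a unitary $w$ with $w\alpha(\cdot)w^*=\beta(\cdot)$. Applying this to $A=A_n$ and $Q=N^\cU$ (which is again a II$_1$ factor), with $n$ large enough that each $x_k$ for $k$ in a prescribed finite set $F$ lies within $\eps/2$ of $A_n$, produces for every finite $F\subseteq\N$ and every $\eps>0$ a unitary $w$ with $\|w\pi(x_k)w^*-\rho(x_k)\|_2<\eps$ for all $k\in F$. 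Hence the countable type over $N^\cU$ in the single variable $w$ consisting of ``$w$ is a unitary'' together with ``$\|w\pi(x_k)w^*-\rho(x_k)\|_2=0$'' for $k\in\N$ is approximately finitely satisfiable; since $N^\cU$ is countably saturated --- this being exactly where the hypothesis that $\cU$ is a free (equivalently, countably incomplete) ultrafilter on $\N$ enters --- the type is realized, yielding the desired $w$. I expect the genuine obstacle to lie precisely here: the naive sequence of stagewise conjugating unitaries need not converge in $\|\cdot\|_2$, so one passes to the ultrapower and uses its countable saturation to close the gap, and hyperfiniteness of $M$ is likewise indispensable since it is what supplies the finite-dimensional approximants $A_n$.

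For the last assertion, let $u\in\cH(N^\cU)$. From $\tau(u^n)=0$ for $n\geq 1$ and $\tau((u^*)^n)=\overline{\tau(u^n)}=0$, the $*$-distribution of $u$ coincides with that of the canonical Haar unitary $z$ generating $L(\Z)\cong L^\infty(\T)$; hence $z\mapsto u$ extends to a trace-preserving embedding $\iota_u\colon L(\Z)\to N^\cU$ with range $\{u\}''$, and likewise $\iota_v\colon L(\Z)\to N^\cU$ with $z\mapsto v$. Since $L(\Z)$ is abelian with separable predual it is separable and hyperfinite, so the first part applied to $\iota_u$ and $\iota_v$ furnishes $w\in\cU(N^\cU)$ with $w\iota_u(x)w^*=\iota_v(x)$ for all $x\in L(\Z)$; evaluating at $x=z$ gives $wuw^*=v$.
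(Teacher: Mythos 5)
Your proof is correct and matches the paper's treatment: the paper simply quotes the first part as Proposition~2.6.4 of \cite{thesis} (whose standard proof is exactly the finite-dimensional matrix-unit argument plus countable saturation of $N^\cU$ that you sketch), and the ``in particular'' clause follows, as you say, by viewing $u$ and $v$ as images of the canonical generator of $L(\Z)$ under two trace-preserving embeddings. No gaps.
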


A II$_1$ factor $M$ has {\it property Gamma} if  $M'\cap M^{\mathcal U}\neq \mathbb{C}1$, for a free  ultrafilter $\mathcal U$ on $\mathbb N$. The following is an equivalent characterization of property Gamma.

\begin{thm}[see Proposition 3.8 of \cite{shen2019reducible}]\label{shenshi}
    Let $M$ be a II$_1$ factor. If for all $x\in M$ we have that $W^*(x)'\cap M^\cU\neq \C1$, then $M$ has property Gamma. 
\end{thm}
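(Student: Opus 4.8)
The plan is to argue by contraposition: assuming $M$ does not have property Gamma, I will produce a single $x\in M$ with $W^*(x)'\cap M^{\mathcal{U}}=\mathbb{C}1$, which contradicts the hypothesis. One may assume $M$ is separable (the general case is analogous). The opening move is the standard \emph{spectral gap} reformulation of the failure of Gamma: from $M'\cap M^{\mathcal{U}}=\mathbb{C}1$ together with separability of $M$ and $\aleph_1$-saturation of $M^{\mathcal{U}}$ (equivalently, by Connes' characterization) one extracts unitaries $u_1,\dots,u_n\in\mathcal{U}(M)$ and $\delta>0$ with
\[
\|z-\tau(z)1\|_2\;\le\;\delta\sum_{i=1}^{n}\|[u_i,z]\|_2\qquad\text{for all }z\in M^{\mathcal{U}}\text{ with }\|z\|\le 1.
\]
In particular $\{u_1,\dots,u_n\}'\cap M^{\mathcal{U}}=\mathbb{C}1$, so the subfactor $N:=W^*(u_1,\dots,u_n)$ of $M$ has trivial relative commutant in $M^{\mathcal{U}}$.

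The heart of the argument — and the step I expect to be the main obstacle — is to replace the finite family $u_1,\dots,u_n$ by a \emph{single} element, i.e.\ to find $x\in M$ with $W^*(x)'\cap M^{\mathcal{U}}=\mathbb{C}1$. When the spectral gap can be witnessed by at most two unitaries this is immediate: writing $u_j=\exp(ih_j)$ with $h_j=h_j^*\in W^*(u_j)$ (Borel functional calculus), the non-normal element $x:=h_1+ih_2$ satisfies $W^*(x)=W^*(h_1,h_2)=W^*(u_1,u_2)$, whence $W^*(x)'\cap M^{\mathcal{U}}=\{u_1,u_2\}'\cap M^{\mathcal{U}}=\mathbb{C}1$. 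For general $n$, however, one genuinely needs a single element whose generated algebra has spectral gap, and this is delicate: one cannot simply reduce to single generation of $N$, since whether every finitely generated II$_1$ factor is singly generated is open. The approach I would take exploits the ambient \emph{factor} structure of $M$ (and this is exactly where the hypothesis that $M$ is a factor, not merely diffuse, enters): fix a matrix unit system $\{e_{pq}\}_{p,q=1}^{k}$ in $M$, realize each $u_i$ through the amplification $M\cong M_k(\mathbb{C})\otimes e_{11}Me_{11}$, and assemble $x$ by packing the matrix entries of the $u_i$ into the slots of a sufficiently large matrix together with a diagonal part of pairwise distinct eigenvalues, so designed that the matrix units, then the entries, then the $u_i$ themselves are recovered from $x$ by functional calculus. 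Making this encoding precise — so that indeed $N\subseteq W^*(x)$ for a single $x\in M$ — is the technical content of \cite[Proposition 3.8]{shen2019reducible}, and I expect the combinatorial bookkeeping there to be the real work.

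Granting such an $x$, one obtains $W^*(x)'\cap M^{\mathcal{U}}\subseteq\{u_1,\dots,u_n\}'\cap M^{\mathcal{U}}=\mathbb{C}1$, contradicting the assumption that $W^*(x)'\cap M^{\mathcal{U}}\neq\mathbb{C}1$ for every $x\in M$. Hence $M$ must have property Gamma.
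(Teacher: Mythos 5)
Your overall architecture is the natural one, and it is worth saying up front that the paper itself offers no proof of this statement -- it is quoted directly from Proposition 3.8 of \cite{shen2019reducible} -- so there is no internal argument to compare against. Your contrapositive setup (non-Gamma $\Rightarrow$ Connes' spectral gap witnessed by finitely many unitaries $u_1,\dots,u_n$, hence $\{u_1,\dots,u_n\}'\cap M^{\cU}=\C 1$) is correct, as is your treatment of the case $n\le 2$ via $x=h_1+ih_2$. You also correctly identify the crux: one must produce a \emph{single} $x$ with $W^*(x)'\cap M^{\cU}=\C 1$, and one cannot appeal to single generation of $W^*(u_1,\dots,u_n)$ because that is the open generator problem.

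The genuine gap is in your proposed resolution of that crux. The matrix-packing encoding you sketch cannot deliver ``$N\subseteq W^*(x)$'' for $N=W^*(u_1,\dots,u_n)$: under the amplification $M\cong \mathbb{M}_k(\C)\otimes e_{11}Me_{11}$, each $u_i$ decomposes into $k^2$ corner entries, so you must pack $nk^2$ elements of the corner into at most $k^2-k$ off-diagonal slots; both quantities scale as $k^2$, so enlarging $k$ never helps once $n\ge 2$. More fundamentally, the statement you attribute to the citation -- that every finitely generated von Neumann subalgebra $N$ of a II$_1$ factor is contained in a singly generated one -- applied to a finite generating set of $M$ itself would force $W^*(x)=M$, i.e., it is \emph{equivalent to} the generator problem for finitely generated II$_1$ factors, which you yourself correctly flag as open. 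So your sketch is internally inconsistent: the ``combinatorial bookkeeping'' you defer to \cite{shen2019reducible} cannot be what that proposition actually does. Whatever the cited argument is, it must produce a singly generated subalgebra with trivial relative commutant in $M^{\cU}$ by some route that does \emph{not} swallow an arbitrary prescribed finitely generated subalgebra (note also that any such $x$ must be non-normal and $W^*(x)$ must be a subfactor, since $W^*(x)'\cap M^{\cU}\supseteq \cZ(W^*(x))$). As written, your proof reduces the theorem to a claim that is at least as hard as a famous open problem, so the key step is missing.
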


\subsection{A lifting lemma} The following Haar lifting lemma is a particular case of Lemma 1.3 
in \cite{popa1987commutant}, but we give here a proof for the reader’s convenience. We thank Adrian Ioana for helpful discussions around the proof.

\begin{lem}\label{haar lifting OP}
    Let $(M,\tau)$ be a diffuse tracial von Neumann algebra and let $D\subset \mathcal{H}(M)$ be an SOT-dense subset. Let $\cU$ be a free ultrafilter on $\N.$ Then for all $u\in \mathcal{H}(M^\cU)$, there are $u_n\in D$ such that $u = (u_n)_n.$
\end{lem}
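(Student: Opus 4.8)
The plan is to lift an arbitrary Haar unitary $u=(x_n)_n\in\mathcal H(M^{\mathcal U})$ first to a sequence of unitaries, then to perturb each $x_n$ within $D$ while controlling the $\|\cdot\|_2$-error along $\mathcal U$. First I would fix a representing sequence $(y_n)_n$ for $u$ with each $y_n\in M$; since $u$ is unitary, a standard functional-calculus/polar-decomposition argument lets me assume $y_n\in\mathcal U(M)$ (replace $y_n$ by the unitary part of its polar decomposition on the set where $\|y_n^*y_n-1\|_2$ is small, which is a $\mathcal U$-large set, and by anything unitary elsewhere). So WLOG $u=(y_n)_n$ with $y_n\in\mathcal U(M)$.

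The key point is that the Haar condition $\tau(u^k)=0$ for all $k\in\N$ passes to the representing sequence only asymptotically: $\lim_{n\to\mathcal U}\tau(y_n^k)=0$ for each fixed $k$. I would then invoke a Haar-unitary ``absorption'' or approximation fact in the diffuse algebra $M$: given any unitary $w\in\mathcal U(M)$ and any $\eps>0$ and any finite moment tolerance, one can find a Haar unitary $w'$ with $\|w-w'\|_2<\eps$ — this uses that $M$ is diffuse, so $W^*(w)$ embeds in a diffuse abelian (indeed copy of $L(\Z)^{\,\oplus?}$) algebra, and within a diffuse abelian algebra one can approximate any unitary by a Haar unitary in $\|\cdot\|_2$ by spreading its spectrum; alternatively use that a unitary close to having vanishing moments up to order $k$ is $\|\cdot\|_2$-close to a genuine Haar unitary. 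Actually the cleanest route: since $u$ is already Haar, its moments vanish exactly, so for each $k$ the set $A_k=\{n:|\tau(y_n^k)|<1/k\}\in\mathcal U$; on a diagonal-type subset I can arrange $y_n$ to have small low-order moments, and then replace $y_n$ by a nearby honest Haar unitary $\tilde y_n\in\mathcal H(M)$ with $\|y_n-\tilde y_n\|_2\to_{\mathcal U}0$, using diffuseness of $M$ to correct finitely many moments at a time with vanishing cost. This gives $u=(\tilde y_n)_n$ with $\tilde y_n\in\mathcal H(M)$.

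Finally, since $D$ is SOT-dense in $\mathcal H(M)$ and $\mathcal H(M)$ carries the (metrizable, on bounded sets) $\|\cdot\|_2$-topology, for each $n$ I can pick $u_n\in D$ with $\|\tilde y_n-u_n\|_2<1/n$; then $\|u_n-\tilde y_n\|_2\to0$ unconditionally, hence certainly along $\mathcal U$, so $(u_n)_n$ also represents $u$, and $u_n\in D$ as required. I expect the main obstacle to be the middle step: turning the sequence of arbitrary unitaries $y_n$ into a sequence of genuine Haar unitaries in $M$ without losing the limit, i.e., making the ``approximate vanishing of all moments'' into ``exact vanishing of all moments for each $n$'' at asymptotically zero $\|\cdot\|_2$-cost. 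The device is a quantitative lemma of the form: for every $\eps>0$ and $K\in\N$ there is $\delta>0$ so that if $w\in\mathcal U(M)$ with $M$ diffuse satisfies $|\tau(w^k)|<\delta$ for $1\le k\le K$, then there is $w'\in\mathcal H(M)$ with $\|w-w'\|_2<\eps$ — proved by working inside a diffuse abelian subalgebra containing $w$ and perturbing the spectral measure, or by citing the relevant case of Popa's Lemma~1.3 in \cite{popa1987commutant} directly. Once that is in hand the rest is bookkeeping with the ultrafilter.
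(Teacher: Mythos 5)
Your plan follows the same route as the paper: lift $u$ to unitaries, observe that the moments vanish asymptotically along $\mathcal U$, invoke a quantitative ``small moments up to a high order implies $\|\cdot\|_2$-close to a genuine Haar unitary'' lemma, diagonalize over the ultrafilter, and finish with SOT-density of $D$ (which on the unitary group coincides with $\|\cdot\|_2$-density). The reduction and the ultrafilter bookkeeping are fine, but be aware that the quantitative lemma you black-box is not an auxiliary step --- it is the entire content of the paper's proof, carried out via spectral measures: one pigeonholes a rotation $\alpha$ so that the spectral measure of $u_n$ is small on the arcs where a continuous (hence polynomial, hence moment-controlled) approximation to $\chi_E$ fails, deduces that all $N$ rotated arcs carry spectral mass close to $1/N$, and then uses diffuseness to replace the resulting projections by exactly equal-trace ones generating a Haar unitary. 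Also, your statement of that lemma has the quantifiers slightly wrong: it is false for a fixed order $K$ (e.g.\ a symmetry $w=2p-1$ with $\tau(p)=1/2$ has $\tau(w)=0$ yet is uniformly far from every Haar unitary because $\tau(w^2)=1$); the correct form is ``for every $\varepsilon>0$ there exist $K$ and $\delta$,'' which is what your diagonalization actually uses. Deferring the lemma to Lemma~1.3 of \cite{popa1987commutant} is legitimate --- the paper itself notes this is a special case of Popa's result --- so modulo that citation and the quantifier fix, your argument is complete.
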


\vspace{-10pt}

\begin{proof}
    It suffices to show that if $u\in \cH(M^\cU)$ can be written as $u = (u_n)_n$, then for all $\ee>0$ there is $A\in\cU$ such that for all $n\in A,$ $\|u_n - v\|_2 < \ee $ for some $v\in \cH(M).$

    Fix $\ee>0$. Pick a natural number $N$ such that $\frac{4\pi+\sqrt{6}}{N} < \ee.$

    Set $E = \{e^{2\pi i \theta} : 0\leq \theta < 1/N\}$ as in the left image of Figure \ref{fig:enter-label}. Now, there is a continuous functions $f : \mathbb T \to [0,1]$ such that the set $\{x : f(x) \neq \chi_E(x)\}$ is contained in the disjoint union of two arcs of $\mathbb T,$ each of which has Lebesgue measure $\frac{1}{N^5}$, each of which are centered at the endpoints of $E$, as in the middle image of Figure \ref{fig:enter-label}. Define $S = \{e^{2\pi i\theta } : -\frac{1}{2N^5} < \theta < \frac{1}{2N^5}\}$. Then $\{x : f(x) \neq \chi_E(x)\} = S \sqcup e^{2\pi i \frac{1}{N}} S$. Define $T = \sqcup_{k=1}^N e^{2\pi i \frac{k}{N}}S$ as in the right image of Figure \ref{fig:enter-label}. Define $T_k = e^{2\pi i \frac{k}{N^4}} T.$ Then $\sqcup_{k=1}^{N^4} T_k \subset \mathbb T$.

    \begin{figure}
        \centering
        \includegraphics[width=6.5in,trim={0 2.5in 0 1.8in},clip]{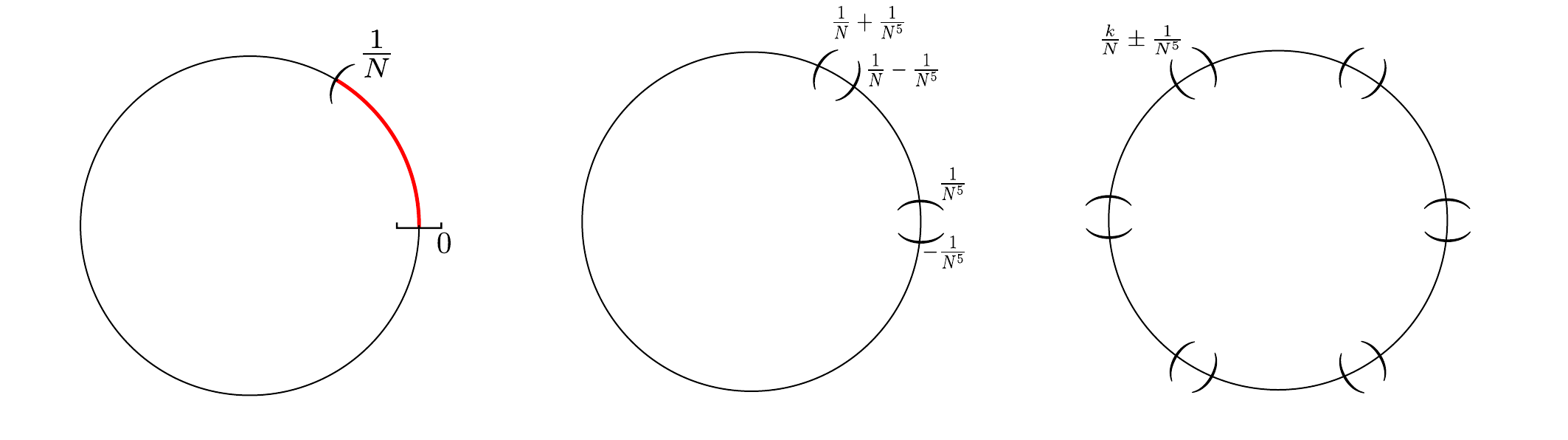}
        \caption{The sets $E,$ $\{x : f(x) \neq \chi_E(x)\}$, and $T$ respectively.}
        \label{fig:enter-label}
    \end{figure}

    By the Stone-Weierstrass Theorem, there is a polynomial $p$ (in $x$ and $\overline{x}$) such that $p:\mathbb T \to \mathbb R$ and $\|p-f\|_\infty < \frac{1}{N^4}$. Let us write $p(x) = \sum_{j=-M}^M a_jx^j$. Set $c = \sum_{j=-M}^M |a_j|$. By the ultrapower construction, there is a set $A\in\cU$ such that for all $n\in A,$ $|\tau(u_n^m)| < \frac{1}{cN^4}$ for all $0<|m| \leq M$.

    So fix $n\in A$. Recall that we have a unitary $u_n$ coming from the lift of $u$; $u_n$ gives us a spectral measure $\mu$ on $\mathbb T$. There must be some $1\leq k\leq N^4$ such that $\mu(e^{2\pi i\frac{k}{N^4}} T) \leq \frac{1}{N^4}$. Set $\alpha = e^{2\pi i\frac{k}{N^4}}.$ Define $\alpha_j = e^{2\pi i \frac{j}{N}}\alpha$ for all $1\leq j\leq N$. Note that $\alpha_j T =\alpha T$ for all such $j.$

    We now compute that for all $1\leq j\leq N,$
    \begin{align*}
        \tau(\chi_E(\alpha_j^{-1}u_n) - f(\alpha_j^{-1}u_n)) &= \int_\mathbb T \chi_E(\alpha_j^{-1}x) - f(\alpha_j^{-1}x) d\mu(x) \\
        &\leq \mu(\{x : f(\alpha_j^{-1}x) \neq \chi_E(\alpha_j^{-1}x)\})\\
        &\leq \mu(\alpha_j T)\\
        &= \mu(\alpha T)\\
        &\leq \frac{1}{N^4}.
    \end{align*}

    We also have that $|\tau(f(\alpha_j^{-1}u_n) - p(\alpha_j^{-1}u_n))| \leq \|f-p\|_\infty < \frac{1}{N^4}$.

    Hence $|\tau(p(\alpha_j^{-1}u_n)) - \tau(\chi_E(\alpha_j^{-1}u_n))| < \frac{2}{N^4}. $
    
    By our choice of $A\in\cU$, we also have that $|\tau(p(u_n)) - p(0)| < \frac{1}{N^4}.$ Therefore we also have that $|\tau(p(\alpha_j^{-1}u_n)) - p(0)| < \frac{1}{N^4}$ for all $1\leq j\leq N$.

    Hence $|p(0) - \tau(\chi_E(\alpha_j^{-1}u_n))| < \frac{3}{N^4}$ for all $j$. Now note that $\chi_E(\alpha_j^{-1}u_n) = \chi_{\alpha_j E}(u_n)$ and that the disjoint union of the $\alpha_j E$ is $\mathbb T.$ Therefore $\sum_{j=1}^N \tau(\chi_{\alpha_j E}(u_n)) = 1$. Since each of the summands is at most $\frac{6}{N^4}$ away from each other summand, we have that $|\tau(\chi_{\alpha_j E}(u_n)) -\frac{1}{N}| < \frac{6}{N^4} $ for all $j.$

    Set $p_j = \chi_{\alpha_j E}(u_n)$. This is a projection in $M$. We set $w = \sum_{j=1}^N \alpha_j p_j$. Then $\|u_n - w\|_2 \leq \|u_n - w\|_\infty < \frac{3 \pi}{N}$. (We use the estimate $|e^{2\pi i \frac{1}{N}} - 1| < \frac{2\pi}{N}$, which holds for all $N\in \N$.)

    Since $M$ is diffuse, there are projections $q_j \in M$ such that the $q_j$ are mutually orthogonal, commute with the $p_j$, and for all $j,$ $\tau(q_j) = \frac{1}{N}$. We may also arrange for $|\tau(q_j-p_j)| < \frac{6}{N^4}$ for all $j$ and $q_j - p_j$ is either a projection or a projection times -1.

    Now set $w' = \sum_j \alpha_j q_j.$ We see that $\|w - w'\|_2 \leq \sum_{j=1}^N \|p_j-q_j\|_2 \leq N \sqrt{\frac{6}{N^4}} = \frac{\sqrt{6}}{N}$.

    Lastly, we need to show that the unitary $w'$ is close to a Haar unitary. Identify each $q_j$ with the projection onto the set $\alpha_j E.$ (This is an arc of length $1/N$ in $\mathbb T$ starting at $\alpha_j$). Under this identification, the unitary $v$ defined by $v(x) = x$ is a Haar unitary, and $\|v - w'\|_2 < \|v-w'\|_\infty < \frac{2\pi}{N}.$

    In total, we have that for all $\ee>0$, there is $A\in \cU$ such that for all $n\in A,$ there is a Haar unitary $v$ such that  $\|u_n - v\|_2 < \frac{2\pi}{N} + \frac{\sqrt{6}}{N} + \frac{2\pi}{N} < \ee. $
    
\end{proof}
We thank Jesse Peterson for suggesting the following second short proof in the factor case: 

\begin{proof}[Alternative proof of Lemma \ref{haar lifting OP} in the factor case]
    Suppose $M$ is a II$_1$ factor. Let $u\in \cH(M^\cU)$. Pick any $v\in \cH(M).$ By Proposition \ref{jung-atkinson} there is $w \in \cU(M^\cU)$ such that $v = wuw^*$. Lift $w$ to a sequence of unitaries $w_n \in \cU(M).$ Then, setting $u_n = w_n^*wuw^*w_n,$ we see that $u_n\in\cH(M)$ and $u = (u_n)_n \in M^\cU.$

\end{proof}

\begin{lem}
\label{lem-diff-lift}
    Let $(M,\tau)$ be a diffuse tracial von Neumann algebra. Say $x\in M$ is diffuse if $W^*(x)$ is diffuse. Then
    \begin{enumerate}
        \item The diffuse self-adjoints of $M$ are SOT-dense in $M_{sa}$.
        \item The diffuse unitaries in $M$ are SOT-dense in $\cU(M).$
    \end{enumerate}
\end{lem}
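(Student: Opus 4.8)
The plan is to prove both density statements by the same strategy: approximate an arbitrary element in $2$-norm by one that generates a diffuse algebra, exploiting the functional calculus and the fact that $M$ itself is diffuse, hence contains, inside any maximal abelian subalgebra, a copy of $L^\infty[0,1]$ worth of room to ``spread out'' spectral mass.

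For part (1), I would start with a self-adjoint $a\in M_{sa}$ and fix $\varepsilon>0$. Pick a maximal abelian von Neumann subalgebra $A$ of $M$ containing $a$; since $M$ is diffuse, $A$ is diffuse, so $A\cong L^\infty(X,\mu)$ for a diffuse probability space. Now $a$ corresponds to a bounded real measurable function $g$ on $X$. Partition $\R$ (or the interval containing the spectrum of $a$) into finitely many intervals $I_1,\dots,I_k$ of length less than $\varepsilon$, and let $p_j = \chi_{I_j}(a)$, so $\sum_j p_j = 1$ and $\|a - \sum_j c_j p_j\|_\infty < \varepsilon$ where $c_j$ is the midpoint of $I_j$. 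Since $A$ is diffuse, each nonzero $p_j$ (as a set of positive measure in $X$) supports a diffuse self-adjoint element $b_j\in p_j A p_j$ with spectrum contained in a tiny interval around $c_j$, say $\|b_j - c_j p_j\|_\infty < \varepsilon/k$. Set $b = \sum_j b_j$; then $b$ is self-adjoint, $\|a-b\|_2 \le \|a-b\|_\infty < 2\varepsilon$, and $b$ is diffuse because $W^*(b)$ contains $W^*(b_1) \ge$ a diffuse algebra on the set $p_1$ (indeed $b$'s spectral measure has a diffuse part on the first interval). This gives SOT-density of diffuse self-adjoints.

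For part (2), I would deduce the unitary statement from (1) via the Cayley transform, or more directly: given $w\in\cU(M)$ and $\varepsilon>0$, write $w = e^{ih}$ with $h\in M_{sa}$, $\|h\|_\infty \le \pi$ (functional calculus on the unitary, choosing a Borel branch of the logarithm). By part (1) choose a diffuse self-adjoint $h'$ with $\|h-h'\|_2$ small and $\|h'\|_\infty$ controlled; then $w' = e^{ih'}$ is a unitary with $\|w-w'\|_2 \le \|h-h'\|_2$ (since $t\mapsto e^{it}$ is $1$-Lipschitz), and one checks $w'$ is diffuse: because $h'$ is diffuse, its spectral measure has no atoms except possibly where $e^{i(\cdot)}$ is non-injective, which one avoids by perturbing $h'$ slightly so its spectrum avoids the problematic endpoints, or by noting $W^*(w') \supseteq W^*(e^{ih'}\!\restriction_{\text{a diffuse piece}})$ is diffuse. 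Alternatively, run the same interval-partition argument directly on $\T$ as in the proof of Lemma~\ref{haar lifting OP}: cut the circle into small arcs, replace $w$ by $\sum_j \lambda_j p_j$ with $p_j$ the spectral projections, and then fatten each $p_j$ inside the diffuse algebra $A$ to a diffuse unitary supported near $\lambda_j$.

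The main obstacle is the bookkeeping in verifying that the perturbed element is genuinely diffuse rather than merely having large support — one must ensure that at least one spectral ``lump'' is replaced by something with no atoms, and in the unitary case that the exponential map does not collapse the diffuseness (which forces a little care about the branch cut of the logarithm and the endpoints of the arcs). Everything else is routine functional-calculus estimation, and the only structural input used is that a diffuse abelian von Neumann algebra contains diffuse elements supported on any projection of positive trace, together with the $1$-Lipschitz bounds for the relevant functional calculi.
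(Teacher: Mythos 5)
Your argument is correct in substance, but for part (1) it takes a genuinely different route from the paper. The paper reduces to $M=L^\infty([0,1])$ and uses a generic linear perturbation: for all but countably many $\delta$ the element $f(x)+\delta x$ is diffuse (if every $\delta$ produced an atom, two of the uncountably many positive-measure level sets $A_\delta$ would overlap in positive measure, forcing $x$ to be constant there). Your approach instead discretizes the spectrum into projections $p_j$ and ``fattens'' each $c_jp_j$ into a diffuse element $b_j\in p_jAp_j$ with small spectrum. Both are elementary and both work; the paper's perturbation trick is shorter and produces an approximant in $W^*(x,y)$ for a single auxiliary $y$, while yours is more hands-on and makes the mechanism (spreading spectral mass) visible. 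For part (2) you and the paper do the same thing ($u=e^{ia}$, apply (1)); your extra worry about the exponential collapsing diffuseness is unnecessary, since the fibers of $t\mapsto e^{it}$ on $[-\pi,\pi]$ are finite, so the pushforward of an atomless spectral distribution is atomless and $e^{ih'}$ is automatically diffuse.

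One justification in part (1) needs repair, though the construction itself is fine. You assert that $b=\sum_j b_j$ is diffuse ``because $W^*(b)$ contains $W^*(b_1)$,'' and parenthetically that it suffices for the spectral measure of $b$ to have a diffuse part. Neither is right as stated: the unital algebra $W^*(b_1)$ generated by $b_1$ alone is \emph{not} diffuse (its spectral distribution has an atom at $0$ of mass $1-\tau(p_1)$), and an element whose distribution merely has a nontrivial continuous part can still generate a non-diffuse algebra if the distribution also has atoms. The correct (and immediate) reason your $b$ is diffuse is that its distribution is the sum of the distributions of the $b_j$ on their disjoint supports, each of which you arranged to be atomless, so the total distribution is atomless and $W^*(b)\cong L^\infty(\operatorname{sp}(b),\mu_b)$ has no minimal projections. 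With that one-sentence fix the proof is complete.
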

    
\begin{proof}
    For (1), it suffices to consider the case $M = L^\infty([0,1])$ as $W^*(x)$ will be contained in a diffuse abelian subalgebra. So let $f = f^* \in L^\infty([0,1])$, and fix $\ee > 0.$ 

    Claim: there exists $0\leq \delta < \ee$ such that $f(x) + \delta x$ is a diffuse element of $L^\infty([0,1])$.

    Proof of claim: Suppose not. Then for all $0\leq \delta <\ee$ there would be an atom in the spectrum of $f(x) + \delta x$. Say this atom is at $t_\delta$. Then for each $\delta,$ we have that the set $A_\delta = \{x: f(x)+\delta x = t_\delta\}$ has positive measure. Since there are uncountably many sets $A_\delta$, there must exist $\delta_1\neq \delta_2$ such that $B = A_{\delta_1}\cap A_{\delta_2}$ has positive measure. Then for $x\in B$, we have that $f(x) + \delta_1x = t_{\delta_1}$ and $f(x) + \delta_2x = t_{\delta_2}$. This implies that $(\delta_1-\delta_2)x$ is a constant function on $B,$ which is not possible. This ends the proof of the claim.

    To finish the proof of (1), we see that $\|f - (f+\delta x)\|_2 = \delta \|x\|_2 = \frac{\delta}{\sqrt{3}} < \ee$.

    To prove (2), observe that if $u$ is a unitary then $u = e^{ia}$ for some self-adjoint $a = a^*$ and apply (1).
\end{proof}

\subsection{1-bounded entropy}\label{1bdd entropy facts}

We recall some notation and properties of the Jung-Hayes 1-bounded entropy (\cite{JungSB}, \cite{Hayes2018}). For a diffuse tracial von Neumann algebra $(M,\tau)$ and $X\in M_{\text{sa}}^d$, the {\it law} of $X$ is the linear functional $\ell_X:\mathbb{C}\langle t_1,\dots,t_d\rangle \to \mathbb{C}$ given by $\ell_X (f) = \tau(f(X))$.
Let $\Sigma_{d,R}$ be the set of all linear maps $\ell: \mathbb{C}\langle t_1,\dots,t_d\rangle \to \mathbb{C}$ satisfying that there exists a tracial von Neumann algebra $(M,\tau)$ and $X \in M_{\text{sa}}^d$ such that $\ell=\ell_X$ and $\|x\| \leq R$ for all $x\in X$. We equip $\Sigma_{d,R}$ with the weak$^*$ topology.

Let  $X, Y\subset M_{\text{sa}}$ finite such that $\|x\| \leq R$ for all $x\in X\cup Y$.  Following \cite{VoiculescuFreeEntropy2}, for each weak$^*$ neighborhood $\mathcal{O}$ of $\ell_{X\sqcup Y}$ in $\Sigma_{d,R}$ and $n \in \mathbb{N}$, one defines
\[
\Gamma_R^{(n)}(X:Y; \mathcal{O}) =  \{A \in \mathbb{M}_n(\mathbb{C})_{\text{sa}}^{X}: \exists B\in \mathbb{M}_n(\mathbb{C})_{\text{sa}}^{Y} \text{ such that }  \ell_{A\sqcup B} \in \mathcal{O}, \|A_x\|,\|B_y\|\leq R, \forall x\in X,y\in Y \}.
\]

The $1$-bounded entropy of $N$ in the presence of $M$, denoted $h(N: M)$, where $N\subset M$ is a diffuse von Neumann subalgebra, is defined as a limiting quantity in terms of the exponential growth rate of covering numbers of $\Gamma_R^{(n)}(\mathcal{X,Y, \mathcal{O}})$ up to unitary conjugation as $n \to \infty$ for neighborhoods $\mathcal{O}$ of $\ell_x$ and $X\subset N$, 
 $Y\subset M$ finite. We write $h(N)$ in place of $h(N:N)$ when appropriate. We do not need the precise formula as we only need to consider this axiomatically for the purposes of our paper. The main properties we will need are below:   

  \begin{fact}(see  \cite[2.3.3]{HJKE1})\label{fact 1}
 $h(N_{1}:M_{1})\leq h(N_{2}:M_{2})$ if $N_{1}\subset N_{2}\subset M_{2}\subset M_{1}$ and $N_{1}$ is diffuse.
 \end{fact}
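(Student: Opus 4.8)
The plan is to split the desired inequality into two, routing it through the intermediate quantity $h(N_1:M_2)$, namely
\[ h(N_1:M_1)\ \le\ h(N_1:M_2)\ \le\ h(N_2:M_2), \]
and to derive each half directly from the definition of $1$-bounded entropy as a supremum over finite test tuples of an infimum over finite ``presence'' tuples and neighborhoods. Since $N_1$ is diffuse and $N_1\subset N_2\subset M_2\subset M_1$, all four algebras are diffuse, so every $h(\cdot:\cdot)$ above is defined; if the left-hand side equals $-\infty$ there is nothing to prove, so we may assume it does not. The observation to isolate first is that, for finite tuples $X,Y$ of self-adjoints of norm at most $R$, the microstate set $\Gamma_R^{(n)}(X:Y;\mathcal O)$ --- and hence the orbital covering number $K_\varepsilon$ of it that appears in the definition of $h$ --- is intrinsic to the data $X$, $Y$, $\mathcal O$, $R$, $n$, making no reference to an ambient von Neumann algebra. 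Writing $h(N:M)=\sup_{X\subset N}F_M(X)$, where $F_M(X)$ is the infimum over finite $Y\subset M$ and over weak$^*$ neighborhoods $\mathcal O$ of $\ell_{X\sqcup Y}$ of the usual $\sup_{\varepsilon}\limsup_n$ quantity, this observation says precisely that $F_M(X)$ depends only on the tuple $X$ and on the algebra $M$ (through the index set of its finite subtuples).

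For the first inequality I would fix a finite $X\subset (N_1)_{\mathrm{sa}}$ and note that any finite $Y\subset (M_2)_{\mathrm{sa}}$ is also a finite subset of $(M_1)_{\mathrm{sa}}$, while by the observation above the inner quantity attached to $Y$ is the same whether computed inside $M_1$ or inside $M_2$; hence $F_{M_1}(X)$ is an infimum over a larger index set than $F_{M_2}(X)$, so $F_{M_1}(X)\le F_{M_2}(X)$, and taking $\sup$ over finite $X\subset N_1$ gives $h(N_1:M_1)\le h(N_1:M_2)$. For the second inequality I would hold $M_2$ fixed as the presence algebra, so that $F_{M_2}$ is a single function of finite tuples, and observe that every finite $X\subset (N_1)_{\mathrm{sa}}$ is also a finite subset of $(N_2)_{\mathrm{sa}}$; therefore $\sup_{X\subset N_1}F_{M_2}(X)\le\sup_{X\subset N_2}F_{M_2}(X)$, i.e.\ $h(N_1:M_2)\le h(N_2:M_2)$. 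Chaining the two inequalities gives the Fact.

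In this proof there is no genuine obstacle: all the content is the monotonicity of suprema and infima over nested index sets, once one grants that microstate spaces are attached to tuples and laws rather than to algebras. The only point deserving a little care is the compatibility of the weak$^*$ neighborhoods $\mathcal O$ of $\ell_{X\sqcup Y}$ as $Y$ grows --- to see that passing from $\Gamma_R^{(n)}(X:Y';\mathcal O')$ down to $\Gamma_R^{(n)}(X:Y;\mathcal O)$ is well-behaved one fixes $\mathcal O$ first and pulls it back along the coordinate projection $\Sigma_{|X|+|Y'|,R}\to\Sigma_{|X|+|Y|,R}$, so that restricting a microstate of $X\sqcup Y'$ to its $X\sqcup Y$ coordinates lands in $\mathcal O$; this is routine cofinality bookkeeping, and it is essentially the argument underlying \cite[2.3.3]{HJKE1}.
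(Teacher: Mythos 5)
Your argument is correct, and it is worth noting that the paper itself offers no proof of this Fact: it is quoted as a black box from \cite[2.3.3]{HJKE1} (the paper explicitly treats the $1$-bounded entropy axiomatically), so there is no in-paper argument to compare against. What you have written is the standard proof from the definitions, and the two halves are organized the right way: enlarging the presence algebra from $M_2$ to $M_1$ enlarges the index set of the infimum over finite tuples $Y$, hence can only decrease the value, while enlarging the inner algebra from $N_1$ to $N_2$ enlarges the index set of the supremum over finite tuples $X$, hence can only increase it. The one point that carries actual content --- that $\Gamma_R^{(n)}(X:Y;\mathcal O)$ and its orbital covering numbers depend only on the law $\ell_{X\sqcup Y}$ (hence only on $W^*(X\cup Y)$ with its trace) and not on the ambient algebra --- you isolate correctly, and your remark about pulling neighborhoods back along the coordinate projections is exactly the cofinality bookkeeping needed to see that the infimum over $Y$ is taken over a directed family along which the quantity is monotone. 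Two minor caveats: the precise nesting of $\sup_{\varepsilon}$, $\inf_{\mathcal O}$, and $\limsup_n$ varies between references and should be fixed to match \cite{Hayes2018} if this were written out in full, and the diffuseness hypothesis on $N_1$ is what guarantees all four quantities are defined (as you note). Neither affects the validity of the argument.
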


  \begin{fact}(see \cite[Proposition 4.5]{Hayes2018})\label{in the presence of the ultra}
 $h(N:M)=h(N:M^{\mathcal U})$ if $N\subset M$ is diffuse, and $\mathcal U$ is an ultrafilter on a set $I$. (Note that \cite[Proposition 4.5]{Hayes2018} asserts this fact for free ultrafilters $\mathcal U$. The fact is trivially true also for non-free (i.e., principal) ultrafilters.) \end{fact}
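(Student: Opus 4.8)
The plan is to prove the two inequalities $h(N:M^{\cU})\le h(N:M)$ and $h(N:M)\le h(N:M^{\cU})$ separately; only the second requires any real work. The first is immediate from monotonicity of the $1$-bounded entropy in the presence: applying Fact \ref{fact 1} to the chain $N\subset N\subset M\subset M^{\cU}$, with the diffuse algebra $N$ in the innermost slot, gives exactly $h(N:M^{\cU})\le h(N:M)$. Conceptually, $M^{\cU}$ supplies more self-adjoint tuples in whose presence the microstates of $N$ must be realized, and extra presence constraints can only shrink the microstate spaces and hence their growth rate.

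For the reverse inequality the key observation I would exploit is that the microstate space $\Gamma_R^{(n)}(X:Y;\cO)$ — and therefore every covering number, orbit-covering number, and limiting quantity entering the definition of $h$ — depends on the presence data $(Y,\cO)$ only through the cardinality $|Y|$ and the weak$^*$-open set $\cO\subset\Sigma_{|X|+|Y|,R}$ itself, not on the specific elements of $Y$ nor even on the law $\ell_{X\sqcup Y}$. Hence it is enough to prove: given a finite $X\subset N_{sa}$ (viewed diagonally in $M^{\cU}$), a finite $Y=(y_1,\dots,y_m)\subset (M^{\cU})_{sa}$ with $\|y_j\|\le R$, and a weak$^*$ neighborhood $\cO$ of $\ell_{X\sqcup Y}$, there is a finite $Y'\subset M_{sa}$ with $|Y'|=m$ for which $\cO$ is still a neighborhood of $\ell_{X\sqcup Y'}$. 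Granting this, $\Gamma_R^{(n)}(X:Y;\cO)=\Gamma_R^{(n)}(X:Y';\cO)$ for every $n$, so the $M$-presence pair $(Y',\cO)$ contributes to $h(N:M)$ exactly what the $M^{\cU}$-presence pair $(Y,\cO)$ contributes to $h(N:M^{\cU})$; taking infima over presence data and the supremum over finite $X\subset N_{sa}$ then gives $h(N:M)\le h(N:M^{\cU})$.

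To construct $Y'$ I would lift each $y_j$ to a bounded sequence $(y_{j,k})_{k\in I}$ in $M$ and, after replacing each representative by its self-adjoint part and truncating its norm via continuous functional calculus (neither operation changes the class modulo the trace-null ideal), arrange $y_{j,k}\in M_{sa}$ with $\|y_{j,k}\|\le R$ for all $k$. By the defining property of the ultraproduct trace, $\tau_{M^{\cU}}(f(X,y_1,\dots,y_m))=\lim_{k\to\cU}\tau_M(f(X,y_{1,k},\dots,y_{m,k}))$ for every $*$-polynomial $f$, that is $\ell_{X\sqcup Y}=\lim_{k\to\cU}\ell_{X\sqcup Y^{(k)}}$ in the weak$^*$ topology, where $Y^{(k)}:=(y_{1,k},\dots,y_{m,k})\subset M_{sa}$. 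Since a basic weak$^*$ neighborhood in $\Sigma_{|X|+m,R}$ is cut out by finitely many moment inequalities, the set $\{k\in I:\ell_{X\sqcup Y^{(k)}}\in\cO\}$ lies in $\cU$ and in particular is nonempty, so any $Y':=Y^{(k)}$ with $k$ in this set does the job. (If $\cU$ is principal, concentrated at $i_0\in I$, then $x\mapsto x_{i_0}$ is a trace-preserving isomorphism $M^{\cU}\cong M$ carrying $N$ onto $N$, and the statement is trivial.)

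I do not anticipate a genuine obstacle. The crux is the exact set-equality $\Gamma_R^{(n)}(X:Y;\cO)=\Gamma_R^{(n)}(X:Y';\cO)$, which renders all the further layers in the precise definition of $h$ — the supremum over the norm bound $R$, the passage to covering numbers up to unitary conjugation, the $\limsup$ in $n$, and the limit as $\cO$ shrinks to $\{\ell_{X\sqcup Y}\}$ — irrelevant to the argument. The only mildly delicate points are the standard facts that elements of $(M^{\cU})_{sa}$ admit uniformly norm-bounded self-adjoint representatives and that weak$^*$ neighborhoods in $\Sigma_{d,R}$ are determined by finitely many moments, both routine.
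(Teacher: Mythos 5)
The paper does not actually prove this statement: it is quoted as a black-box fact from \cite[Proposition 4.5]{Hayes2018}, with only the parenthetical remark (which you also make) that the principal-ultrafilter case is trivial because then $M^{\mathcal U}\cong M$. So there is no in-paper argument to compare against; what can be said is that your proof is correct and is essentially the standard proof of that proposition. The inequality $h(N:M^{\mathcal U})\le h(N:M)$ is exactly monotonicity (Fact \ref{fact 1}) applied to $N\subset N\subset M\subset M^{\mathcal U}$, and your key observation for the reverse inequality — that $\Gamma_R^{(n)}(X:Y;\mathcal O)$ depends on the presence data only through $|Y|$ and $\mathcal O$, so that lifting $Y$ coordinatewise to self-adjoint, norm-truncated representatives and selecting an index $k$ (from the $\mathcal U$-large set where the finitely many moments defining a basic subneighborhood land in $\mathcal O$) yields an $M$-presence pair with \emph{literally the same} microstate spaces — is the right mechanism and makes all the subsequent layers of the definition (covering numbers up to conjugation, $\limsup$ in $n$, infima over $\mathcal O$ and $Y$, supremum over $X$) pass through unchanged. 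The two points you flag as delicate, namely uniformly norm-bounded self-adjoint lifts and the finite-moment description of basic weak$^*$ neighborhoods of a law, are both standard and you invoke them correctly.
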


 \begin{fact}\label{unions}(see \cite[Lemma A.10]{Hayes2018})
  Assume that $(N_\alpha)_\alpha$ is an increasing chain of diffuse von Neumann subalgebras of $M$. Then $h(\bigvee_\alpha N_\alpha:M)=\sup_\alpha h(N_\alpha:M)$.
 \end{fact}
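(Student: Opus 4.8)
The plan is to prove the two inequalities separately; one is soft and the other carries the content. For $h(\bigvee_\alpha N_\alpha : M) \ge \sup_\alpha h(N_\alpha : M)$ I would simply invoke Fact \ref{fact 1}: each $N_\alpha$ is diffuse and satisfies $N_\alpha \subseteq \bigvee_\beta N_\beta \subseteq M \subseteq M$, so $h(N_\alpha : M)\le h(\bigvee_\beta N_\beta:M)$ for every $\alpha$; now take the supremum over $\alpha$.

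For the reverse inequality I would unwind $h(\cdot:\cdot)$ as a double limit of covering numbers (up to unitary conjugation) of the microstate spaces $\Gamma_R^{(n)}(X:\mathcal F;\mathcal O)$, and reduce the problem to a single finite self-adjoint tuple: writing $h_M(X)=\inf_{\mathcal F}h(X:\mathcal F)$ for the ``contribution'' of a finite tuple $X=(x_1,\dots,x_d)\subseteq(\bigvee_\alpha N_\alpha)_{\text{sa}}$ (infimum over finite $\mathcal F\subseteq M_{\text{sa}}$), it is enough to show $h_M(X)\le L:=\sup_\alpha h(N_\alpha:M)$, since $h(\bigvee_\alpha N_\alpha:M)=\sup_X h_M(X)$. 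The engine is approximation. Because $\bigcup_\alpha N_\alpha$ is a weakly dense $*$-subalgebra of $\bigvee_\alpha N_\alpha$ and the chain is directed, Kaplansky density yields, for any prescribed $\delta>0$, a finite self-adjoint tuple $X'=(x_1',\dots,x_d')$ contained in a single $N_\alpha$, with $\|x_i'\|\le\|x_i\|$ and $\|x_i-x_i'\|_2<\delta$ for all $i$.

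I would then run the standard microstate-perturbation estimate, with one twist: feed $X'$ into the ``presence'' set. Then any $A\in\Gamma_R^{(n)}(X:X'\cup\mathcal F;\mathcal O)$ comes, via the $X'$-part $A'$ of its extending microstate, equipped with a point $A'\in\Gamma_R^{(n)}(X':\mathcal F;\mathcal O')$ --- where $\mathcal O'$ is the image of $\mathcal O$ under restricting laws to the $X'\sqcup\mathcal F$ coordinates --- and $\|A-A'\|_2$ is controlled by $\delta$, simply because the moments $\tau((x_i-x_i')^2)=\|x_i-x_i'\|_2^2$ are small and the corresponding functionals on law-space are weak-$*$ continuous, so a sufficiently fine $\mathcal O$ forces the matrix analogue to be small. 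Consequently a cover of $\Gamma_R^{(n)}(X':\mathcal F;\mathcal O')$ at scale $\epsilon/2$ is a cover of $\Gamma_R^{(n)}(X:X'\cup\mathcal F;\mathcal O)$ at scale $\epsilon$, provided $\delta$ was chosen small relative to $\epsilon$ and $d$. Pushing this through $\tfrac1{n^2}\log(\cdot)$, $\limsup_n$, $\inf_{\mathcal O}$ (every neighborhood of $\ell_{X'\sqcup\mathcal F}$ arises as such an $\mathcal O'$, so the two infima match), and $\inf_{\mathcal F}$ shows that the $\epsilon$-scale term of $h_M(X)$ is at most $h_M(X')\le h(N_\alpha:M)\le L$. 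Letting $\epsilon\to0$ (re-choosing $X'$ and $\alpha$ each time) gives $h_M(X)\le L$, and the supremum over finite $X$ finishes it.

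I expect the main obstacle to be the quantifier bookkeeping in the definition of $h$: the covering radius $\epsilon$ must be fixed \emph{before} the approximant $X'$ is selected, so that $X'$ can be taken close enough to $X$ (depending on $\epsilon$ and $d$) for the perturbation to cost only a factor of $2$ in scale --- reversing the order collapses the argument. The secondary subtlety is the presence-set trick: putting $X'$ next to $\mathcal F$ is exactly what makes a microstate for $X$ automatically hand over a nearby microstate for $X'$, rather than forcing one to manufacture such a microstate by hand.
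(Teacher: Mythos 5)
The paper does not prove this statement: it is quoted verbatim as a black-box fact from \cite[Lemma A.10]{Hayes2018}, so there is no internal proof to compare against. Your argument is essentially the standard proof of that cited lemma -- the soft inequality by monotonicity, and the hard one by reducing to finite tuples, approximating them in $\|\cdot\|_2$ by a tuple inside a single $N_\alpha$ (directedness of the chain plus Kaplansky), and using the presence-set trick so that microstates for $X$ hand over nearby microstates for $X'$ at a cost of a factor of $2$ in the covering scale, with the scale $\epsilon$ fixed before $X'$ is chosen. The outline is correct; the only point worth patching is that the monotonicity step $h(W^*(X'):M)\le h(N_\alpha:M)$ requires $W^*(X')$ to be diffuse, which you can arrange by adjoining to $X$ and to each approximant a fixed diffuse element of some $N_{\alpha_0}$.
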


 \begin{fact}\label{joins}(see \cite[Lemma A.12]{Hayes2018})
  $h(N_1\vee N_2:M)\leq h(N_1:M)+h(N_2:M)$ if $N_1,N_2\subset M$ and $N_1\cap N_2$ is diffuse.  
  In particular, $h(N_1\vee N_2)\leq h(N_1)+h(N_2)$.
 \end{fact}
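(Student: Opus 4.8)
The plan is to reduce the inequality to a counting estimate on matricial microstate spaces and then \emph{glue} microstates over the shared diffuse subalgebra. First, the ``in particular'' clause is a formal consequence of the main inequality: applying it with $M = N_1\vee N_2$ and then using the monotonicity Fact \ref{fact 1} gives
\[ h(N_1\vee N_2) = h(N_1\vee N_2 : N_1\vee N_2) \le h(N_1 : N_1\vee N_2) + h(N_2 : N_1\vee N_2) \le h(N_1) + h(N_2), \]
where the last step uses $h(N_i : N_1\vee N_2) \le h(N_i : N_i)$ (take $N_i\subset N_i\subset N_1\vee N_2\subset N_1\vee N_2$ in Fact \ref{fact 1}). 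So it suffices to prove $h(N_1\vee N_2 : M) \le h(N_1 : M) + h(N_2 : M)$.

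Unwinding the definition of $h(\,\cdot : M)$, I would fix finite self-adjoint generating tuples $X_1\subset N_1$, $X_2\subset N_2$ of uniformly bounded operator norm, and a finite $Y\subset M$. Since $N_1\cap N_2$ is diffuse, pick a Haar unitary $u\in N_1\cap N_2$ and adjoin $Z := \{\operatorname{Re}(u),\operatorname{Im}(u)\}$ (which generates $W^*(u)\cong L(\Z)$) to all tuples in sight. The goal becomes: for every weak$^*$ neighborhood $\mathcal{O}$ of $\ell_{X_1\sqcup X_2\sqcup Z\sqcup Y}$ one can find neighborhoods $\mathcal{O}_1,\mathcal{O}_2$ of the corresponding sub-laws so that the covering number (up to unitary conjugation, in $\|\cdot\|_2$) of $\Gamma_R^{(n)}(X_1\cup X_2\cup Z : Y;\mathcal{O})$ is bounded, up to a subexponential factor in $n^2$, by the product of the covering numbers of $\Gamma_R^{(n)}(X_1\cup Z : Y;\mathcal{O}_1)$ and $\Gamma_R^{(n)}(X_2\cup Z : Y;\mathcal{O}_2)$. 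Taking logarithms, dividing by $n^2$, and passing to $\limsup_n$, then $\inf$ over $\mathcal{O}$, then $\sup$ over $Y$ and the generating tuples, yields the claimed inequality.

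The engine for the covering bound is Jung's uniqueness theorem for microstates of amenable von Neumann algebras, the matricial avatar of Proposition \ref{jung-atkinson}: since $W^*(u)$ is hyperfinite, for every $\ee>0$ there is a neighborhood $\mathcal{O}_0$ of $\ell_u$ and a fixed matricial microstate $z^{(n)}$ such that every element of $\Gamma^{(n)}(Z;\mathcal{O}_0)$ is $\ee$-close in $\|\cdot\|_2$ to a unitary conjugate of $z^{(n)}$. Thus, after a global conjugation, I may assume every microstate in $\Gamma_R^{(n)}(X_1\cup X_2\cup Z : Y;\mathcal{O})$ has its $Z$-block $\ee$-close to $z^{(n)}$; likewise I may take near-optimal coverings of $\Gamma_R^{(n)}(X_i\cup Z : Y;\mathcal{O}_i)$ by balls whose representatives also have $Z$-block near $z^{(n)}$. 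Restricting a big microstate to its $X_1\cup Z$ and $X_2\cup Z$ coordinates places it into these two coverings; since all the $Z$-blocks are pinned near $z^{(n)}$, the two chosen conjugations differ by a unitary that \emph{approximately commutes with} $z^{(n)}$, and one reassembles a covering of the large microstate space indexed by (pair of small balls) $\times$ (covering of the unitary group of the approximate relative commutant of $z^{(n)}$). Because $W^*(u)$ is amenable, $z^{(n)}$ has approximately homogeneous spectrum, so that last factor is subexponential in $n^2$, which is exactly the needed estimate.

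The step I expect to be the genuine obstacle is precisely this last bookkeeping: controlling the discrepancy between the conjugations coming from the two separate coverings by the unitaries (approximately) commuting with the pinned shared block $z^{(n)}$, and verifying that this ambiguity costs only $e^{o(n^2)}$. This is where the hypothesis that $N_1\cap N_2$ is \emph{diffuse} is used in its full strength (one really invokes that it contains a Haar unitary, whose microstate space is governed by the hyperfinite/amenable uniqueness theory), and it is the mechanism by which Jung's theorem enters. Modulo this standard but delicate estimate the remainder is routine manipulation of covering numbers and neighborhood bases, in line with the treatment of \cite[Lemma A.12]{Hayes2018}.
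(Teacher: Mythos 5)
The paper does not prove this statement; it is quoted as a black box with a citation to \cite[Lemma A.12]{Hayes2018}, and the surrounding text explicitly says the 1-bounded entropy is only used axiomatically. Your write-up is therefore best judged as a reconstruction of the cited proof, and as such the strategy you describe --- reduce to covering-number estimates for microstate spaces, pin the microstates of a Haar unitary in $N_1\cap N_2$ using Jung's uniqueness theorem for hyperfinite subalgebras, and glue the two coverings at the cost of the unitaries approximately commuting with the pinned block --- is the right one and matches the mechanism of Hayes' argument. Your derivation of the ``in particular'' clause is correct, except that the chain you quote for Fact \ref{fact 1} is the wrong one: to get $h(N_i:N_1\vee N_2)\leq h(N_i:N_i)$ you need the chain $N_i\subset N_i\subset N_i\subset N_1\vee N_2$ (the presence algebra must shrink, not the inner one grow); the chain $N_i\subset N_i\subset N_1\vee N_2\subset N_1\vee N_2$ only yields a tautology.

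The one substantive caveat is the step you yourself flag: you assert, but do not prove, that the ambiguity coming from unitaries that only \emph{approximately} commute with the pinned microstate $z^{(n)}$ costs $e^{o(n^2)}$ in covering number. This is genuinely the crux, and it is not automatic: approximate commutation with a matrix whose spectrum has spacing $\sim 1/n$ does not place a unitary near the exact commutant, so one cannot simply cover an $n$-torus. In \cite{Hayes2018} this is handled through the formalism of covering numbers relative to the orbit of the hyperfinite block (and the fact that $h$ of a diffuse hyperfinite algebra vanishes in the presence of anything), rather than by a direct spectral argument. As submitted, your text is a correct and well-aimed proof outline with the key estimate deferred to the cited source, which is consistent with how the paper itself uses the fact, but it is not a self-contained proof.
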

 
\begin{fact}\label{path fact 1bdd}
Assume that  $u_1,u_2\in \mathcal U(M)$ are such that there are Haar unitaries $v_1, \cdots, v_n\in \cH(M^{\mathcal{U}})$ satisfying $[u_1,v_1]= [v_i,v_{i+1}]=[v_n,u_2]=0$ for all $1\leq i\leq n-1$. Then $h(\{u_1,u_2\}'':M^{\mathcal{U}})\leq 0$.
\end{fact}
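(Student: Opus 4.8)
The plan is to sandwich $\{u_1,u_2\}''$ between two explicit von Neumann subalgebras of $M^{\mathcal U}$ and then feed this to the structural facts on $1$-bounded entropy recalled above. On one side, $\{u_1,u_2\}''$ sits inside the join of the abelian diffuse algebras that the commutation relations along the path carve out; on the other, $1$-bounded entropy is finitely subadditive over joins with diffuse intersection (Fact \ref{joins}) and vanishes on amenable diffuse algebras. So the whole argument is essentially a bookkeeping exercise around Facts \ref{fact 1} and \ref{joins}.

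Concretely, I would set $A_0 := W^*(u_1,v_1)$, $A_k := W^*(v_k,v_{k+1})$ for $1 \le k \le n-1$, and $A_n := W^*(v_n,u_2)$, all regarded as von Neumann subalgebras of $M^{\mathcal U}$. Because of the relations $[u_1,v_1] = [v_k,v_{k+1}] = [v_n,u_2] = 0$, each $A_k$ is generated by a commuting pair, hence is abelian; and each contains one of the Haar unitaries ($v_1 \in A_0$, $v_k \in A_k$ for $1\le k\le n-1$, $v_n \in A_n$), hence is diffuse. Therefore $h(A_k : M^{\mathcal U}) \le h(A_k : A_k) = h(A_k) = 0$ for every $k$, using Fact \ref{fact 1} together with the standard vanishing of $1$-bounded entropy on amenable (in particular diffuse abelian) von Neumann algebras.

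Next I would show $h(A_0 \vee A_1 \vee \dots \vee A_n : M^{\mathcal U}) \le 0$ by induction on the number of algebras joined. Set $B_k := A_0 \vee \dots \vee A_k$, so $B_0 = A_0$ and $h(B_0 : M^{\mathcal U}) \le 0$. For the inductive step, apply Fact \ref{joins} to $N_1 = B_{k-1}$ and $N_2 = A_k$: the intersection $B_{k-1} \cap A_k$ contains $A_{k-1} \cap A_k$, which contains the Haar unitary $v_k$ and is thus diffuse (any von Neumann algebra containing a diffuse unital subalgebra is diffuse), so Fact \ref{joins} applies and gives $h(B_k : M^{\mathcal U}) \le h(B_{k-1} : M^{\mathcal U}) + h(A_k : M^{\mathcal U}) \le h(B_{k-1} : M^{\mathcal U})$. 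Iterating, $h(B_n : M^{\mathcal U}) \le 0$.

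Finally, $B_n = W^*(u_1,u_2,v_1,\dots,v_n) \supseteq \{u_1,u_2\}''$, and $\{u_1,u_2\}''$ is diffuse in the only situations where this fact is used, e.g. whenever one of $u_1,u_2$ is a Haar unitary. One last application of monotonicity (Fact \ref{fact 1}) then gives $h(\{u_1,u_2\}'' : M^{\mathcal U}) \le h(B_n : M^{\mathcal U}) \le 0$, as wanted. I do not anticipate any real difficulty here: the only points requiring care are checking that each consecutive intersection fed to Fact \ref{joins} is genuinely diffuse — which is precisely why the path is taken through the Haar unitaries $v_k$ — and recalling the vanishing of $h$ on amenable diffuse algebras, both of which are routine in the Jung--Hayes theory.
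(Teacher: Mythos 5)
Your proposal is correct and follows essentially the same route as the paper: decompose the path into the abelian diffuse algebras $\{u_1,v_1\}'', \{v_k,v_{k+1}\}'', \{v_n,u_2\}''$, each with vanishing $1$-bounded entropy, iterate Fact \ref{joins} using the Haar unitaries $v_k$ to guarantee diffuse intersections, and finish with monotonicity (Fact \ref{fact 1}). Your explicit check of the diffuse-intersection hypothesis at each inductive step, and the caveat about diffuseness of $\{u_1,u_2\}''$, are details the paper leaves implicit.
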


\begin{proof} Since $\{u_1,v_1\}'',\{v_1,v_2\}'',\cdots, \{v_{n-1},v_n\}'',\{v_n,u_2\}''$ are abelian, we get for each $1\leq i\leq n$, $$h(\{u_1,v_1\}'')=h(\{v_i,v_{i+1}\}'')=h(\{v_n,u_2\}'')=0.$$ Since $\{v_i\}''$  are diffuse for each $1\leq i\leq m$, Fact \ref{joins} implies that $$h(\{u_1,u_2,v_1,\cdots, v_n\}'')=h(\{u_1,v_1\}''\bigvee \{v_1,v_2\}''\bigvee \cdots\bigvee \{v_n,u_2\}'')\leq 0.$$ Hence, using Facts \ref{fact 1} and \ref{in the presence of the ultra} we see that $$h(\{u_1,u_2\}'':M)\leq h(\{u_1,u_2\}'': \{u_1,u_2,v_1,\cdots, v_n\}'')\leq h(\{u_1,u_2,v_1,\cdots, v_n\}'')\leq 0,$$
which proves the fact.
\end{proof}

Below we provide a more or less up to date list of examples of $h(N)>0$.

\begin{thm}\label{examples of h>0}
The following tracial von Neumann algebras $(N,\tau)$ satisfy $h(N)>0$. The first four examples all arise from identifying generating sets $X$ satisfying $\delta_0(X)>1$, and thus $h(N)=\infty$.  

\begin{enumerate}

    \item (see \cite[Lemma 3.7]{JungSB})) $N_1*N_2$ where $(N_1,\tau_1)$ and $(N_2,\tau_2)$ are Connes-embeddable diffuse tracial von Neumann algebras.
    \item The free perturbation algebras of Voiculescu (see Theorem 4.1 in \cite{brownimrn}). 
    \item Many examples of amalgamated free products  $N_1*_{B}N_2$ where $B$ is  amenable (see Section 4 of \cite{DykemaJungBrown} for precise examples).

    \item (see \cite{Shlyakhtenkononinnercocycles}, Theorem 3) Von Neumann algebras of Connes-embeddable nonamenable groups $\Gamma$ admitting non-inner cocycles $c:\Gamma\rightarrow\mathbb C\Gamma$. 
      \item (see \cite{HayesPT}, \cite{PTkilled}, \cite{bordenave2023norm}, \cite{HJKEPT}) Arbitrary nonamenable von Neumann subalgebras of $\text{L}(\mathbb{F}_t)$ for $t>0$.\label{pt 1bdd entropy theorem}
      \item {(see \cite{elayavalli2023remarks}) Von Neumann algebras arising from certain limit groups.}  
      \item {(see \cite[Theorem 1.1]{Jekeltypes})} Matrix ultraproducts {$\prod_{\mathcal U} \mathbb{M}_{n_k}(\mathbb{C})$}.
\end{enumerate}

\end{thm}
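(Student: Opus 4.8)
The plan is to treat Theorem~\ref{examples of h>0} as an assembly of results from the literature, organized around a single mechanism for items (1)--(4) and a monotonicity reduction for item (5). For (1)--(4) I would first record the link between microstates free entropy dimension and $1$-bounded entropy: if a diffuse tracial von Neumann algebra $N=W^*(X)$ is generated by a finite self-adjoint set $X$ with $\delta_0(X)>1$, then $N$ is not strongly $1$-bounded and in fact $h(N)=\infty$, by Jung's theory of strong $1$-boundedness together with Hayes' characterization $h(N)<\infty \iff N$ strongly $1$-bounded (\cite{JungSB, Hayes2018}). Thus for each of (1)--(4) it suffices to exhibit a finite generating set whose free entropy dimension exceeds $1$ and quote the relevant computation.

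Concretely: for (1) I would choose inside each diffuse $N_i$ a copy of $L(\mathbb{Z})$ with finite self-adjoint generating set $X_i$, so that $\delta_0(X_i)=1$; since $W^*(X_1)$ and $W^*(X_2)$ are freely independent in the (again Connes-embeddable) algebra $N_1*N_2$, superadditivity of $\delta_0$ under free independence gives $\delta_0(X_1\cup X_2)\geq 2>1$, which is \cite[Lemma 3.7]{JungSB}. Items (2) and (3) are then direct references to the $\delta_0>1$ computations for Voiculescu's free perturbation algebras in \cite[Theorem 4.1]{brownimrn} and for amalgamated free products $N_1*_B N_2$ with $B$ amenable in \cite[Section 4]{DykemaJungBrown}. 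For (4) I would invoke \cite[Theorem 3]{Shlyakhtenkononinnercocycles}: the existence of a non-inner cocycle $c:\Gamma\to\mathbb{C}\Gamma$ forces $\delta_0(X)>1$ for a free generating set $X$ of the Connes-embeddable group $\Gamma$, morally through the chain $\delta_0\geq\delta^\star\geq 1+\beta_1^{(2)}(\Gamma)-\beta_0^{(2)}(\Gamma)$ combined with $\beta_0^{(2)}(\Gamma)=0$ and the positivity of $\beta_1^{(2)}(\Gamma)$ extracted from the cocycle.

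For item (5) I would argue via the monotonicity Fact~\ref{fact 1}: if $N\subseteq L(\mathbb{F}_t)$ is diffuse and nonamenable, then $h(N)=h(N:N)\geq h(N:L(\mathbb{F}_t))$, so it is enough to know $h(N:L(\mathbb{F}_t))>0$; this is precisely what the recently established Peterson--Thom conjecture delivers, via strong convergence of random matrices and Hayes' reduction (\cite{bordenave2023norm, PTkilled, HayesPT, HJKEPT}). Items (6) and (7) are again quotations: \cite{elayavalli2023remarks} for the limit-group examples (where one again produces a generating set with $\delta_0>1$) and \cite[Theorem 1.1]{Jekeltypes} for the matrix ultraproducts $\prod_{\mathcal{U}}\mathbb{M}_{n_k}(\mathbb{C})$.

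The main obstacle here is one of \emph{input depth} rather than of argument: item (5) rests entirely on the strong-convergence random-matrix estimates, which I would use as a black box, and item (4) is the most technical of the remaining cases since it runs through the non-microstates free entropy dimension and $L^2$-Betti number circle of ideas; the other items reduce to identifying the correct generating set and citing the literature, with no new ingredient required.
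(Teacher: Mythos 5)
Your proposal is correct and matches the paper's treatment: the paper states Theorem \ref{examples of h>0} purely as a compilation of citations, with the stated mechanism for items (1)--(4) being exactly the one you describe (a generating set with $\delta_0(X)>1$ forces $h(N)=\infty$ via Jung--Hayes), and item (5) following from the resolved Peterson--Thom conjecture together with the monotonicity of $h(\,\cdot\,:\,\cdot\,)$. No further argument is given in the paper, so your expansion is consistent with and slightly more detailed than the source.
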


\subsection{Minor modification of the  \cite{exoticCIKE} construction} \label{CIKE construction}

We describe below a slightly modified version of the II$_1$ factor construction introduced in Section 4 of \cite{exoticCIKE}. 

\begin{defn}
    Let $N $ be a II$_1$ factor, and let $u_1,u_2\in \mathcal{H}(N)$ such that $\{u_1\}''\perp \{u_2\}''$. Let $n\geq2$ and let $v_1,\ldots,v_n$ be Haar unitaries each generating a copy of $L(\Z).$ Denote $N = \Phi_n^0(N,u_1,u_2)$ and denote by $\Phi_n^1(N,u_1,u_2)$ the von Neumann algebra $N *_{\{u_1\}''} \{u_1\}''\otimes\{v_1\}''$. For $2\leq i \leq n-1,$ define $\Phi_n^i(N,u_1,u_2) = \Phi_n^{i-1}(N,u_1,u_2) *_{\{v_{i-1}\}''} \{v_{i-1}\}''\otimes \{v_i\}''$. Finally, we define $\Phi_n(N,u_1,u_2) = \Phi_n^n(N,u_1,u_2) = \Phi_n^{n-1}(N,u_1,u_2) *_{\{v_{n-1},u_2\}''} \{v_{n-1},u_2\}''\otimes\{v_n\}''.$

\end{defn}

For a II$_1$ factor $M$, we denote by $\mathcal V(M)$ the set of pairs $(u_1,u_2)\in\mathcal U(M)\times\mathcal U(M)$ such that $u_1,u_2\in \mathcal{H(M)}$ and $\{u_1\}''\perp\{u_2\}''$. We endow $\mathcal U(M)\times\mathcal U(M)$ with the product $\|\cdot\|_2$-topology.

Let $M_1$ be a a II$_1$ factor. We construct a II$_1$ factor following \cite{exoticCIKE} which we denote by $\mathcal{S}_n(M_1)$ which contains $M_1$ and arises as the inductive limit of 
 a sequence $(M_k)_{k\in\mathbb N}$ of II$_1$ factors satisfying $M_k\subset M_{k+1}$, for every $k\in\mathbb N$. Let $\sigma=(\sigma_1,\sigma_2):\mathbb N\rightarrow\mathbb N\times\mathbb N$ be a bijection such that $\sigma_1(k)\leq k$, for every $k\in\mathbb N$. 
 Assume that $M_1,\ldots,M_k$ have been constructed, for some $k\in\mathbb N$.
Let $\{(u_1^{k,\ell},u_2^{k,\ell})\}_{\ell\in\mathbb N}\subset\mathcal V(M_k)$ be a $\|\cdot\|_2$-dense sequence. 
We define $$M_{k+1}:=\Phi_n(M_k,u_1^{\sigma(k)},u_2^{\sigma(k)}).$$
Note that $M_{k+1}$ is well-defined since $\sigma_1(k)\leq k$ and thus $(u_1^{\sigma(k)},u_2^{\sigma(k)})\in\mathcal V(M_k)$.
Then $M_k\subset M_{k+1}$, and we define $$\mathcal{S}_n(M_1):=({\cup_{k\in\mathbb N}M_k})''.$$ 

We remark that for a dense set of pairs of orthogonal Haar unitaries in $\cS_n(M_1),$ there are paths of length $n$ of commuting Haar unitaries in $\cS_n(M_1)$ connecting them. (Namely, the paths are formed by the unitaries $v_1,\ldots,v_n$ used in the above construction.) In the notation of Definition \ref{def-path}, $p(u_1,u_2) \leq n+1$ for a dense set of pairs of orthogonal Haar unitaries $(u_1,u_2).$ Lemma \ref{density lemma} will imply that $p(u_1,u_2)\leq n+1$ for all pairs of orthogonal Haar unitaries in $\cS_n(M_1)$ and in we will see in Theorem \ref{cike-one-orb} that in fact $p(u_1,u_2) \leq 2(n+1)$ for all pairs of Haar unitaries (not necessarily orthogonal) in $\cS_n(M_1).$

We now record some important facts about $\mathcal{S}_n(M_1)$. The proof of the proposition is adapted from the proof of Theorem 4.2 in \cite{exoticCIKE}.

\begin{prop}\label{factorlemmanew}
    If $M_1$ is a II$_1$ factor, then $M_1'\cap \cS_n(M_1) = \C$. In particular, $\cS_n(M_1)$ is a II$_1$ factor. 
\end{prop}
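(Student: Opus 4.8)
The plan is to mimic the proof of Theorem 4.2 in \cite{exoticCIKE}, pushing the relative commutant computation through each step of the inductive construction and then passing to the limit. Recall that $\mathcal{S}_n(M_1) = (\bigcup_k M_k)''$ where $M_{k+1} = \Phi_n(M_k, u_1^{\sigma(k)}, u_2^{\sigma(k)})$, and each $\Phi_n^i$ is itself built from iterated amalgamated free products of the form $P *_B (B \otimes \{v\}'')$. So the first reduction is a general fact about amalgamated free products: if $P = Q *_B (B \otimes A)$ with $A$ diffuse (here $B = \{u_1\}''$ or $\{v_{i-1}\}''$ or $\{v_{n-1}, u_2\}''$, $A = \{v_i\}''$), and $B' \cap Q = \mathcal{Z}(B)$ or more precisely $M_1' \cap Q$ is already $\mathbb{C}$, then $M_1' \cap P = \mathbb{C}$ as well. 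The key input is the standard structural result on relative commutants in amalgamated free products (e.g.\ of the type used by Ioana--Peterson--Popa or the malleable deformation/spectral gap arguments, or more elementarily the fact that for $x \in Q *_B(B\otimes A)$ commuting with a diffuse subalgebra of $Q$ that has trivial relative commutant in $Q$, one shows $x \in Q$ by a moment/word-length computation, and then $x \in M_1' \cap Q = \mathbb{C}$).

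The key steps, in order. First I would establish the one-step lemma: if $M_1 \subset Q$ is a II$_1$ factor inclusion with $M_1' \cap Q = \mathbb{C}$, $B \subset Q$ is one of the relevant (finite-dimensional-center, diffuse) subalgebras containing a diffuse part on which $M_1$ has trivial relative commutant, and $A$ is diffuse abelian, then $M_1' \cap (Q *_B (B \otimes A)) = \mathbb{C}$. The argument: take $x$ commuting with $M_1$; since $M_1$ contains a Haar unitary $u_1$ (one of $u_1, v_1, \ldots$) and $B \supseteq \{u_1\}''$ or the relevant diffuse generator, a mixing/word-length estimate in the amalgamated free product forces $x \in Q$; then $x \in M_1' \cap Q = \mathbb{C}$. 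Second, I would iterate this across $i = 1, \ldots, n$ to conclude $M_1' \cap \Phi_n(M_k, u_1, u_2) = \mathbb{C}$ whenever $M_1' \cap M_k = \mathbb{C}$; combined with the base case $M_1' \cap M_1 = \mathbb{C}$ (which holds since $M_1$ is a factor), induction gives $M_1' \cap M_k = \mathbb{C}$ for all $k$. Third, pass to the limit: if $x \in M_1' \cap \mathcal{S}_n(M_1)$, approximate $x$ in $\|\cdot\|_2$ by elements of $\bigcup_k M_k$; using that $E_{M_k}(x)$ still commutes with $M_1$ (since $M_1 \subset M_k$ and the conditional expectation onto $M_k$ is $M_k$-bimodular) we get $E_{M_k}(x) \in M_1' \cap M_k = \mathbb{C}$, hence $E_{M_k}(x) = \tau(x)$, and since $E_{M_k}(x) \to x$ in $\|\cdot\|_2$ we conclude $x = \tau(x) \in \mathbb{C}$. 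Finally, "in particular" follows: a diffuse (it contains $M_1$) tracial von Neumann algebra whose a fortiori $\mathcal{Z}(\mathcal{S}_n(M_1)) \subseteq M_1' \cap \mathcal{S}_n(M_1) = \mathbb{C}$ is a II$_1$ factor.

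The main obstacle I expect is the one-step lemma, specifically showing $x \in Q$ given only that $x$ commutes with $M_1$ (which sits inside $Q$ but is not all of $Q$, and in later amalgamation steps $M_1$ is quite "deep" inside $Q$ relative to the amalgam subalgebra $B$). One must argue that $M_1$, or rather a well-chosen diffuse/Haar unitary inside $M_1 \cap B$-complement, is "mixing" relative to $B$ inside $Q$ — but in fact in the construction the amalgam is always over a subalgebra $B$ that is orthogonal-ish to (or at least does not contain) a Haar unitary witnessing $M_1$'s triviality, so the relevant statement is really: a Haar unitary $w$ with $\{w\}'' \cap B = \mathbb{C}$ (or with $\{w\}''$ and $B$ in "general position") has the property that anything commuting with $w$ in $Q *_B(B\otimes A)$ lies in $Q$. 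This is exactly the kind of computation done in \cite{exoticCIKE} Theorem 4.2, and I would either cite it or redo the word-length/$L^2$ estimate: expand a putative $x \notin Q$ in reduced words, conjugate by powers of $w$, and use that the off-$Q$ part gets pushed to orthogonal subspaces, contradicting $[x,w]=0$. A secondary subtlety is bookkeeping: verifying that at each of the $n$ amalgamation stages the hypotheses of the one-step lemma are met — in particular that $M_1$ embeds with trivial relative commutant into each intermediate $\Phi_n^i$ and that the relevant diffuse Haar unitary survives — which is routine but needs to be stated carefully, exactly as in the cited proof.
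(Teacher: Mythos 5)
Your proposal follows essentially the same route as the paper: induct over the amalgamation stages $\Phi_n^i$, control the relative commutant at each stage via the Ioana--Peterson--Popa theorem for amalgamated free products (the paper cites Theorem 1.1 of \cite{IPP08} rather than redoing a word-length estimate), and pass to the limit; the "in particular" is handled exactly as you say. Two caveats. First, the hypothesis you float for the one-step lemma, namely $\{w\}''\cap B=\mathbb{C}$ or "general position," is not the right one: what the IPP-type statement requires is non-intertwining in Popa's sense, $M_1\not\prec_Q B$, which for most stages holds simply because $M_1$ is a II$_1$ factor and the base $B=\{v_{i-1}\}''$ is abelian. Second, the step you dismiss as routine bookkeeping is the one place where something genuinely extra is needed: the final amalgamation is over $B=\{v_{n-1},u_2\}''$, which is neither abelian nor in general position with respect to $M_1$ (for $k=1$ the unitary $u_2$ lies in $M_1$ itself), so "a factor does not embed into an abelian algebra" does not apply. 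The paper handles this by first showing $M_1\not\prec_{\Phi_n^{n-1}}\{v_{n-2}\}''$ and $M_1\not\prec_{\Phi_n^{n-2}}\{v_{n-1}\}''$ and then invoking Lemma 4.6 of \cite{exoticCIKE} to combine these into $M_1\not\prec_{\Phi_n^{n-1}}\{v_{n-1},u_2\}''$, after which Theorem 1.1 of \cite{IPP08} applies as before. Your outline is correct, but this join-of-subalgebras step must be supplied explicitly; it does not follow from the one-step lemma as you stated it.
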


\begin{proof}
    It suffices to show that $M_1'\cap M_k = \C1$ for each $k\geq 1$, and in turn it suffices to show that $M_1'\cap \Phi_n^i(M_k,u_1^{\sigma(k)},u_2^{\sigma(k)}) = \C1$ for all $k\geq1$ and all $0\leq i \leq n$. 

    For the first case, consider any $k\geq1$ and $1\leq i\leq n-1.$ We write $\Phi_n^j$ in place of $\Phi_n^j(M_k,u_1^{\sigma(k)},u_2^{\sigma(k)})$ for ease of notation. We have $\Phi_n^i = \Phi_n^{i-1} *_{\{v_{i-1}\}''}\{v_{i-1}\}''\otimes \{v_i\}''$ for Haar unitaries $v_{i-1},v_i.$ Since $M_1$ is a II$_1$ factor, $M_1 \not\prec_{\Phi_n^{i-1}}\{v_{i-1}\}''$. By Theorem 1.1 of \cite{IPP08}, we get that $M_1'\cap \Phi_n^i \subset \Phi_n^{i-1}$. This implies by the induction hypothesis that $M_1'\cap \Phi_n^i = \C1.$

    Now consider when $i = 0.$ If $k = 1,$ this is the base case. Otherwise, $\Phi_n^0(M_k,u_1^{\sigma(k)},u_2^{\sigma(k)}) = M_k = \Phi_n^n(M_{k-1},u_1^{\sigma(k-1)},u_2^{\sigma(k-1)})$. So we may assume that $i = n$ and $k\geq1$ We again write $\Phi_n^j$ as shorthand for $\Phi_n^j(M_k,u_1^{\sigma(k)},u_2^{\sigma(k)})$. We write $u_1$ and $u_2$ as shorthand for $u_1^{\sigma(k)}$ and $u_2^{\sigma(k)}$ respectively. 

    If $n=2,$ our convention is that $v_0 = u_1$. We recall that $\Phi_n^{n-1} = \Phi_n^{n-2} *_{\{v_{n-2}\}''} \{v_{n-2}\}''\otimes\{v_{n-1}\}'' $, that $\{u_2\}''\perp\{v_{n-2}\}''$, and that $\{v_{n-1}\}''\perp \{v_{n-2}\}''.$ Since $M_1$ is a II$_1$ factor, $M_1\not\prec_{\Phi_n^{n-1}} \{v_{n-2}\}''$ and $M_1\not\prec_{\Phi_n^{n-2}} \{v_{n-1}\}''$. Lemma 4.6 of \cite{exoticCIKE} then says that $M_1\not\prec_{\Phi_{n}^{n-1}} \{v_{n-1}\}''\bigvee\{u_2\}'' = \{v_{n-1},u_2\}''.$

    We now recall that $\Phi_n^n = \Phi_n^{n-1} *_{\{v_{n-1},u_2\}''} \{v_{n-1},u_2\}''\otimes \{v_n\}''$. Applying Theorem 1.1 of \cite{IPP08}, we get that $M_1'\cap \Phi_n^n \subset \Phi_n^{n-1}$. By the induction hypothesis we get that $M_1'\cap \Phi_n^n = \C1$.
\end{proof}

\begin{cor}
    If $M_1$ is a II$_1$ with Property (T), then $\cS_n(M_1)$ does not have Property Gamma.
\end{cor}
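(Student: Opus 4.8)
The plan is to combine Proposition \ref{factorlemmanew} with a relative commutant argument showing that Property (T) of $M_1$ forces any approximate central sequence of $\mathcal{S}_n(M_1)$ to be already central. Concretely, since $\mathcal{S}_n(M_1)$ is a II$_1$ factor (by Proposition \ref{factorlemmanew}), it suffices to show $\mathcal{S}_n(M_1)'\cap \mathcal{S}_n(M_1)^{\mathcal U} = \C1$ for a free ultrafilter $\mathcal U$ on $\N$. First I would note $M_1 \subset \mathcal{S}_n(M_1)$ is an inclusion of a Property (T) factor, so by the standard spectral-gap consequence of Property (T) (the relative commutant of a Property (T) subfactor in any ambient tracial von Neumann algebra's ultrapower sits inside the ultrapower of the relative commutant), we get $M_1'\cap \mathcal{S}_n(M_1)^{\mathcal U} = (M_1'\cap \mathcal{S}_n(M_1))^{\mathcal U} = \C1$ by Proposition \ref{factorlemmanew}.

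Next, since $\mathcal{S}_n(M_1)'\cap \mathcal{S}_n(M_1)^{\mathcal U} \subset M_1'\cap \mathcal{S}_n(M_1)^{\mathcal U}$, we conclude $\mathcal{S}_n(M_1)'\cap \mathcal{S}_n(M_1)^{\mathcal U} = \C1$, which is precisely the negation of Property Gamma. I would state the spectral gap fact as a black box with a reference (e.g.\ Connes' original rigidity argument, or Popa's treatment), since it is classical: a Property (T) inclusion is "$w$-rigid" and its relative commutant in an ultrapower does not grow. This gives a short, clean proof.

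The main obstacle — really the only subtlety — is making sure the Property (T) spectral-gap statement is invoked in the correct generality: one needs that $M_1$ having Property (T) (as an abstract II$_1$ factor) implies that for any tracial von Neumann algebra $M \supset M_1$, any bounded sequence in $M$ that asymptotically commutes with $M_1$ is asymptotically in $M_1'\cap M$. This is exactly the content of the fact that Property (T) factors have "property (T) relative embeddings," and it is standard. One should cite it carefully rather than reprove it. Everything else is a one-line containment argument plus an application of the already-proved Proposition \ref{factorlemmanew}.

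Thus the proof sketch is: (i) $\mathcal{S}_n(M_1)$ is a II$_1$ factor and $M_1'\cap \mathcal{S}_n(M_1)=\C1$ by Proposition \ref{factorlemmanew}; (ii) Property (T) of $M_1$ plus the spectral gap/relative commutant principle gives $M_1'\cap \mathcal{S}_n(M_1)^{\mathcal U}=(M_1'\cap \mathcal{S}_n(M_1))^{\mathcal U}=\C1$; (iii) $\mathcal{S}_n(M_1)'\cap \mathcal{S}_n(M_1)^{\mathcal U}\subset M_1'\cap \mathcal{S}_n(M_1)^{\mathcal U}=\C1$, so $\mathcal{S}_n(M_1)$ lacks Property Gamma.
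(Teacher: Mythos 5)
Your proof is correct and is essentially identical to the paper's: both use Proposition \ref{factorlemmanew} to get $M_1'\cap \mathcal{S}_n(M_1)=\C1$, invoke the spectral-gap consequence of Property (T) to obtain $M_1'\cap \mathcal{S}_n(M_1)^{\mathcal U}=(M_1'\cap \mathcal{S}_n(M_1))^{\mathcal U}=\C1$, and conclude via the containment $\mathcal{S}_n(M_1)'\cap \mathcal{S}_n(M_1)^{\mathcal U}\subset M_1'\cap \mathcal{S}_n(M_1)^{\mathcal U}$. No gaps.
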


\begin{proof}
    Since $M_1' \cap \cS_n(M_1) = \C1$ and $M_1$ has Property (T), we get that $\cS_n(M_1)'\cap \cS_n(M_1)^\cU \subset M_1'\cap \cS_n(M_1)^\cU = (M_1'\cap \cS_n(M_1))^\cU = \C1.$
\end{proof}

\subsection{Freeness and lifting in ultraproducts}

\begin{defn}\label{defn-indep}
    Let $(M,\tau)$ be a tracial von Neumann algebra. Two subsets $X,Y\subset M\ominus \C$ are said to be \emph{independent} if whenever $x\in X$ and $y\in Y$, we have $\tau(xy) = 0.$ This is also referred to as orthogonality in the literature, and sometimes written $X \perp Y.$

    Two subsets $X_1,X_2\in M\ominus \C$ are said to be \emph{freely independent} if whenever $x_k\in X_{i_k}$ for $1\leq k\leq n$, and $i_k\neq i_{k+1}$ for all $1\leq k\leq n-1,$ we have $\tau(x_1x_2\cdots x_n) = 0.$
\end{defn}

We will need the following results on asymptotic freeness in the ultrapowers. 

\begin{thm}[\cite{Popaindep}]\label{Popa freeness}
    Let $M_n$, $n\in \mathbb{N}$ be a sequence of  II$_1$ factors, $\mathcal{U}$ a non-principal ultrafilter on $\mathbb{N}$, and $A\subset \prod_{n\to \mathcal{U}} M_n$ a separable von Neumann subalgebra. Then there is a Haar unitary $u\in \prod_{n\to \mathcal{U}} M_n$ such that $u$ is freely independent from $A$. 
\end{thm}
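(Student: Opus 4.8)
The statement to prove is Popa's theorem on existence of a Haar unitary in an ultraproduct $\prod_{n\to\mathcal U}M_n$ that is freely independent from a prescribed separable subalgebra $A$. The plan is to reduce the problem to a ``local'' construction at each index $n$ and then use the ultrafilter limit. First I would observe that since $A$ is separable, it is generated by a countable SOT-dense $*$-subalgebra, and it suffices to produce a unitary $u$ such that for every reduced word alternating between elements of $A\ominus\C$ and elements of $W^*(u)\ominus\C$ the trace vanishes; by a standard approximation and Kaplansky-density argument it is enough to check this on a countable dense family of test elements $a_1,a_2,\dots$ in the unit ball of $A$ and on monomials $u^{k}$, $k\neq 0$ (since $\{u^k\}$ spans a dense subspace of $W^*(u)$ when $u$ is Haar). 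So the target reduces to: the single countable list of moment conditions
\[
\tau\bigl(a_{j_1}u^{k_1}a_{j_2}u^{k_2}\cdots a_{j_m}u^{k_m}\bigr)=0
\]
for all $m$, all $j_\ell$, and all nonzero integers $k_\ell$, together with $\tau(u^k)=0$.

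The key step is a finite/approximate local realization: given a finite subalgebra-approximant $A_0\subset M_n$ (the $n$-th ``slice'' of generators of $A$, chosen via a lift of a countable generating set of $A$ to $\prod M_n$) and tolerance $\varepsilon>0$, one wants a Haar unitary $u^{(n)}\in M_n$ that is $\varepsilon$-approximately free from $A_0$ with respect to the finitely many test words of ``complexity $\le n$''. This is where I expect the main work to lie, and there are two natural routes. The cleaner route is to invoke asymptotic freeness of Haar unitaries: inside the single II$_1$ factor $M_n$, the relative commutant / a diffuse abelian subalgebra together with a Haar unitary conjugated by Haar-random-type unitaries becomes asymptotically free from any fixed finite set — more precisely, one uses Voiculescu's asymptotic freeness or, more directly, the fact that in a II$_1$ factor one can find, for any finite $F\subset M_n$ and any $\varepsilon$, a Haar unitary that is $\varepsilon$-free from $F$ (this is itself essentially Popa's intertwining-free construction, or can be extracted from freeness with amalgamation over a matrix subalgebra built by cutting $M_n$ into many pieces of equal trace and permuting them). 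The alternative, more self-contained route is to build $u^{(n)}$ directly as an ``approximately permuting'' unitary: cut $1$ into $d$ projections $p_1,\dots,p_d$ of trace $1/d$, take partial isometries cyclically permuting them to get a unitary of period $d$ (hence $\tau((u^{(n)})^k)=0$ for $0<|k|<d$), and arrange the cut to be ``in general position'' relative to $A_0$ so that conditional expectations onto $W^*(u^{(n)})$ of products nearly factor through the trace; letting $d\to\infty$ along the ultrafilter kills all moment conditions in the limit and upgrades $u^{(n)}$ to a genuine Haar unitary in the ultraproduct.

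With the local construction in hand, the assembly is routine: enumerate the countably many moment conditions, and for the $n$-th index choose $\varepsilon_n\to 0$ and a test-word budget growing to infinity, pick $u^{(n)}\in\mathcal H(M_n)$ realizing the $n$-th batch of conditions to within $\varepsilon_n$, and set $u=(u^{(n)})_n\in\prod_{n\to\mathcal U}M_n$. Then for any fixed moment condition, it is satisfied exactly along a tail of indices in $\mathcal U$, so $\tau(\cdot)=0$ in the ultraproduct; since the conditions checked form a dense family, $u$ is freely independent from $A$ and $\tau(u^k)=0$ for all $k$, i.e.\ $u\in\mathcal H(\prod_{n\to\mathcal U}M_n)$ is free from $A$.

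\textbf{Main obstacle.} The crux is the approximate local freeness step — producing, inside a single II$_1$ factor $M_n$ with no assumed structure beyond being a II$_1$ factor, a Haar unitary that is $\varepsilon$-free from a given finite set. The honest way to do this uses the II$_1$ factor structure essentially (e.g.\ cutting into $d$ equal-trace projections and permuting, then controlling the error via a counting/averaging estimate on how the ``randomized'' cut interacts with a fixed finite-dimensional operator-system spanned by the test elements), and making the error estimate uniform over the growing batch of test words of bounded complexity requires some care; everything else is soft. Since Popa's paper \cite{Popaindep} is cited as the source, I would in practice simply cite it, but the sketch above indicates the self-contained argument.
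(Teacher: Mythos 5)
The paper offers no proof of this statement: it is imported verbatim from \cite{Popaindep}, so there is no internal argument to compare against. Judged on its own, your outline has the right architecture — reduce freeness from $A$ to a countable family of vanishing-moment conditions on alternating words (legitimate, since $\{u^k : k\neq 0\}$ spans $W^*(u)\ominus\C$ densely when $u$ is Haar, and $A$ is separable), realize each finite batch of conditions approximately at each index $n$, and assemble along $\mathcal U$. The assembly step and the reduction are indeed routine, and your closing ultralimit argument is fine once one notes the conditions are satisfied \emph{approximately} (to within $\varepsilon_n\to 0$) on a tail, not exactly.

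The genuine gap is exactly where you place it, but it is larger than your sketch suggests: the local lemma — in a single II$_1$ factor $M_n$, given a finite set $F$ and $\varepsilon>0$, produce a unitary with vanishing low-order moments that is $\varepsilon$-free from $F$ on words of bounded complexity — \emph{is} the theorem; everything else is bookkeeping. Of your two proposed routes, the first does not work as stated: Voiculescu's asymptotic freeness is a random-matrix phenomenon, and in an abstract II$_1$ factor there is no invariant probability measure on $\mathcal U(M_n)$ over which to average a "Haar-random" conjugation, so one cannot conclude that a conjugated Haar unitary becomes asymptotically free from a fixed finite set. The second route (cut $1$ into $d$ equal-trace projections cyclically permuted by a unitary of order $d$, with the partition in "general position" relative to $F$) is the correct one and is essentially what Popa does, but the claim that general position can always be arranged is precisely the hard content: Popa's proof runs through an incremental patching / maximality argument (a Zorn's-lemma construction of partial partitions, where failure to extend contradicts the local quantization principle in a II$_1$ factor), together with explicit averaging estimates controlling how the partition interacts with the finite-dimensional operator system spanned by the test words. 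Without that lemma the proof is a reduction of the theorem to itself; with it, your assembly is correct. Citing \cite{Popaindep} for the local step, as you say you would in practice, is the honest resolution.
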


Secondly, we need the recent free independence theorem of Houdayer-Ioana. The version we state is just a special case we need, following from Lemma 3.1 and Theorem A in \cite{houdayer2023asymptotic}.  

\begin{thm}[\cite{houdayer2023asymptotic}]\label{HI independence}
      Consider an amalgamated free product II$_1$  factor $M= M_1 *_{B} M_2$, and $u_i \in \mathcal{H}(M_i)$ such that $E_{B}(u_i^k)=0$ for every $k\in \mathbb{N}$ and $i\in \{1,2\}$. Suppose $v_i\in \mathcal{H}(M^\mathcal{U})$ such that $[v_i, u_i]=0$ and $E_{B^{\mathcal{U}}}(v_i)=0$ for each $i\in \{1,2\}$. Then $v_1,v_2$ are freely independent. 
\end{thm}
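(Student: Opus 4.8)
The statement to prove is Theorem \ref{HI independence}, the Houdayer--Ioana asymptotic free independence result in amalgamated free products, which is invoked here as a black box citing \cite{houdayer2023asymptotic}. Since the excerpt explicitly says this follows from Lemma 3.1 and Theorem A of that paper, my plan is to reconstruct the argument in the special case stated, rather than reprove the full machinery.

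\textbf{Overall strategy.} The plan is to show that a family of $n$-fold products $v_{i_1} v_{i_2} \cdots v_{i_n}$ with $i_k \neq i_{k+1}$ all have trace zero. The key idea is that each $v_i$ lies in the relative commutant $\{u_i\}' \cap M^{\mathcal U}$ and has vanishing conditional expectation onto $B^{\mathcal U}$; one wants to exploit the amalgamated free product structure $M = M_1 *_B M_2$ to push these facts down to the approximating sequences. First I would reduce to the case where the ambient factor is separable and $B$ is, say, the hyperfinite II$_1$ factor or at least amenable — actually in the stated generality $B$ is arbitrary, so instead I would work directly with Popa-style intertwining and malleable deformation techniques, or more simply with the ``freeness with amalgamation'' characterization: it suffices to prove that the von Neumann algebras $P_i := \{v_i\}'' \vee B^{\mathcal U}$ are freely independent over $B^{\mathcal U}$ inside $M^{\mathcal U}$, and then restrict.

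\textbf{Key steps in order.} (1) Lift $v_i$ to a bounded sequence of unitaries $(v_i^{(m)})_m$ in $M$; using the Haar lifting lemma in the amalgamated setting (an analogue of Lemma \ref{haar lifting OP} relative to $B$), arrange that each $v_i^{(m)}$ is a Haar unitary with $E_B((v_i^{(m)})^k) \to 0$ and approximately commutes with $u_i$. (2) The heart of the matter — this is where Lemma 3.1 of \cite{houdayer2023asymptotic} enters — is an asymptotic orthogonality / asymptotic freeness estimate: because $u_i \in M_i$ is a Haar unitary over $B$, any element of $M^{\mathcal U}$ commuting with $u_i$ and orthogonal to $B^{\mathcal U}$ must, asymptotically, be ``spread out'' across the free product in a way that is asymptotically free from the corresponding object on the other side. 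Concretely, one estimates $\|E_{B^{\mathcal U}}(a_0 v_{i_1} a_1 v_{i_2} \cdots v_{i_n} a_n)\|_2$ for $a_j \in B^{\mathcal U}$ and shows it vanishes, by repeatedly using that conjugating by high powers $u_i^N$ moves the relevant words into ``free position'' while fixing $v_i$, and then invoking the moment computations in $M_1 *_B M_2$. (3) Conclude that $v_1, v_2$ (equivalently the algebras they generate together with $B^{\mathcal U}$) are free over $B^{\mathcal U}$; restricting to the subalgebras $\{v_i\}''$ and noting $E_{B^{\mathcal U}}(v_i) = 0$, plain (scalar) free independence of $v_1$ and $v_2$ follows, since freeness over $B^{\mathcal U}$ plus vanishing $B^{\mathcal U}$-valued expectations gives vanishing of alternating scalar moments.

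\textbf{Main obstacle.} The crux — step (2) — is the genuinely hard analytic input: proving the asymptotic freeness estimate without any amenability hypothesis on $B$. The trick of conjugating by $u_i^{N}$ exploits that $u_i$ is Haar \emph{relative to} $B$ (i.e. $E_B(u_i^k) = 0$), which makes $u_i^N$ asymptotically free-like from bounded sets as $N \to \infty$, so that words $u_i^N v_i u_i^{-N} = v_i$ get carried into positions where the standard free-product trace estimate (alternating centered words have trace $0$) applies to everything \emph{except} the $v_i$'s themselves; the approximate commutation $[v_i, u_i] \approx 0$ is exactly what lets one perform this substitution with controlled error. Managing the interplay between the two limits (the ultrafilter limit in $m$ and the auxiliary limit in $N$), and handling the $B^{\mathcal U}$-coefficients $a_j$ which need not commute with anything, is the delicate bookkeeping. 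I expect that, as in \cite{houdayer2023asymptotic}, one organizes this via a single clean lemma about ``$E_{B^{\mathcal U}}$ of an alternating word vanishes'' proved by induction on word length, with the base case being the definition of $E_{B^{\mathcal U}}(v_i) = 0$ and the inductive step the $u_i^N$-conjugation argument.
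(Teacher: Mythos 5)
This statement is not proved in the paper at all: it is imported verbatim as a special case of Lemma 3.1 and Theorem A of \cite{houdayer2023asymptotic}, so there is no internal argument to compare yours against. Your reduction in step (3) is fine for the stated conclusion: once the algebras $\{v_i\}''\vee B^{\mathcal U}$ are free with amalgamation over $B^{\mathcal U}$, the hypothesis $E_{B^{\mathcal U}}(v_i)=0$ and $\tau=\tau\circ E_{B^{\mathcal U}}$ immediately kill the alternating moments $\tau(v_{i_1}v_{i_2}\cdots v_{i_n})$, which is exactly what ``freely independent'' means for the two singletons in Definition \ref{defn-indep} (note you do not get, and do not need, vanishing of moments of higher powers $v_i^k$, since only $E_{B^{\mathcal U}}(v_i)=0$ is assumed).

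The genuine gap is your step (2), which you yourself flag as the crux and then do not carry out. Moreover, the mechanism you propose for it --- conjugating words by high powers $u_i^N$ so that everything but the $v_i$'s lands ``in free position'' --- is the classical Popa asymptotic-orthogonality trick, which works for plain free products (and is essentially how \cite{exoticCIKE} and Popa's 1983 argument proceed) but is known to be insufficient on its own in the amalgamated setting with an arbitrary, possibly nonamenable $B$: the $B$-coefficients interleaved in a reduced word are not controlled by any $L^2$ estimate of that type. This is precisely why Houdayer--Ioana's actual proof runs through noncommutative $L^p$-space theory (complete boundedness of the word-length/position projections in $M_1*_BM_2$ on $L^p$ for even $p$, after \cite{mei}), which lets them show that a unitary commuting with $u_i$ and orthogonal to $B^{\mathcal U}$ is asymptotically supported on reduced words beginning and ending with a letter of $M_i\ominus B$; freeness of $v_1$ and $v_2$ then follows from the ordinary alternating-word computation. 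So your outline has the right architecture, but the one estimate that constitutes the theorem is asserted rather than proved, and the route you sketch toward it would fail in the stated generality.
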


We need also a special case of the lifting theorem in Theorem 5.1 in \cite{houdayer2023asymptotic}.  

\begin{thm}[\cite{houdayer2023asymptotic}]\label{HI lifting}
    Let $w_1,w_2\in \mathcal{H}(N^\mathcal{U})$ be such that $w_1$ is freely independent from $w_2$. Then  there exist lifts $w_i= (w_i^{(n)})_{\mathcal{U}}$ such that $w_1^{(n)}$ is independent to $w_2^{(n)}$ for every $n\in \mathbb{N}$. 
\end{thm}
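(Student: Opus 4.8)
The plan is to prove Theorem \ref{HI lifting} as a last step of a program; but here the task is the final displayed statement before the proof, which is Theorem \ref{HI lifting} itself: lifting two freely independent Haar unitaries in $N^{\mathcal U}$ to sequences of pairwise independent (orthogonal) Haar unitaries. Since the excerpt says this is a special case of Theorem 5.1 of \cite{houdayer2023asymptotic}, the honest approach is to reduce to that reference, but I will instead sketch a self-contained argument in case a proof is wanted.

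\medskip

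\textbf{Setup and first reductions.} Fix lifts $w_i = (w_i^{(n)})_n$ with each $w_i^{(n)} \in \mathcal U(N)$; by Lemma \ref{haar lifting OP} applied inside $N^{\mathcal U}$ (with $D$ the SOT-dense set of Haar unitaries, which is dense by Lemma \ref{lem-diff-lift}) we may assume each $w_i^{(n)}$ is already a Haar unitary, passing to a set in $\mathcal U$. So the only thing to upgrade is the \emph{joint} condition $\tau(w_1^{(n)} w_2^{(n)}) = 0$, i.e. orthogonality of the two sequences entrywise (independence in the sense of Definition \ref{defn-indep}). Free independence of $w_1,w_2$ in $N^{\mathcal U}$ gives in particular $\tau(w_1 w_2) = 0$, hence $\lim_{n\to\mathcal U} \tau(w_1^{(n)} w_2^{(n)}) = 0$; but we need it to be \emph{exactly} $0$ for all $n$ in a set of $\mathcal U$, not just in the limit.

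\medskip

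\textbf{The correction step.} The idea is to perturb $w_2^{(n)}$ by a small unitary rotation, adapted to $w_1^{(n)}$, to kill the inner product while staying Haar and staying close (so the limit is unchanged). Concretely, work in the abelian von Neumann algebra $A_n = W^*(w_2^{(n)}) \cong L^\infty(\mathbb T)$, realize $w_2^{(n)}$ as the generating Haar unitary $z\mapsto z$, and look for a replacement of the form $w_2^{(n)} h_n$ where $h_n \in \mathcal U(A_n)$ is a function of modulus one close to $1$; since $A_n$ is diffuse one can also bring in an auxiliary diffuse abelian subalgebra of $N$ orthogonal to both $w_1^{(n)}$ and $A_n$ (guaranteed when $N$ is diffuse, using e.g. a spectral cutdown argument as in Lemma \ref{haar lifting OP}) and rotate into it, so that the new unitary is still Haar, is within $o(1)$ of $w_2^{(n)}$ in $\|\cdot\|_2$, and has $\tau(w_1^{(n)} \tilde w_2^{(n)}) = 0$ on the nose. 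The existence of such a perturbation for large $n$ (i.e. on a set in $\mathcal U$) follows from an intermediate value / connectedness argument: the map sending the rotation parameter to $\tau(w_1^{(n)}\cdot(\text{rotated }w_2^{(n)}))$ is continuous, takes a value of modulus $o(1)$ at parameter $0$, and can be driven to have the opposite phase by a rotation of size $O(\text{that modulus})$, so $0$ is attained in between; one then checks the new sequence still has limit $w_2$ and is a lift of $w_2$.

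\medskip

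\textbf{Main obstacle.} The delicate point is making the perturbation simultaneously (i) \emph{exactly} orthogonal to $w_1^{(n)}$ entrywise, (ii) a genuine Haar unitary (not merely a unitary with vanishing first moment), and (iii) $\|\cdot\|_2$-convergent to $w_2$ so that it remains a lift of the \emph{same} $w_2$ and the free independence in the limit is preserved — all uniformly over a set in $\mathcal U$. Requirement (ii) is where diffuseness of $N$ is essential: one needs room to rotate without creating atoms, which is exactly the mechanism already used in the proof of Lemma \ref{haar lifting OP} (building projections $q_j$ of trace $1/N$ commuting with given projections). I expect this bookkeeping — patching the small-rotation argument to preserve Haar-ness using an independent diffuse subalgebra — to be the technical heart, and it is precisely the content of Theorem 5.1 of \cite{houdayer2023asymptotic}, so in the paper it is legitimate to simply cite that result as stated.
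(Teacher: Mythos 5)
The paper itself offers no proof of this statement: it is imported verbatim as a special case of Theorem 5.1 of \cite{houdayer2023asymptotic}, so your closing remark that one may simply cite that result is exactly what the paper does, and there is no internal argument to compare against. The issue is with the self-contained sketch you offer in the middle.

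The genuine gap is in your reduction ``the only thing to upgrade is the joint condition $\tau(w_1^{(n)}w_2^{(n)})=0$.'' Independence of two Haar unitaries, as this paper uses it (see the definition of $\mathcal V(M)$ and the comments preceding Proposition \ref{factorlemmanew}, where the lifted pairs must be \emph{orthogonal}, i.e.\ $\{w_1^{(n)}\}''\perp\{w_2^{(n)}\}''$), means $\tau\bigl((w_1^{(n)})^k(w_2^{(n)})^l\bigr)=0$ for \emph{all} nonzero integers $k,l$ --- infinitely many exact constraints at each $n$ --- not the single $(1,1)$ moment. Free independence of $w_1,w_2$ does give $\lim_{n\to\mathcal U}\tau\bigl((w_1^{(n)})^k(w_2^{(n)})^l\bigr)=0$ for each fixed $(k,l)$, but your one-parameter rotation can at best kill one of these quantities; it cannot enforce all of them simultaneously and exactly, and a lift achieving only $\tau(w_1^{(n)}w_2^{(n)})=0$ would be useless for the application in Theorem \ref{cike-one-orb}. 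This simultaneous exact correction of the whole joint distribution is precisely the nontrivial content of the Houdayer--Ioana lifting theorem. Secondarily, even the single-moment step is not justified as written: $\tau(w_1^{(n)}\cdot(\text{rotated }w_2^{(n)}))$ is complex-valued, and the intermediate value theorem does not apply --- a continuous one-parameter path from a small value $z$ to a value of ``opposite phase'' need not pass through $0\in\mathbb C$; one would need a two-parameter family together with a degree-type argument, or a genuinely different construction. So the sketch should not be presented as a proof; the citation is the correct resolution.
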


\subsection{Silver's Dichotomy}

We state a special case of Silver's Dichotomy \cite{silver1980counting} (see also Theorem 5.3.5 in \cite{gao2008invariant}).

\begin{thm}[Silver's Theorem]\label{silver-dichotomy}
    Let X be a standard Borel space and E a Borel equivalence relation on
    X. Then either there are countably many E-equivalence classes or there are
    continuum many E-equivalence classes.
\end{thm}

Silver's Theorem shows that whenever $(M,\tau)$ is separable and $\sim_M$ has uncountably many orbits, it in fact has exactly $\mathfrak c$ orbits.

\section{The equivalence relation $\sim_M$}

\subsection{Definitions and ultrafilters} Unless otherwise specified, $M$ denotes a diffuse tracial von Neumann algebra with trace $\tau.$

\begin{defn}
    Let $(M,\tau)$ be a diffuse tracial von Neumann algebra. Fix a countably cofinal ultrafilter $\cU$ (see Definition \ref{cofinal}).  Define $\sim_{M}$ to be the equivalence relation defined on $\cH(M)$ in the following way: we say $u\sim_M v$ if there are $w_1,\ldots,w_n \in \cH(M^\cU)$ such that $[u,w_1] =[w_k,w_{k+1}] = [w_n,v] = 0$ for all $1\leq k < n$. Denote by $\cO_M(u)$ all elements $v\in \cH(M)$ such that $v\sim_M u.$ 
\end{defn}

Now we review basic properties of ultrafilters, with the aim of showing that $\sim_M$ is independent of the choice of (countably cofinal) ultrafilter. This will allow one to determine non-elementary equivalence of factors.

A filter $\mathcal U$ on a set $S$ is a collection of subsets of $S$ that satisfies:
\begin{enumerate}
    \item $\mathcal U$ does not contain the empty set,
    \item if $X, Y \in \mathcal U$ then $X \cap Y \in \mathcal U$, and
    \item if $X\in \mathcal U$ and $X \subset Y \subset S$, then $Y \in \mathcal U$.
\end{enumerate}

An ultrafilter is a maximal filter, which exist by Zorn's Lemma. Ultrafilters have the property that for every subset $X\subset S$, either $X$ or $S\setminus X$ (but not both) is in $\mathcal U$. Every ultrafilter on an infinite set $S$ is either principal (i.e., for some $s\in S$, $\mathcal U = \{X \subset S : s\in X\}$) or free (i.e., $\mathcal U$ contains all cofinite sets $X\subset S$). $\mathcal U$ cannot be free and principal.

For the convenience of the reader, we include a proof of the most basic lifting lemma which is well-known to experts (see for instance \cite{Connes}).

\begin{lem}
    Let $(M,\tau)$ be a tracial von Neumann algebra and $\mathcal U$ an ultrafilter on a set $S$.
    \begin{enumerate}
        \item If $x \in M^{\mathcal U}$ is self-adjoint, then there are self-adjoint elements $x_s \in M$ such that $x = (x_s)_{s\in S}$. 
        \item If $u \in M^{\mathcal U}$ is a unitary, then there are unitaries $u_s \in M$ such that $u = (u_s)_{s\in S}$. 
    \end{enumerate}
\end{lem}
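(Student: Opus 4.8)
The plan is to prove each part separately, using spectral theory and a diagonalization/approximation argument across the index set $S$.

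\textbf{Part (1): self-adjoint lifts.} First I would pick an arbitrary lift $x = (y_s)_{s\in S}$ of the self-adjoint element $x\in M^{\mathcal U}$ with $y_s\in M$; such a lift exists by the definition of the ultraproduct as a quotient of $\ell^\infty(S,M)$. Since $x$ is self-adjoint, $x = \frac{x+x^*}{2}$, so I would simply set $x_s = \frac{y_s + y_s^*}{2}$, which is self-adjoint for every $s$. It remains to check that $(x_s)_s$ still represents $x$ in $M^{\mathcal U}$: this is immediate since $\|x_s - y_s\|_2 = \frac12\|y_s - y_s^*\|_2$ and $\lim_{s\to\mathcal U}\|y_s - y_s^*\|_2 = \|x - x^*\|_2 = 0$. (One should bound $\|y_s\|_\infty$ uniformly first by passing to a set in $\mathcal U$ on which $\|y_s\|_\infty \le \|x\|_\infty + 1$, so that the element of $\ell^\infty(S,M)$ is genuinely bounded; this is routine.)

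\textbf{Part (2): unitary lifts.} Here the naive averaging trick does not work. Instead I would combine Part (1) with the functional calculus. Write $u = e^{ih}$ where $h = h^* \in M^{\mathcal U}$ with spectrum in, say, $[-\pi,\pi]$ (such $h$ exists by Borel functional calculus applied to $u$ in the von Neumann algebra $M^{\mathcal U}$). By Part (1), lift $h$ to a sequence of self-adjoint elements $h_s\in M$; after passing to a set in $\mathcal U$ we may assume $\|h_s\|_\infty \le \pi + 1$ for all $s$. Set $u_s = e^{ih_s} \in \mathcal U(M)$. I would then argue $u = (u_s)_s$ using continuity of the exponential map on $\|\cdot\|_\infty$-bounded sets with respect to $\|\cdot\|_2$: one has $\|e^{ih_s} - e^{ih'}\|_2 \le \|h_s - h'\|_2$ for self-adjoint $h_s, h'$ (from the integral representation $e^{ia}-e^{ib} = i\int_0^1 e^{i(1-t)a}(a-b)e^{itb}\,dt$ together with the fact that conjugation-type operations by unitaries are $\|\cdot\|_2$-isometric), and since $\lim_{s\to\mathcal U}\|h_s - h\|_2 = 0$ in $M^{\mathcal U}$, the representative $(e^{ih_s})_s$ equals $e^{ih} = u$.

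\textbf{Main obstacle.} The only genuine subtlety — and the reason Part (2) is not as trivial as Part (1) — is that polar/exponential corrections are not Lipschitz in $\|\cdot\|_2$ without a uniform $\|\cdot\|_\infty$ bound, so the step of passing to a set $A\in\mathcal U$ where $\|h_s\|_\infty$ is uniformly controlled is essential; one then redefines $h_s = 0$ (or any fixed self-adjoint element) off $A$, which changes nothing modulo $\mathcal J$. Everything else is bookkeeping with the definition of $M^{\mathcal U}$. I would also remark that this lemma is exactly the input needed so that the definition of $\sim_M$ via $w_i \in \mathcal H(M^{\mathcal U})$ is well-posed, though the Haar refinement is the content of Lemma \ref{haar lifting OP} rather than this elementary lemma.
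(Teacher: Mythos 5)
Your proof is correct and follows essentially the same route as the paper: part (1) by replacing an arbitrary lift with its real part, and part (2) by writing $u=e^{ih}$, lifting $h$ self-adjointly via part (1), and exponentiating the lift. The only cosmetic difference is that where you justify $(e^{ih_s})_s = e^{ih}$ via a $\|\cdot\|_2$-Lipschitz estimate for the exponential, the paper simply invokes that continuous functional calculus commutes with the quotient $*$-homomorphism; both are fine.
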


\begin{proof}
    \textcolor{white}{.}

    \begin{enumerate}
        \item Since $x$ is self-adjoint, $\lim_{s\to \mathcal U}\|x_s - x_s^*\|_2 = 0.$ Since $x_s - x_s^* = 2i \Img(x_s)$, we see that $\lim_{s\to\mathcal U}\|\Img(x_s)\|_2 = 0$ too. Therefore $(\Real(x_s))_{s\in S}$ and $(x_s)_{s\in S}$ both represent $x$. The former clearly only consists of self-adjoint elements.
        \item For $u\in M^{\mathcal U}$, write $u = e^{ix}$ for some self-adjoint $x\in M^{\mathcal U}$. Apply part (1) to $x$ to get self-adjoints in $M$ such that $x = (x_s)_s$. Since the continuous functional calculus commutes with all *-homomorphisms, we have that $\exp(i(x_s)_x) = (\exp(ix_s))_s = u.$ \qedhere
    \end{enumerate}
\end{proof}

\begin{defn}\label{cofinal}
    An ultrafilter $\mathcal U$ on a set $S$ is called \emph{countably cofinal} if there exists a sequence $(A_n)_{n\in\N}$ of sets in $\mathcal U$ such that $\cap_{n\geq 1} A_n = \emptyset.$ Otherwise, $\mathcal U$ is called \emph{countably complete}. 
\end{defn}

We remark that if $\mathcal U$ is countably complete, it follows that if $(A_n)_{n\in\N}$ is a sequence of sets in $\cU$ that $\cap_{n\geq 1} A_n \in \cU$. We note that every principal ultrafilter is countably complete and every free ultrafilter on $\N$ is countably cofinal. The existence of free countably complete ultrafilters (necessarily on a set larger than $\N$) is equivalent to the existence of measurable cardinals, a hypothesis independent of ZFC. See Section 5 of \cite{keisler2010ultraproduct}.

The following lemma is Lemma 2.3(2) of \cite{BCI15}.

\begin{lem}
    If $(M,\tau)$ is a separable tracial von Neumann algebra and $\mathcal U$ is countably complete then $M$ is *-isomorphic to $M^{\mathcal U}$.
\end{lem}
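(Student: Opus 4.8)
The plan is to show the diagonal inclusion $\iota\colon M\to M^{\mathcal U}$ is onto; since $\iota$ is always an injective trace-preserving (hence operator-norm isometric) $*$-homomorphism, surjectivity promotes it to a $*$-isomorphism. So I fix $x\in M^{\mathcal U}$, pick a bounded lift $(x_s)_{s\in S}$ with $R:=\sup_s\|x_s\|<\infty$, and try to produce $y\in M$ with $\|y\|\le R$ and $\lim_{s\to\mathcal U}\|x_s-y\|_2=0$; this last condition says exactly $x=\iota(y)$.

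Two ingredients are needed. \emph{(i) $\sigma$-completeness of $\mathcal U$:} if $A\in\mathcal U$ is a countable union $A=\bigcup_{n\in\N}C_n$, then some $C_n\in\mathcal U$ --- otherwise each $S\setminus C_n\in\mathcal U$ and countable completeness forces $\bigcap_n(S\setminus C_n)=S\setminus\bigcup_nC_n\in\mathcal U$, which is impossible since this set misses $A$. (Equivalently, $\mathcal U$ is a $\{0,1\}$-valued countably additive measure on $\mathcal{P}(S)$.) \emph{(ii) Separability:} the operator-norm ball $B_R=\{a\in M:\|a\|\le R\}$ is a \emph{complete separable} metric space in $\|\cdot\|_2$ --- separable because $M$ is, and $\|\cdot\|_2$-complete because a $\|\cdot\|_2$-Cauchy sequence of elements of operator norm at most $R$ converges in $L^2(M,\tau)$ to an element that again lies in the operator-norm $R$-ball of $M$.

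Given these, the plan is to show $\lim_{s\to\mathcal U}x_s$ exists in $B_R$. At the $n$-th stage, cover $B_R$ by countably many $\|\cdot\|_2$-balls of radius $2^{-n}$ and intersect $A_{n-1}\in\mathcal U$ (with $A_0:=S$) with the pullbacks under $s\mapsto x_s$ of these balls; by (i) one of the resulting countably many pieces $A_n\subseteq A_{n-1}$ lies in $\mathcal U$, with associated center $z_n$ satisfying $\{x_s:s\in A_n\}\subseteq\{a:\|a-z_n\|_2<2^{-n}\}$. Picking a point of the nonempty set $A_{n+1}\subseteq A_n$ shows $\|z_n-z_{n+1}\|_2<2^{-n+1}$, so $(z_n)$ is Cauchy and converges by (ii) to some $y\in B_R\subseteq M$; then for every $\eps>0$, taking $n$ with $2^{-n}+\|z_n-y\|_2<\eps$ gives $A_n\subseteq\{s:\|x_s-y\|_2<\eps\}\in\mathcal U$. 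Hence $x=\iota(y)$, and $\iota$ is surjective.

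The genuinely load-bearing step is the use of separability in the nested-ball argument: it is what makes the pushforward $\{0,1\}$-valued measure concentrate at a single point of $B_R$, and without it $\iota$ need not be onto even for countably complete $\mathcal U$. The only place the von Neumann algebra structure enters, beyond having a metric space, is the closedness of the operator-norm ball in $L^2(M,\tau)$, ensuring the limit point $y$ lands back in $M$ with $\|y\|\le R$.
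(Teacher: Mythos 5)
Your proof is correct. The paper does not actually prove this lemma --- it only cites Lemma 2.3(2) of [BCI15] --- and your nested-covering argument (using countable completeness to make $\mathcal U$ a $\sigma$-additive $\{0,1\}$-valued measure, and separability plus $\|\cdot\|_2$-closedness of the operator-norm ball to force the pushforward measure to concentrate at a single point, i.e.\ the ultralimit $\lim_{s\to\mathcal U}x_s$ to exist in $M$) is precisely the standard proof of the cited result, with all the load-bearing steps present and justified.
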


Following \cite{BCI15}, we now consider a countably cofinal ultrafilter $\mathcal U$ on an infinite set $S$. Fix a sequence $(A_n)_{n\geq2}$ of sets in $\mathcal U$ so that $\cap_{n\geq2}A_n = \emptyset$. Set $A_1 = S.$ Define $B_n = \cap_{1\leq k\leq n} A_k$ so that each $B_n \in \mathcal U$, $\cap_{n\geq1}B_n = \emptyset$, and the $B_n$ form a decreasing sequence.

Define a function $f:S\to\mathbb N$ so that $f(s)$ is the largest integer $n$ so that $s \in B_n.$ This is well defined, and $\lim_{s\to\mathcal U} f(s) = \infty.$ Indeed, $B_n = f^{-1}([n,\infty))$.

The following proposition now implies that our equivalence relation is independent of the choice of countably cofinal ultrafilter:

\begin{prop}\label{sim-well-def}
    Let $S$ be an infinite set and $\mathcal U$ a countably cofinal ultrafilter on $S$. Let $(M,\tau)$ be a diffuse tracial von Neumann algebra and $u,v \in \cH(M).$ Then for each $k\geq0$, TFAE:
    \begin{enumerate}
        \item There exist $w_1,\ldots,w_k \in \cH(M^\mathcal U)$ such that, setting $w_0 = u$ and $w_{k+1} = v$, $[w_i,w_{i+1}] = 0$ for all $0\leq i\leq k.$
        \item There exist diffuse unitaries (resp. self-adjoints) $w_1,\ldots,w_k \in M^\cU$ such that, setting $w_0 = u$ and $w_{k+1} = v$, $[w_i,w_{i+1}] = 0$ for all $0\leq i\leq k.$
        \item For all $n\geq 1,$ there exist $w_1,\ldots,w_k \in \mathcal H(M)$ such that $\|[w_i,w_{i+1}]\|_2 < 1/n$ for all $0\leq i\leq k$, where $w_0 = u$ and $w_{k+1} = v$.
        
    \end{enumerate}
    
\end{prop}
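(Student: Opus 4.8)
The plan is to prove $(1)\Rightarrow(3)\Rightarrow(2)\Rightarrow(1)$, since $(2)\Rightarrow(1)$ is essentially immediate (replace a diffuse unitary by the Haar unitary generating a diffuse abelian subalgebra of the abelian von Neumann algebra it generates, and note that commutation is preserved; for the self-adjoint version, pass to a unitary via functional calculus as in Lemma~\ref{lem-diff-lift}), and $(2)\Rightarrow(1)$ together with $(3)\Rightarrow(2)$ will give the cycle. The genuinely substantive direction is the passage from approximate commutation in $M$ (statement (3)) to exact commutation in $M^{\cU}$ for the \emph{given} countably cofinal ultrafilter (statements (1) and (2)), and conversely the extraction of approximate commutants in $M$ itself from an exact solution in $M^{\cU}$.

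For $(1)\Rightarrow(3)$: given $w_1,\dots,w_k\in\cH(M^{\cU})$ witnessing the path, first invoke Lemma~\ref{haar lifting OP} to lift each $w_i$ to a sequence $(w_i^{(s)})_{s\in S}$ of \emph{honest} Haar unitaries in $M$ (using that $\cH(M)$ is SOT-dense in itself, or rather that we can lift a Haar unitary in the ultrapower by Haar unitaries downstairs). Since $[w_i,w_{i+1}]=0$ in $M^{\cU}$ for $0\le i\le k$ (with $w_0=u$, $w_{k+1}=v$, whose lifts we take constant), we have $\lim_{s\to\cU}\|[w_i^{(s)},w_{i+1}^{(s)}]\|_2=0$ for each of the finitely many $i$. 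Hence for each fixed $n$ the set of $s$ where all $k+1$ commutator norms are $<1/n$ lies in $\cU$, in particular is nonempty, so some choice of index $s$ furnishes the required tuple in $\cH(M)$. This uses only finitely many constraints, so countable cofinality is irrelevant here.

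For $(3)\Rightarrow(2)$: this is where countable cofinality enters, exactly as in the setup preceding the proposition following \cite{BCI15}. Fix the decreasing sets $B_n\in\cU$ with empty intersection and the function $f:S\to\N$, $f=f^{-1}([n,\infty))$ on $B_n$, with $\lim_{s\to\cU}f(s)=\infty$. For each $n$, hypothesis (3) gives Haar unitaries $w_1^{(n)},\dots,w_k^{(n)}\in\cH(M)$ with all relevant commutator $2$-norms $<1/n$. Define $w_i\in M^{\cU}$ by $w_i:=(w_i^{(f(s))})_{s\in S}$; these are unitaries (an ultraproduct of unitaries), and since $f\to\infty$ along $\cU$ the commutator estimates force $[w_i,w_{i+1}]=0$ in $M^{\cU}$ for $0\le i\le k$ (again with the constant lifts $w_0=u$, $w_{k+1}=v$). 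It remains to check the $w_i$ can be taken diffuse: by Lemma~\ref{lem-diff-lift} the diffuse unitaries are SOT-dense in $\cU(M)$, so we may perturb each $w_i^{(n)}$ slightly to be diffuse while keeping the commutator bounds (say) $<2/n$, which does not affect the conclusion; alternatively, a Haar unitary is already diffuse, so if one is content with statement (2) in the unitary case no perturbation is needed, and the self-adjoint version follows by writing each diffuse unitary as $e^{ia}$ with $a$ self-adjoint and diffuse and observing that $W^*(a)=W^*(e^{ia})$ when the exponential is chosen with values in a half-open arc of length $<2\pi$, so commutation is preserved.

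The main obstacle is really just bookkeeping: ensuring that the diagonal-along-$f$ element $(w_i^{(f(s))})_s$ genuinely kills \emph{all} the commutators simultaneously — this is where it is essential that a \emph{single} function $f$ tending to infinity works for all $k+1$ constraints at once, which is precisely the content of countable cofinality and is the reason the statement fails for countably complete ultrafilters (where $M^{\cU}\cong M$ and there is no room to build nontrivial ultrapower commutants). A secondary subtlety is the Haar-versus-diffuse interplay in going from (2) back to (1): one must make sure that replacing a diffuse unitary $w$ by a Haar unitary in $W^*(w)$ does not break commutation with its two neighbours, which is automatic because the replacement lies in $W^*(w)\subset\{w_{i-1}\}'\cap\{w_{i+1}\}'$.
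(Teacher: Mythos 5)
Your proposal is correct and follows essentially the same route as the paper: lifting to Haar unitaries in $M$ via Lemma~\ref{haar lifting OP} for the direction from exact commutation in $M^{\cU}$ to approximate commutation in $M$, and the diagonal-along-$f$ construction from countable cofinality for the converse, with the Haar/diffuse interchange handled by passing to a generator of $W^*(w_i)$. The only difference is the ordering of the implications in the cycle, which is immaterial.
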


\begin{proof}
    (1) implies (2) is clear since Haar unitaries are diffuse. (To get a diffuse self-adjoint, write $w_i = e^{i a_i}$ for a self-adjoint $a_i$.)

    (2) implies (1) is because if $w_i$ is diffuse then $W^*(w_i) \simeq L\mathbb Z$ so we can replace $w_i$ with the generator of $L\mathbb Z$.

    (1) implies (3): Take $w_1,\ldots,w_k \in \cH(M^\mathcal U)$ as in (1). Using Lemma \ref{haar lifting OP}, write $w_i = (w_{i,s})_{s\in S}$ where $w_{i,s} \in \mathcal H(M)$ for all $i$ and all $s.$ By definition of the ultrapower, we have that $\lim_{s\in\mathcal U}\|[w_{i,s},w_{i+1,s}]\|_2 = 0$ for all $0\leq i\leq k$ where $w_0 = u$ and $w_{k+1} = v.$ Then given $n\in \mathbb N$ we can find $s_n \in S$ such that for all $0\leq i\leq k$ $\|[w_{i,s_n},w_{i+1,s_n}]\|_2 < 1/n$. In other words, we have (3).

    (3) implies (1): Suppose for each $n\geq 1$ and $1\leq i\leq k$ we have $w_{i,n}\in\cH(M)$ as in (3). Set $w_{0,n} = u$ and $w_{k+1,n} = v$ for all $n.$ Recall the function $f:S\to \mathbb N$ defined before the proposition. For $0\leq i\leq k+1$ and $s\in S$, define $x_{i,s} := w_{i,f(s)} \in \mathcal H(M)$, and define $x_i = (x_{i,s})_{s\in S} \in \mathcal H(M^\mathcal U)$. Then for all $0\leq i\leq k$, we have $\|[x_i,x_{i+1}]\|_2 = \lim_{s\to\mathcal U}\|[x_{i,s},x_{(i+1),s}]\|_2 = \lim_{s\to\mathcal U}\|[w_{i,f(s)},w_{(i+1),f(s)}]\|_2.$ Since $\lim_{s\to\mathcal U}f(s) = \infty,$ this implies that $\lim_{s\to\mathcal U}\|[w_{i,f(s)},w_{(i+1),f(s)}]\|_2 = \lim_{n\to\infty}\|[w_{i,n},w_{(i+1),n}]\|_2 = 0.$ Therefore, $[x_i,x_{i+1}] = 0$ for all $0\leq i\leq k$. 
\end{proof}

We can also prove a saturation result, saying that the relation $\sim_M$ is stable under taking iterated ultrapowers. We first define a variant of $\sim_M$ with no reference to an ultrapower.

\begin{defn}\label{eq-rel-no-uf}
        Let $(M,\tau)$ be a diffuse tracial von Neumann algebra. Denote by $\sim_{M}^0$ the equivalence relation defined on $H(M)$ in the following way: we say $u\sim_{M}^0 v$ if there are $w_1,\ldots,w_n \in \cH(M)$ such that $[u,w_1] =[w_k,w_{k+1}] = [w_n,v] = 0$ for all $1\leq k < n$. 
\end{defn}

\begin{prop}
\label{prop-u.p-of-u.p}
    If $\mathcal U$ is a countably cofinal ultrafilter on $S$ then $\sim_{M^\mathcal U}^0 = \sim_{M^\mathcal U}$.
\end{prop}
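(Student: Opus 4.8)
The plan is to prove the two inclusions separately. The inclusion $\sim_{M^\cU}^0 \subseteq \sim_{M^\cU}$ is immediate: any sequentially commuting path of Haar unitaries lying inside $M^\cU$ is also such a path inside $(M^\cU)^\cV$ via the diagonal embedding, for any countably cofinal ultrafilter $\cV$, and by Proposition \ref{sim-well-def} the relation $\sim_{M^\cU}$ does not depend on which such $\cV$ one uses. So the whole content is the reverse inclusion, and the strategy there is: first turn a path living in $(M^\cU)^\cV$ into a sequence of arbitrarily good approximate paths living in $M^\cU$ itself, then collapse that sequence down to one honest path in $M^\cU$ by diagonalizing over the index set of $\cU$.

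For the first step, note that $M^\cU$ is again a diffuse tracial von Neumann algebra, so I can apply Proposition \ref{sim-well-def} with $M^\cU$ in the role of the base algebra and a free ultrafilter on $\N$ in the role of the ultrafilter. If $u\sim_{M^\cU} v$, the equivalence (1)$\Leftrightarrow$(3) there yields a single length $k$ such that for every $n\geq 1$ there are $w_1^{(n)},\dots,w_k^{(n)} \in \cH(M^\cU)$ with $\|[w_i^{(n)},w_{i+1}^{(n)}]\|_2 < 1/n$ for all $0\leq i\leq k$, where $w_0^{(n)} := u$ and $w_{k+1}^{(n)} := v$. For the second step I exploit that $M^\cU$ is literally an ultrapower over the countably cofinal $\cU$: fix lifts $u=(u_s)_s$, $v=(v_s)_s$, $w_i^{(n)}=(w_{i,s}^{(n)})_s$ with all coordinates Haar unitaries in $M$ (Lemma \ref{haar lifting OP}, whose proof applies to arbitrary ultrafilters), so that $A_n := \{\, s : \|[w_{i,s}^{(n)},w_{i+1,s}^{(n)}]\|_2 < 1/n \ \forall\, 0\leq i\leq k\,\} \in \cU$. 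Choosing a decreasing sequence $(B_n)_n$ in $\cU$ with $\bigcap_n B_n = \emptyset$ and putting $D_n := B_n \cap A_1 \cap \cdots \cap A_n \in \cU$, the $D_n$ decrease to $\emptyset$, so $g(s) := \sup\{\, n : s\in D_n \,\}$ is a well-defined map $S\to\N\cup\{0\}$ with $g^{-1}([n,\infty)) = D_n \in \cU$, whence $\lim_{s\to\cU} g(s) = \infty$. Setting $x_{i,s} := w_{i,s}^{(g(s))}$ when $g(s)\geq 1$ (and anything Haar otherwise) and $x_i := (x_{i,s})_s$, one gets $x_i \in \cH(M^\cU)$, and on the $\cU$-large set $D_N$ one has $g(s)\geq N$ and $s\in A_{g(s)}$, so $\|[x_{i,s},x_{(i+1),s}]\|_2 < 1/g(s) \leq 1/N$; letting $N\to\infty$ forces $[x_i,x_{i+1}]=0$ exactly for all $0\leq i\leq k$ (with $x_0=u$, $x_{k+1}=v$). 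Thus $x_1,\dots,x_k$ is a sequentially commuting path of Haar unitaries in $M^\cU$ from $u$ to $v$, i.e.\ $u\sim_{M^\cU}^0 v$.

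The step I expect to need the most care is this final collapsing argument: it is precisely the countable cofinality of $\cU$ that upgrades ``commutators of norm $<1/n$'' to ``commutators equal to $0$'' in $M^\cU$ — for a countably complete $\cU$ one would have $M^\cU\cong M$ and no reason for the two relations to coincide — so the bookkeeping around the sets $D_n$ and the function $g$ must be carried out honestly, exactly in the spirit of the proof of Proposition \ref{sim-well-def}. Conceptually this is nothing but the $\aleph_1$-saturation of $M^\cU$ realizing the type ``$x_1,\dots,x_k$ are Haar unitaries and $u,x_1,\dots,x_k,v$ sequentially commute'', which the family $(w_i^{(n)})_n$ shows to be finitely approximately satisfiable over the parameters $u,v$; one should also record the routine checks that each $x_i$ is genuinely Haar ($\tau(x_i^m) = \lim_{s\to\cU}\tau(x_{i,s}^m) = 0$ since every coordinate is) and that the two endpoints are controlled by the same sets $A_n$.
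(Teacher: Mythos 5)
Your proof is correct, and it reaches the same key mechanism as the paper's --- a diagonalization over $S$ made possible by a decreasing sequence of $\cU$-sets with empty intersection --- but it gets there by a noticeably different route. The paper unwinds $u\sim_{M^\cU}v$ through the identification $(M^\cU)^\cV\cong M^{\cU\rtimes\cV}$ (the two lemmas quoted from \cite{capraro2012product}), obtaining data indexed by $S\times T$ and a set $C_n\in\cU\rtimes\cV$ containing $\bigcup_{s\in A_n}\{s\}\times B_{s,n}$; the collapse is then performed by selecting one point $t_s$ in the fiber over each $s$. You instead avoid the product ultrafilter entirely: you first apply Proposition \ref{sim-well-def} (the equivalence (1)$\Leftrightarrow$(3)) with $M^\cU$ itself as the base algebra, compressing the second ultrapower into a sequence, indexed by $n\in\N$, of approximate paths living in $\cH(M^\cU)$, and only then lift to $M$ and diagonalize over $S$ via your function $g$. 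This buys a cleaner argument that needs no facts about $\cU\rtimes\cV$, at the cost of invoking Proposition \ref{sim-well-def} for the nonseparable algebra $M^\cU$ (which is legitimate, as that proposition is stated for arbitrary diffuse tracial von Neumann algebras) and of an extra application of the Haar lifting lemma; your parenthetical that Lemma \ref{haar lifting OP} works for arbitrary (countably cofinal) ultrafilters on arbitrary index sets is correct and is in any case already implicit in the paper's own use of that lemma here. Your closing remark identifying the whole argument as $\aleph_1$-saturation realizing a finitely approximately satisfiable type, and your observation that countable cofinality is exactly what fails in the countably complete case, match the paper's emphasis.
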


\begin{defn}
    If $\mathcal U$ is an ultrafilter on $S$ and $\mathcal V$ is an ultrafilter on $T$ then $\mathcal U \rtimes \mathcal V$ is defined by $C \in\cU\rtimes\cV $ if and only if $\{s : \{t: (s,t) \in X\} \in \cV\}\in \cU$. 
\end{defn}

We observe that the above definition yields an ultrafilter on $S\times T$. We remark that there is not a symmetric notion of tensor product of ultrafilters that gives ultrafilters except in certain scenarios involving large cardinals. See Section 1 of \cite{goldberg2021products} for more details.

The following lemma appears as Theorem 1.3 in \cite{capraro2012product}.

\begin{lem}
    If $\mathcal U$ is an ultrafilter on $S$ and $\mathcal V$ is an ultrafilter on $T$ and $f:S\times T \to \mathbb C$ is a bounded function then $$\lim_{s\to\mathcal U} \lim_{t\to \mathcal V} f(s,t) = \lim_{(s,t)\to\mathcal U\rtimes \mathcal V} f(s,t).$$
\end{lem}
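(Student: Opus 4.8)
The plan is to prove the two limits agree by an elementary argument directly from the definitions of $\mathcal U \rtimes \mathcal V$ and of ultrafilter limits of bounded functions. Recall that for a bounded function $g$ on a set $X$ and an ultrafilter $\mathcal W$ on $X$, the quantity $L = \lim_{x \to \mathcal W} g(x)$ is characterized by: for every $\eps > 0$, the set $\{x : |g(x) - L| < \eps\}$ belongs to $\mathcal W$; and such an $L$ exists and is unique since $\mathbb C$ (or $\mathbb R$, after splitting into real and imaginary parts) is complete and $g$ is bounded. I would first reduce to the real-valued case by treating real and imaginary parts separately, so that all the limits in question are genuine limits of bounded real sequences/nets and exist unconditionally.

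First I would set $h(s) := \lim_{t \to \mathcal V} f(s,t)$, which is a well-defined bounded function on $S$ since $f$ is bounded, and then set $L := \lim_{s \to \mathcal U} h(s)$. The goal is to show $L = \lim_{(s,t) \to \mathcal U \rtimes \mathcal V} f(s,t)$, which by the uniqueness of ultrafilter limits amounts to checking that for every $\eps > 0$ the set $C_\eps := \{(s,t) \in S \times T : |f(s,t) - L| < \eps\}$ belongs to $\mathcal U \rtimes \mathcal V$. By definition of the product ultrafilter, $C_\eps \in \mathcal U \rtimes \mathcal V$ iff $\{s \in S : \{t \in T : (s,t) \in C_\eps\} \in \mathcal V\} \in \mathcal U$, i.e. iff $\{s : \{t : |f(s,t) - L| < \eps\} \in \mathcal V\} \in \mathcal U$.

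The key step is the following: fix $\eps > 0$. Since $L = \lim_{s \to \mathcal U} h(s)$, the set $A := \{s \in S : |h(s) - L| < \eps/2\}$ lies in $\mathcal U$. For each $s \in A$, since $h(s) = \lim_{t \to \mathcal V} f(s,t)$, the set $B_s := \{t \in T : |f(s,t) - h(s)| < \eps/2\}$ lies in $\mathcal V$; and for $s \in A$ and $t \in B_s$ the triangle inequality gives $|f(s,t) - L| < \eps$, so $B_s \subseteq \{t : |f(s,t) - L| < \eps\}$, whence $\{t : |f(s,t) - L| < \eps\} \in \mathcal V$ by upward closure of the filter $\mathcal V$. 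Therefore $A \subseteq \{s : \{t : |f(s,t) - L| < \eps\} \in \mathcal V\}$, and since $A \in \mathcal U$, upward closure of $\mathcal U$ gives $\{s : \{t : |f(s,t) - L| < \eps\} \in \mathcal V\} \in \mathcal U$, which is exactly what was needed. As $\eps > 0$ was arbitrary, this identifies $L$ as the $\mathcal U \rtimes \mathcal V$-limit of $f$, completing the proof.

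I do not expect any serious obstacle here: the only mild subtlety is making sure the nested limits exist (handled by boundedness plus completeness, passing to real and imaginary parts), and being careful that $\mathcal U \rtimes \mathcal V$ really is an ultrafilter on $S \times T$ — but this is already noted in the excerpt immediately after the definition, so I would simply cite that observation. If one prefers to avoid invoking existence of the limits, one can phrase the whole argument with $\limsup$ and $\liminf$ along ultrafilters, but since $f$ is bounded these coincide and the cleaner formulation above suffices.
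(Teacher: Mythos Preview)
Your argument is correct. The paper itself does not give a proof of this lemma; it simply records it as Theorem~1.3 of \cite{capraro2012product}. Your $\eps/2$ argument directly from the definition of $\mathcal U \rtimes \mathcal V$ and the characterization of ultrafilter limits is the standard elementary proof, and it goes through without any gaps.
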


The following lemma appears as Theorem 2.1 in \cite{capraro2012product}.

\begin{lem}
    $(M^\mathcal U)^ \mathcal V \simeq M^{\mathcal U \rtimes \mathcal V}$.
\end{lem}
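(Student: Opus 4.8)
The plan is to realize the isomorphism already at the level of bounded sequences and only then pass to quotients. Write $M^{\mathcal U} = \ell^\infty(S,M)/\mathcal J_{\mathcal U}$, where $\mathcal J_{\mathcal U}$ is the closed two-sided ideal of sequences that are $\|\cdot\|_2$-null along $\mathcal U$, and similarly for the other ultrapowers. First I would introduce the obvious ``currying'' $\ast$-isomorphism $\Phi_0 : \ell^\infty(S\times T, M)\to \ell^\infty\big(T,\ell^\infty(S,M)\big)$, $(x_{s,t})_{(s,t)}\mapsto\big(t\mapsto(x_{s,t})_{s\in S}\big)_{t\in T}$, whose inverse is uncurrying. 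Composing $\Phi_0$ with the map $\ell^\infty\big(T,\ell^\infty(S,M)\big)\to\ell^\infty(T,M^{\mathcal U})$ obtained by applying the quotient $\ell^\infty(S,M)\to M^{\mathcal U}$ coordinatewise in $t$, and then with the outer quotient $\ell^\infty(T,M^{\mathcal U})\to(M^{\mathcal U})^{\mathcal V}$, yields a $\ast$-homomorphism $\Theta:\ell^\infty(S\times T,M)\to(M^{\mathcal U})^{\mathcal V}$. The claim is that $\Theta$ is surjective with $\ker\Theta=\mathcal J_{\mathcal U\rtimes\mathcal V}$, so that $\Theta$ descends to the desired isomorphism.

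For surjectivity I would use that norm-bounded elements of a tracial ultrapower admit uniformly norm-controlled lifts: if $a=a^*\in M^{\mathcal U}$ with $\|a\|\leq R$, then clamping a self-adjoint lift via the continuous function $t\mapsto\max(-R,\min(R,t))$ — legitimate since continuous functional calculus commutes with the quotient $\ast$-homomorphism $\ell^\infty(S,M)\to M^{\mathcal U}$ — produces a lift of norm $\leq R$, and the general case follows by splitting into real and imaginary parts (or the $2\times2$ matrix trick). Thus, given $z\in(M^{\mathcal U})^{\mathcal V}$, lift it to $(a_t)_t\in\ell^\infty(T,M^{\mathcal U})$ and then lift each $a_t$ to $(x_{s,t})_s\in\ell^\infty(S,M)$ with a norm bound independent of $t$; then $(x_{s,t})_{(s,t)}\in\ell^\infty(S\times T,M)$ and $\Theta\big((x_{s,t})\big)=z$.

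Next I would compute the kernel. Unwinding $\Theta$, an element $(x_{s,t})$ lies in $\ker\Theta$ iff $\lim_{t\to\mathcal V}\big\|(x_{s,t})_{s}\big\|_{2,M^{\mathcal U}}=0$; since the trace on $M^{\mathcal U}$ is the $\mathcal U$-limit of $\tau$, one has $\big\|(x_{s,t})_{s}\big\|_{2,M^{\mathcal U}}^2=\lim_{s\to\mathcal U}\|x_{s,t}\|_2^2$, so the condition reads $\lim_{t\to\mathcal V}\lim_{s\to\mathcal U}\|x_{s,t}\|_2=0$. Applying the iterated-limit lemma (the stated consequence of Theorem 1.3 of \cite{capraro2012product}) to the bounded function $(s,t)\mapsto\|x_{s,t}\|_2$ converts this iterated limit into a single limit along the Fubini product of $\mathcal U$ and $\mathcal V$ on $S\times T$, so $\ker\Theta$ is exactly the defining null ideal of $M^{\mathcal U\rtimes\mathcal V}$. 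Hence $\Theta$ descends to a $\ast$-isomorphism $M^{\mathcal U\rtimes\mathcal V}\xrightarrow{\ \sim\ }(M^{\mathcal U})^{\mathcal V}$. Finally this map is trace-preserving: the trace on $M^{\mathcal U\rtimes\mathcal V}$ sends $(x_{s,t})\mapsto\lim_{(s,t)\to\mathcal U\rtimes\mathcal V}\tau(x_{s,t})$, its $\Theta$-image has trace $\lim_{t\to\mathcal V}\lim_{s\to\mathcal U}\tau(x_{s,t})$, and these agree by the same lemma applied to $(s,t)\mapsto\tau(x_{s,t})$; a trace-preserving $\ast$-isomorphism between tracial von Neumann algebras is automatically normal, so this is an isomorphism of tracial von Neumann algebras.

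The step requiring the most care — though it is bookkeeping rather than a genuine obstacle — is getting the order of the Fubini product right: iterated ultralimits do not commute, so one must curry $\Phi_0$ in the order matching the convention for $\mathcal U\rtimes\mathcal V$ fixed above and in the iterated-limit lemma, after which all the limit identities line up. Everything else is routine $\mathrm C^*$- and von-Neumann-algebraic bookkeeping, the only non-formal input being the existence of the norm-controlled lifts used for surjectivity.
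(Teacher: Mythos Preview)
The paper does not give its own proof of this lemma; it simply cites it as Theorem~2.1 of \cite{capraro2012product}. Your argument is exactly the standard proof of this folklore fact: set up the currying map on bounded sequences, push it through the two quotients, identify the kernel via the iterated-limit lemma, and handle surjectivity with norm-controlled lifts. So there is nothing to compare---you have supplied what the paper omits.

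One genuine point of care that you rightly flag: with the paper's stated convention $C\in\mathcal U\rtimes\mathcal V\iff\{s:\{t:(s,t)\in C\}\in\mathcal V\}\in\mathcal U$, the iterated-limit lemma reads $\lim_{(s,t)\to\mathcal U\rtimes\mathcal V}=\lim_{s\to\mathcal U}\lim_{t\to\mathcal V}$, whereas your kernel computation for $(M^{\mathcal U})^{\mathcal V}$ correctly produces $\lim_{t\to\mathcal V}\lim_{s\to\mathcal U}$. These do not match on the nose, so under these exact conventions the natural isomorphism is really $(M^{\mathcal V})^{\mathcal U}\simeq M^{\mathcal U\rtimes\mathcal V}$ (equivalently $(M^{\mathcal U})^{\mathcal V}\simeq M^{\mathcal V\rtimes\mathcal U}$). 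This is purely notational---different sources order the Fubini product differently---and your closing caveat about matching the curry to the convention is exactly the right disclaimer; just be aware that as literally written in this paper the orders are transposed.
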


\begin{proof}[Proof of Proposition \ref{prop-u.p-of-u.p}]
    It is readily shown from the definitions and Lemma \ref{haar lifting OP} that $u\sim_{M^\mathcal U}^0 v$ is equivalent to:
    \begin{enumerate}
        \item There exists $k\geq0$ and $w_{i,s} \in \mathcal H(M)$ (where $w_{0,s} = u_s$ and $w_{k+1,s} = v_s$ come from the lifts of $u$ and $v$) such that for all $n\geq 1,$ there is $A_n \in \mathcal U$ such that for all $s\in A_n$, $\|[w_{i,s},w_{i+1,s}]\|_2 < 1/n$.
    \end{enumerate}
    Similarly, $u\sim_{M^\mathcal U} v$ is equivalent to:
    \begin{enumerate}
        \item[(2)] There is a countably cofinal ultrafilter $\cV$ on a set $T$ such that there exists $k\geq0$ and $x_{i,(s,t)} \in \mathcal H(M)$ (where $x_{0,(s,t)} = u_s$ and $x_{k+1,(s,t)} = v_s$ come from the lifts of $u$ and $v$) such that for all $n\geq 1,$ there is $C_n \in \mathcal U \rtimes \mathcal V$ such that for all $(s,t)\in C_n$, $\|[x_{i,(s,t)},x_{i+1,(s,t)}]\|_2 < 1/n$.
    \end{enumerate}

    Let us now prove that (1) and (2) are equivalent.

    If (1) holds, then pick any countably cofinal ultrafilter $\cV$ on a set $T$. Set $C_n = A_n\times T$ and $x_{i,(s,t)} = w_{i,s}$. Then (2) holds.

    Conversely, we will need to use the cofinality of $\mathcal U.$ Assume (2) holds. There are $A_n \in \mathcal U$ and $B_{s,n} \in \mathcal V$ such that $\cup_{s\in A_n}\{s\}\times B_{s,n}\subset C_n$ Since $\mathcal U$ is a filter, it is closed under intersections, and so we can choose the $A_n$ to be a decreasing sequences. Furthermore, since $\mathcal U$ is countably cofinal, we may take $A_n$ such that $\cap_n A_n = \emptyset$. Then for each $s\in A_1,$ there is a unique $n\in\mathbb N$ such that $s \in A_n\setminus A_{n-1}$. Take $t_s$ to be an element of $B_{s,n}.$ Now set $w_{i,s} = x_{i,(s,t_s)}$. With this choice, (1) holds.
    
\end{proof}

We emphasize that $\cU$ being countably cofinal in the above proof of (2) implies (1) is crucial. Indeed, if $s\in \cap_n A_n$ there is no good way to choose $t_s.$ This is because we would have to have some $t_s$ where $\|[x_{i,(s,t_s)},x_{i+1,(s,t_s)}]\|_2$ is exactly 0.

Henceforth, we will assume $\cU$ is a free (and therefore countably cofinal) ultrafilter on $\N$ unless otherwise specified. This assumption is justified by Proposition \ref{sim-well-def}.

\subsection{Basic properties of $\sim_M$}

In this article we will interchangeably use the words ``orbit'' and ``equivalence classes''. It is immediate to observe that $\cO_M(u)$ is closed under adjoints, inverses, conjugation by elements in $\cO_M(u)$, and nonzero integer powers. 
The equivalence relation $\sim_M$ is not countable but we will show in this subsection that it is Borel. In fact, $\sim_M$ is $F_\sigma.$

We will need the following notation. 
\begin{defn}\label{def-path}
    Let $u,v \in \cH(M).$ Then $p_M(u,v)$ denotes the length of the shortest possible ``commutator path'' between $u$ and $v$ in $M^\cU.$ We define $p_M(u,u) = 0.$ If $[u,v]=0$ and $u\neq v$ then $p_M(u,v) = 1.$ If there is $w\in \cH(M^\cU)$ such that $[u,w]=[w,v]=0$ then $p_M(u,v)\leq2.$ If $u\not\sim_M v$ then $p_M(u,v) = \infty.$
\end{defn}

We will need the following key ultrapower density lemma in some of our theorems: 
\begin{lem}\label{density lemma}
    If $p_M(u_n,v_n) \leq K $ for all $n\in\mathbb N,$ $u_n\to u \in \cH(M)$ in 2-norm, and $v_n\to v\in \cH(M)$ in 2-norm, then $p_M(u,v) \leq K$ too.
\end{lem}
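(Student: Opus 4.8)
The statement is a diagonalization/saturation argument entirely parallel to the proof of Proposition~\ref{sim-well-def}, and I would prove it by reducing everything to the ``approximate'' characterization (3) of that proposition. First I would fix $K$ and observe that it suffices to treat $K < \infty$ (the case $K = \infty$ is vacuous). For each $n$ we have $p_M(u_n, v_n) \le K$, so by the equivalence (1)$\Leftrightarrow$(3) of Proposition~\ref{sim-well-def} there is some $k_n \le K$ and, for every $m \ge 1$, a tuple of Haar unitaries $w_1^{(n,m)}, \ldots, w_{k_n}^{(n,m)} \in \cH(M)$ with $\|[w_i^{(n,m)}, w_{i+1}^{(n,m)}]\|_2 < 1/m$ for all $0 \le i \le k_n$, where $w_0^{(n,m)} = u_n$ and $w_{k_n+1}^{(n,m)} = v_n$. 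Since $k_n \le K$ takes only finitely many values, after passing to a subsequence I may assume $k_n \equiv k$ is constant (this is harmless: if $p_M(u,v) \le k$ along a subsequence then certainly $p_M(u,v) \le K$). Actually, a cleaner move: pad each short path to length exactly $K$ by repeating the last unitary $w_{k_n}^{(n,m)}$, which keeps all consecutive commutators $0$ or $<1/m$; so WLOG every path has length exactly $K$.

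\textbf{Key steps.} Now the goal is to produce, for each fixed $m \ge 1$, a single tuple $W_1, \ldots, W_K \in \cH(M)$ with $\|[W_i, W_{i+1}]\|_2 < 1/m$ where $W_0 = u$ and $W_{K+1} = v$; by (3)$\Rightarrow$(1) of Proposition~\ref{sim-well-def} this yields $p_M(u,v) \le K$. Fix $m$. For the endpoints I have $u_n \to u$ and $v_n \to v$ in $\|\cdot\|_2$, so choose $n$ large enough that $\|u_n - u\|_2 < \tfrac{1}{3m}$ and $\|v_n - v\|_2 < \tfrac{1}{3m}$. For this $n$, choose $m'$ large enough (e.g. $m' > 3m$) and set $W_i := w_i^{(n,m')}$ for $1 \le i \le K$. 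Then for interior indices $1 \le i \le K-1$, $\|[W_i, W_{i+1}]\|_2 = \|[w_i^{(n,m')}, w_{i+1}^{(n,m')}]\|_2 < 1/m' < 1/m$. For the two boundary commutators: $\|[u, W_1]\|_2 \le \|[u - u_n, W_1]\|_2 + \|[u_n, W_1]\|_2 \le 2\|u - u_n\|_2 \cdot \|W_1\|_\infty + 1/m' < \tfrac{2}{3m} + \tfrac{1}{3m} = \tfrac{1}{m}$ (using that $W_1$ is a unitary, so $\|W_1\|_\infty = 1$, and $\|[u_n, W_1]\|_2 = \|[w_0^{(n,m')}, w_1^{(n,m')}]\|_2 < 1/m'$); symmetrically $\|[W_K, v]\|_2 < 1/m$. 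This gives the desired tuple for each $m$, hence via Proposition~\ref{sim-well-def} the conclusion $p_M(u,v) \le K$.

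\textbf{Main obstacle.} There is no deep obstacle here — the proof is a routine interleaving of the triangle inequality for the commutator (using that conjugating/multiplying by unitaries is $\|\cdot\|_2$-isometric, so $\|[a,b] - [a',b]\|_2 \le 2\|a - a'\|_2$ when $b$ is unitary) with the approximate-path criterion already established. The only point requiring a moment's care is the bookkeeping: one must perform the limiting argument $m$ at a time rather than trying to build a single ultrapower path directly from the $u_n$'s, because the $u_n$ only converge to $u$ and are not equal to it, so one cannot lift to a genuinely commuting ultrapower element along the constant-$n$ direction. Equivalently, one exploits that characterization (3) of Proposition~\ref{sim-well-def} is purely about finite approximate data in $M$ itself, which is automatically stable under the $\|\cdot\|_2$-perturbation of the endpoints; the passage back to the ultrapower formulation (1) is then free. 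I would also note explicitly that the padding trick and the finiteness of $\{k_n\}$ are what let us fix a uniform path length $K$, which is essential for (3)$\Rightarrow$(1) to apply.
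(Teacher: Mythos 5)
Your proof is correct and is essentially the same diagonalization argument as the paper's: the paper lifts each ultrapower path via Lemma \ref{haar lifting OP} and extracts a diagonal sequence of approximate paths in $M$, which is exactly the content of the (1)$\Leftrightarrow$(3) equivalence of Proposition \ref{sim-well-def} that you invoke, followed by the same $2\|u-u_n\|_2$ commutator perturbation at the endpoints. Your explicit padding of shorter paths to uniform length $K$ is a detail the paper glosses over but is handled correctly.
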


\begin{proof}
    Without loss of generality, assume that $\|u_n-u\|_2 < 1/n$ and $\|v_n-v\|_2 < 1/n$ for all $n.$

    By the first assumption about path length, we know that for all $n$ there exist Haar unitaries $w_{1,n},\ldots,w_{K,n} \in \cH(M^\cU)$ such that for all $1\leq i\leq K-1$
    $$[u_n,w_{1,n}] = [w_{i,n},w_{i+1,n}] = [w_{K,n},v_n] = 0. $$

    By Lemma \ref{haar lifting OP}, we may write $w_{i,n} = (w_{i,n,m})_m$ where $w_{i,n,m} \in \mathcal H(M)$ are all Haar unitaries in $M.$ Define $w_{0,n} = w_{0,n,m} = u_n$ and $w_{K+1,n} = w_{K+1,n,m} = v_n$.

    By the ultrapower construction, we now know that for all $n\in\mathbb N,$ $0\leq i \leq K+1$, and $0\neq j\in\mathbb Z$,
    \begin{align*}
        \lim_{m\to\cU} \|[w_{i,n,m},w_{n,i+1,m}]\|_2 = 0
    \end{align*}
    Therefore for each $n\in\mathbb N$ there is $m(n) \in \mathbb N$ such that for all $0\leq i\leq K+1$,
    \begin{align*}
        \|[w_{i,n,m(n)},w_{i+1,n,m(n)}]\|_2 < 1/n.
    \end{align*}
    For $1\leq i\leq K,$ now set $w_i = (w_{i,n,m(n)})_{n\in\N}$. This is clearly a Haar unitary in $M^\cU$ since each $w_{i,n,m(n)}$ is a Haar unitary. Also, $\|[w_i,w_{i+1}]\|_2 = \lim_{n\to\cU}\|[w_{i,n,m(n)},w_{i+1,n,m(n)}]\|_2 = 0$ and hence $[w_i,w_{i+1}] = 0$ for all $1\leq i\leq K.$

    Now, fix $\varepsilon > 0.$ Pick $n$ such that $3/n < \varepsilon.$ Then 
    \begin{align*}
        \|[u,w_{1,n,m(n)}]\|_2 &\leq \|[u-u_n,w_{1,n,m(n)}]\|_2 + \|[u_n,w_{1,n,m(n)}]\|_2\\
        &\leq 2\|u-u_n\|_2\|w_{1,n,m(n)}\| + \|[w_{0,n,m(n)},w_{1,n,m(n)}]\|_2\\
        &< 2\|u-u_n\|_2\|w_{1,n,m(n)}\| + 1/n < 3/n < \varepsilon.
    \end{align*}
    Therefore $\|[u,w_1]\|_2 = \lim_{n\to\cU}\|[u,w_{1,n,m(n)}]\|_2 = 0.$ Similarly, $v$ commutes with $w_K.$

    Hence there is a path of length $K$ from $u$ to $v$ via $w_1,\ldots,w_K$, witnessing that $p_M(u,v) \leq K.$
\end{proof} 

\begin{prop}\label{Borel}
    The equivalence relation $\sim_M$ is a Borel equivalence relation on $\cH(M).$ Moreover, $\sim_M$ is $F_\sigma.$ Moreover, if $\fR(M) < \infty,$ then $\sim_M$ is closed.
\end{prop}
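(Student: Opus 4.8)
The plan is to exhibit $\sim_M$ explicitly as a countable union of closed sets inside $\cH(M) \times \cH(M)$, where $\cH(M)$ carries the strong operator topology (a Polish topology on bounded sets, or one works with the $\|\cdot\|_2$-topology, which agrees with SOT on the unit ball). The key is Proposition \ref{sim-well-def}: condition (3) there says that $u \sim_M v$ via a path of length $k$ if and only if \emph{for every} $n \geq 1$ there exist $w_1, \dots, w_k \in \cH(M)$ with $\|[w_i, w_{i+1}]\|_2 < 1/n$ for all $0 \leq i \leq k$ (with $w_0 = u$, $w_{k+1} = v$). This is a purely internal characterization, with no reference to the ultrapower, so I would use it to bypass any measurability issues coming from $M^\cU$.

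The first step is to define, for each $k \geq 0$, the set
\[
R_k = \{(u,v) \in \cH(M) \times \cH(M) : p_M(u,v) \leq k\}.
\]
By Proposition \ref{sim-well-def}, $(u,v) \in R_k$ iff for all $n$ there exist $w_1, \dots, w_k \in \cH(M)$ with $\|[w_i, w_{i+1}]\|_2 < 1/n$ for all $i$. Now I claim $R_k$ is closed: this is exactly the content of the density Lemma \ref{density lemma} — if $(u_n, v_n) \in R_k$ and $u_n \to u$, $v_n \to v$ in $\|\cdot\|_2$, then $(u,v) \in R_k$. (One should note $\cH(M)$ itself is closed in $\cU(M)$ in $\|\cdot\|_2$, since the conditions $\tau(u^j) = 0$ are $\|\cdot\|_2$-continuous; so $R_k$ is closed as a subset of $\cH(M) \times \cH(M)$.) Then $\sim_M \;=\; \bigcup_{k \geq 0} R_k$ is a countable union of closed sets, hence $F_\sigma$, hence Borel. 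For the last assertion: if $\fR(M) = \sup_{u \sim_M v} p_M(u,v) = K < \infty$, then whenever $u \sim_M v$ we have $p_M(u,v) \leq K$, so $\sim_M = R_K$, which is closed.

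The main obstacle — and really the only substantive point — is verifying that $R_k$ is closed, i.e. invoking Lemma \ref{density lemma} correctly; but that lemma is already proved in the excerpt, so here it is just a matter of citing it and observing that $R_k$ is precisely $\{p_M \leq k\}$. A minor technical care-point is the choice of topology: one should confirm that SOT restricted to the (bounded) set of unitaries is metrizable and separable so that ``Borel'' and ``$F_\sigma$'' are meaningful, and that $\cH(M)$ is a Borel (indeed $G_\delta$, being an intersection of the closed sets $\{\tau(u^j) = 0\}$, hence actually closed) subset; on the unit ball SOT coincides with the $\|\cdot\|_2$-topology, which is exactly the one used throughout Lemma \ref{density lemma} and Proposition \ref{sim-well-def}, so everything is consistent. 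No further ingredients are needed.
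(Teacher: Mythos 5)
Your proposal is correct and follows exactly the paper's argument: decompose $\sim_M$ as the union of the sets $\{(u,v): p_M(u,v)\le k\}$, each closed by Lemma \ref{density lemma}, hence $\sim_M$ is $F_\sigma$, and equals a single such closed set when $\fR(M)<\infty$. The additional remarks on the topology and on Proposition \ref{sim-well-def} are fine but not needed beyond what the paper records.
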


\begin{proof}
    Define $\sim_{M,k} = \{(u,v)\in\cH(M)\times\cH(M) : p_M(u,v)\leq k\}$. By Lemma \ref{density lemma}, we have that $\sim_{M,k}$ is closed. Clearly $\sim_M$ is the union of $\sim_{M,k}$ for all $k\geq1,$ showing that $\sim_M$ is $F_\sigma$ and thus Borel.
\end{proof}

\section{Single orbit and  property Gamma}

\begin{prop}
\label{prop-gamma-has-1-orbit}
    If $M$ is a II$_1$ factor with property Gamma then $\sim_M$ has one orbit.
\end{prop}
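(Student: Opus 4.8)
The plan is to establish the (formally stronger) statement that $p_M(u,v)\le 2$ for every pair $u,v\in\cH(M)$, which in particular forces $\sim_M$ to have a single orbit. The strategy is to produce one fixed Haar unitary $w\in\cH(M^\cU)$ that commutes with all of $M$, and then to use the length-two path $u,w,v$: since $u,v\in M$ and $w\in M'\cap M^\cU$, we get $[u,w]=[w,v]=0$, so $u\sim_M v$ (and $p_M(u,v)\le 2$ by Definition \ref{def-path}).

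Thus everything reduces to finding a Haar unitary inside the central sequence algebra $M'\cap M^\cU$. The first input is the hypothesis: by the definition of property Gamma, $M'\cap M^\cU\neq\C1$. The second input is the standard fact that the central sequence algebra of a II$_1$ factor, once it is nontrivial, has no minimal projections and is therefore diffuse (see, e.g., \cite{Connes, McD1}); any diffuse tracial von Neumann algebra contains a copy of $L(\Z)$, whose canonical generator is then a Haar unitary $w\in M'\cap M^\cU$. Finally one checks the harmless point that, the trace on $M'\cap M^\cU$ being the restriction of the trace of $M^\cU$, this $w$ is also a Haar unitary of $M^\cU$, i.e. $w\in\cH(M^\cU)$. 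Feeding this $w$ into the path $u,w,v$ for arbitrary $u,v\in\cH(M)$ yields $u\sim_M v$, hence $\sim_M$ has exactly one orbit.

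The only delicate point is the diffuseness of $M'\cap M^\cU$, which I would simply cite. An alternative entry point is the classical Dixmier-type description of property Gamma (via an asymptotically central sequence of trace-zero unitaries), but converting such a sequence into one consisting of Haar unitaries essentially reruns the no-atoms argument, so invoking diffuseness of the central sequence algebra is the cleanest route. I would also note in passing that the bound obtained is sharp: choosing $u,v\in\cH(M)$ that do not commute (possible since $M$ is a nonabelian factor and hence, by the remarks in Section 2, is the SOT-closure of the span of its Haar unitaries) already gives $p_M(u,v)\ge 2$, so in fact $\fR(M)=2$ for property Gamma II$_1$ factors, consistent with the statement in the introduction.
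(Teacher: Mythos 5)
Your argument is correct and is essentially the paper's own proof: both take a Haar unitary in the diffuse central sequence algebra $M'\cap M^\cU$ and use it as the midpoint of a length-two path between any two Haar unitaries of $M$. The extra remarks on sharpness are consistent with Theorem \ref{char-gamma}.
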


\begin{proof}
    If $M$ has Gamma then $M'\cap M^\cU$ is a diffuse tracial von Neumann algebra. Then any Haar unitary in $M'\cap M^\cU$ witnesses a path of length 2 between any two Haar unitaries $u,v\in \cH(M)$.
\end{proof}

\begin{thm}\label{char-gamma}
    A II$_1$ factor $M$ has property Gamma if and only if for all pairs of Haar unitaries $u_1,u_2\in \cH(M),$ $p_M(u_1,u_2) \leq 2.$
\end{thm}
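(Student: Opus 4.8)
The plan is to prove both directions. The forward direction is already handled by Proposition \ref{prop-gamma-has-1-orbit} together with Proposition \ref{prop-gamma-has-1-orbit}'s proof showing the path length is at most $2$ (indeed a single Haar unitary in $M'\cap M^\cU$ serves as the middle vertex $w_1$ for every pair). So the substance is the converse: assuming $p_M(u_1,u_2)\leq 2$ for all Haar unitaries $u_1,u_2\in\cH(M)$, we must show $M$ has property Gamma. My plan is to invoke Theorem \ref{shenshi} (the Shen–Shi criterion): it suffices to show that for every $x\in M$, $W^*(x)'\cap M^\cU\neq\C1$. Since self-adjoint elements generate the same von Neumann algebra up to a translate, and since $W^*(x)$ is always contained in an abelian von Neumann subalgebra when $x$ is normal — but actually $x$ need not be normal — I should instead reduce to the case where $x=x^*$, and then note that $W^*(x)$ sits inside a diffuse (or possibly atomic) abelian subalgebra. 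If $W^*(x)$ already has nontrivial relative commutant it is fine; the remaining case is when $W^*(x)$ is diffuse abelian, hence contains a Haar unitary $u$. Then $W^*(x)'\cap M^\cU\supseteq W^*(u)'\cap M^\cU\cap W^*(x)'$... this is getting circular, so let me restructure.

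Here is the cleaner approach. It suffices by Theorem \ref{shenshi} to produce, for each $x\in M$, a noncentral element of $W^*(x)'\cap M^\cU$. Fix $x$; by replacing $x$ with its real part (which generates a subalgebra containing a self-adjoint whose commutant is contained in $W^*(x)'$ — careful, this inclusion goes the wrong way) I instead argue directly: let $A$ be a maximal abelian subalgebra of $M$ containing a unitary $u$ in the von Neumann algebra generated by $x$ and $1$; if $A$ (equivalently $W^*(x)$) is not diffuse then $M$ automatically has Gamma by a standard argument (a II$_1$ factor with a nontrivial atom in some maximal abelian subalgebra... no — every II$_1$ factor has diffuse MASAs, so this case needs separate handling: if $W^*(x)$ is atomic, pick a diffuse abelian subalgebra $B$ of $M$ with $W^*(x)\subseteq B$ — possible since $M$ is a diffuse factor — and a Haar unitary generator $u$ of a diffuse abelian piece; then $W^*(x)'\cap M^\cU\supseteq B'\cap M^\cU$ and it is enough to show $B'\cap M^\cU\neq\C1$, so WLOG $x$ itself is a Haar unitary). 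Now with $u=x\in\cH(M)$: apply the hypothesis to the pair $(u,u)$ — trivial — so instead apply it to $(u,v)$ where $v$ is a Haar unitary that is \emph{freely independent} from $u$ inside some ambient factor, or more simply: use Theorem \ref{Popa freeness} or just pick any $v\in\cH(M)$ with $v\notin W^*(u)$. By hypothesis $p_M(u,v)\leq 2$, so there is a Haar unitary $w\in\cH(M^\cU)$ with $[u,w]=[w,v]=0$. Then $w\in W^*(u)'\cap M^\cU$ and $w\in W^*(v)'\cap M^\cU$. The point is to choose $v$ so that $w$ cannot be forced into $\C1$: if $w$ were central in $M^\cU$ it would commute with everything, but we only know it commutes with $u$ and $v$, so that alone does not finish it. The real content must be: run the argument for \emph{all} $u\in\cH(M)$ simultaneously and exploit a diagonal/compactness argument over a countable dense family to land a single noncentral central sequence, OR quote Theorem \ref{shenshi} with the observation that $w$ noncentral in $W^*(u)'\cap M^\cU$ for each $u$ is literally the hypothesis of that theorem once we know $w\neq \C1$, which holds since $w$ is Haar hence diffuse. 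So: for each self-adjoint $x\in M$, reduce to a Haar unitary $u$ as above, pick $v\in\cH(M)$ with $p_M(u,v)\leq 2$ and apply the hypothesis to get a Haar unitary $w$ with $[u,w]=0$; then $\C1\neq W^*(w)\subseteq W^*(u)'\cap M^\cU\subseteq W^*(x)'\cap M^\cU$, so Theorem \ref{shenshi} applies and $M$ has Gamma.

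The step I expect to be the main obstacle is the reduction from an arbitrary $x\in M$ (non-normal, possibly with atomic generated algebra) to a Haar unitary $u$ with $W^*(u)'\cap M^\cU\subseteq W^*(x)'\cap M^\cU$, i.e., finding a Haar unitary $u\in W^*(x)'$ or inside a controllable enlargement. For self-adjoint $x$ with diffuse $W^*(x)\cong L^\infty[0,1]$ this is immediate — take $u=e^{2\pi i \varphi(x)}$ where $\varphi$ implements the isomorphism — but one must handle: (i) non-self-adjoint $x$, by noting $W^*(x)=W^*(\Real x,\Img x)$ and that $W^*(x)'\cap M^\cU\supseteq (W^*(\Real x)\vee W^*(\Img x))'$... actually we want containment so we need a Haar unitary commuting with both real and imaginary parts, which we do not have for free. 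The correct fix, which I would flesh out, is: Theorem \ref{shenshi} only requires handling $x$ ranging over a generating set or even just over all $x$; but there is a standard strengthening (or one re-derives it) that it suffices to check $W^*(x)'\cap M^\cU\neq\C1$ for $x$ ranging over a set whose union of generated algebras is all of $M$ — and since $M$ is the SOT-closed span of $\cH(M)$ and $1$ (noted in the Preliminaries), it suffices to treat $x=u\in\cH(M)$. Granting that, the proof is short: for each $u\in\cH(M)$ apply the hypothesis $p_M(u,v)\leq2$ for any fixed $v\in\cH(M)$ to obtain a Haar (hence diffuse, hence $\neq\C1$) unitary $w\in W^*(u)'\cap M^\cU$, and conclude via Theorem \ref{shenshi}. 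I would double-check whether the paper's statement of Theorem \ref{shenshi} already quantifies over all $x\in M$ (it does, "for all $x\in M$"), in which case combined with the density-of-Haar-unitaries remark, reducing to $x$ self-adjoint then to $x$ generating a diffuse abelian algebra containing a Haar unitary is the only genuinely fiddly point, and it is routine.
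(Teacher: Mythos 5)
Your forward direction and your choice of Theorem \ref{shenshi} as the engine for the converse both match the paper, but the reduction you settle on at the end is a genuine gap, and in fact it is false. You claim it suffices to verify $W^*(x)'\cap M^\cU\neq\C1$ only for $x$ ranging over Haar unitaries (or over a generating set). That condition is vacuous: for \emph{any} II$_1$ factor $M$ and any $u\in\cH(M)$ one has $W^*(u)^\cU\subseteq W^*(u)'\cap M^\cU$, which is diffuse abelian and certainly not $\C1$ --- yet $L(\F_2)$ is not Gamma. So the version of the hypothesis you actually end up invoking (a single Haar unitary $w$ in the ultrapower commuting with a single Haar unitary $u$ of $M$) holds for every II$_1$ factor and cannot prove anything. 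Theorem \ref{shenshi} really does require arbitrary single elements $x$, and a single non-normal $x$ can generate a large (even all of a separable) factor, so there is no generating-set shortcut.

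The fix is exactly the two-variable observation you glimpsed and then abandoned (``we need a Haar unitary commuting with both real and imaginary parts, which we do not have for free''): you \emph{do} have it for free, and that is the whole point of the hypothesis being about \emph{pairs}. Write $x=a_1+ia_2$ with $a_1,a_2$ self-adjoint; each $W^*(a_j)$ is abelian, hence contained in a diffuse abelian subalgebra of $M$ generated by a Haar unitary $u_j$. Apply $p_M(u_1,u_2)\leq 2$ to this specific pair: the middle vertex $v\in\cH(M^\cU)$ of the length-$2$ path commutes with $u_1$ and with $u_2$, hence with $a_1$ and $a_2$, hence with $x$, giving $\C1\neq W^*(v)\subseteq W^*(x)'\cap M^\cU$. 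This is the paper's argument, and it is the only place the pair structure of the hypothesis is used; without it the converse fails.
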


\begin{proof}
    The proof of Proposition \ref{prop-gamma-has-1-orbit} shows the forward implication. Conversely, suppose that for all pairs of Haar unitaries $u_1,u_2\in \cH(M)$ there is $v\in \cH(M^\cU)$ such that $[u_1,v]=[v,u_2]=0.$ By Theorem \ref{shenshi} it suffices to show that for all $x\in M,$ $W^*(x)'\cap M^\cU$ is diffuse. Note that $x$ can be written as $x = a_1 + ia_2$ for two self-adjoint elements $a_1,a_2\in M$. Then $W^*(a_1)$ and $W^*(a_2)$ are abelian subalgebras of $M,$ and are therefore contained in diffuse abelian subalgebras. Let $u_1$ be a Haar unitary that generates a diffuse abelian subalgebra containing $W^*(a_1)$ and $u_2$ be a Haar unitary that generates a diffuse abelian subalgebra containing $W^*(a_2)$. By assumption, there is $v\in \cH(M^\cU)$ that commutes with $u_1$ and $u_2$. Therefore $v$ commutes with $a_1\in W^*(u_1)$ and $a_2\in W^*(u_2)$. Hence $v$ commutes with $x = a_1 + ia_2.$ So $W^*(v) \subset W^*(x)'\cap M^\cU$. This demonstrates that $W^*(x)'\cap M^\cU\neq\C1$.
\end{proof}

We will now demonstrate how to construct many non-Gamma factors such that $\sim_M$ admits only 1 orbit.

\begin{thm}\label{cike-one-orb}
    There exists a non-Gamma II$_1$ factor $M$ such that $\sim_M$ has one orbit and further $p_M(u,v)\leq 6$ for any $u, v\in\cH(M)$.
\end{thm}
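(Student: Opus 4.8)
The plan is to realize the factor as $M=\mathcal{S}_2(M_1)$, where $M_1$ is a II$_1$ factor with property (T) (for instance $M_1=L(\operatorname{SL}_3(\Z))$). By Proposition \ref{factorlemmanew}, $M$ is a II$_1$ factor, and by the corollary immediately following it, property (T) of $M_1$ forces $M$ to be non-Gamma; so the content to be proved is the uniform estimate $p_M(u,v)\le 6$ for all $u,v\in\cH(M)$ (which in particular forces $\sim_M$ to have a single orbit). The base input is the observation recorded after the definition of $\mathcal{S}_n$: for the $\|\cdot\|_2$-dense set of orthogonal pairs $(u_1,u_2)\in\mathcal V(M)$ arising in the inductive limit, the Haar unitaries $v_1,v_2$ glued in along the way form a commuting path, so $p_M(u_1,u_2)\le 3$ for those pairs; together with Lemma \ref{density lemma} this upgrades to $p_M(u_1,u_2)\le 3$ for \emph{every} pair of orthogonal Haar unitaries in $M$.

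The remaining difficulty is that an arbitrary pair $u,v\in\cH(M)$ need not be orthogonal, so we route through $M^\cU=\prod_\cU M$. First apply Theorem \ref{Popa freeness} to the separable subalgebra $\{u,v\}''\subset M^\cU$ to obtain a Haar unitary $w\in\cH(M^\cU)$ freely independent from $\{u,v\}''$, hence freely independent from each of $u$ and $v$ separately. I will then construct a length-$3$ commuting path from $u$ to $w$ with intermediate unitaries in $\cH(M^\cU)$, and likewise one from $w$ to $v$; concatenating at $w$ produces a path $u - a_1 - a_2 - w - b_1 - b_2 - v$ of length $6$ with $a_1,a_2,w,b_1,b_2\in\cH(M^\cU)$, which is exactly a witness that $p_M(u,v)\le 6$.

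To build the path from $u$ to $w$: since $u,w\in\cH(M^\cU)$ are freely independent Haar unitaries, Theorem \ref{HI lifting} provides Haar unitary lifts $u=(u^{(k)})_k$ and $w=(w^{(k)})_k$ with $\{u^{(k)}\}''\perp\{w^{(k)}\}''$ for every $k$. By the orthogonal case already handled, for each $k$ there are $a_{1,k},a_{2,k}\in\cH(M^\cU)$ with $[u^{(k)},a_{1,k}]=[a_{1,k},a_{2,k}]=[a_{2,k},w^{(k)}]=0$. Write $a_{i,k}=(a_{i,k,m})_m$ with $a_{i,k,m}\in\cH(M)$; by the ultrapower construction, for each $k$ choose an index $m(k)$ so that $\|[u^{(k)},a_{1,k,m(k)}]\|_2$, $\|[a_{1,k,m(k)},a_{2,k,m(k)}]\|_2$ and $\|[a_{2,k,m(k)},w^{(k)}]\|_2$ are all $<1/k$. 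Then $a_i:=(a_{i,k,m(k)})_k\in\cH(M^\cU)$ satisfy $[u,a_1]=[a_1,a_2]=[a_2,w]=0$, the outer two equalities because $u=(u^{(k)})_k$ and $w=(w^{(k)})_k$ and the displayed $2$-norms tend to $0$ along $\cU$. This is precisely the diagonalization used in the proof of Lemma \ref{density lemma}. Repeating the argument with $v$ in place of $u$ — using its own Theorem \ref{HI lifting} lift of $w$, which is harmless since the resulting path only refers to the genuine element $w\in\cH(M^\cU)$ — yields the path from $w$ to $v$, and concatenation finishes the proof.

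\textbf{Expected main obstacle.} The one delicate point is the interface with Theorem \ref{HI lifting}: one needs the lifts it produces to be Haar unitaries generating genuinely orthogonal subalgebras (not merely unitaries with a single vanishing mixed moment), so that the already-established orthogonal case applies on the nose; everything else is the routine diagonalization and bookkeeping from the previous section. I also note that the bound $6=2(2+1)$ is almost certainly far from optimal — the later conjecture on $\mathcal{S}_n(\F_2)$ predicts that $\mathfrak{R}$ of such a factor can be made large — but the construction above suffices for the present statement.
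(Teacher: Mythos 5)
Your proof is correct and follows essentially the same route as the paper's: fix $M=\mathcal{S}_2(L(SL_3(\Z)))$, use Theorem \ref{Popa freeness} to interpolate a Haar unitary $w$ freely independent from $\{u,v\}''$, lift via Theorem \ref{HI lifting} to componentwise orthogonal pairs, invoke the dense-orthogonal-pairs observation together with Lemma \ref{density lemma} to get two length-$3$ paths, and concatenate them at $w$. The interface point you flag about Theorem \ref{HI lifting} (needing orthogonality of the generated subalgebras, not just one vanishing mixed moment) is exactly the step the paper also passes over silently, and your explicit diagonalization is, if anything, more careful than the published argument.
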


\begin{proof}
Recall the \cite{exoticCIKE} construction from Section \ref{CIKE construction}, and fix $M=\mathcal{S}_2(L(SL_3(\mathbb{Z})))$.  Observe that if we fix unitaries $u_1,u_2 \in M^\cU$, then by Theorem \ref{Popa freeness}, taking  $A = \{u_1,u_2\}''$, there is a diffuse abelian von Neumann algebra $\{v\}''$ (where $v$ is a Haar unitary in $M^\cU$) such that $A$ and $\{v\}''$ are freely independent. In particular by  Theorem \ref{HI lifting}, we can lift $v=(v_i)_{\mathcal{U}}$ so that $u_1$ is independent to $v_i$ for each $i$. Hence by Lemma \ref{density lemma} and the comments preceding Proposition \ref{factorlemmanew} we have $u_1,v$ are related by a path of length 3. Applying the same idea for $u_2$     we see $u_1\sim_{M^\cU} u_2$ via a path of length at most 6. Note that the equivalence relation $\sim_{M^\cU}$ on $\cH(M^\cU)$ only has one orbit which in particular implies that $\sim_M$ also only has one orbit.
\end{proof}

We denote the maximum commutation path length between two Haar unitaries in $N$ by $\mathfrak{R}(N)$; see Definition \ref{defn-diameter} for a precise definition. The above proof shows the following computation: 

\begin{prop}\label{upperbound}
    We have $\mathfrak{R}(\mathcal{S}_n(N))\leq 2(n+1)$ for any II$_1$ factor $N$ and natural number $n\in \mathbb{N}$. 
\end{prop}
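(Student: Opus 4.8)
The idea is to mimic the proof of Theorem \ref{cike-one-orb} but keep track of path lengths carefully, using that $\mathcal{S}_n(N)$ is built by gluing in length-$n$ sequentially commuting handles on a dense set of orthogonal Haar pairs. The strategy splits the task into (i) showing any two \emph{orthogonal} Haar unitaries in $\mathcal{S}_n(N)$ are joined by a path of length at most $n+1$, and (ii) showing any Haar unitary is joined to \emph{some} orthogonal Haar unitary by a short path, so that the triangle-type estimate closes.

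\textbf{Step 1: Dense orthogonal pairs have short paths.} By construction, for the $\|\cdot\|_2$-dense family $\mathcal V(M_k)$ of orthogonal Haar pairs used at stage $k$, the handle $v_1,\dots,v_n$ satisfies $[u_1^{\sigma(k)},v_1]=[v_i,v_{i+1}]=[v_n,u_2^{\sigma(k)}]=0$, so $p_{\mathcal{S}_n(N)}(u_1^{\sigma(k)},u_2^{\sigma(k)})\leq n+1$ for this dense set of pairs; this is exactly the comment recorded after Definition \ref{def-path}/before Proposition \ref{factorlemmanew}. Lemma \ref{density lemma} (applied to each coordinate of a pair converging in $2$-norm) upgrades this to: \emph{every} orthogonal pair $(u_1,u_2)\in\mathcal V(\mathcal{S}_n(N))$ has $p_{\mathcal{S}_n(N)}(u_1,u_2)\leq n+1$. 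One minor point to verify is that the density sequence $\{(u_1^{k,\ell},u_2^{k,\ell})\}_\ell$ ranging over all $k$ is $\|\cdot\|_2$-dense in $\mathcal V(\mathcal{S}_n(N))$; this follows since $\cup_k M_k$ is $\|\cdot\|_2$-dense and orthogonality of Haar unitaries is stable under small perturbations (and being Haar is dense by Lemma \ref{haar lifting OP}-type reasoning, or we pass to the generator of $L\mathbb{Z}$ inside a nearby diffuse abelian subalgebra via Lemma \ref{lem-diff-lift}).

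\textbf{Step 2: Every Haar unitary is close (in path length) to an orthogonal one.} Here I would work inside $\mathcal{S}_n(N)^\cU$. Given any $u\in\mathcal H(\mathcal{S}_n(N))$, apply Theorem \ref{Popa freeness} with $A=\{u\}''$ to obtain a Haar unitary $w\in\mathcal{S}_n(N)^\cU$ freely independent from $u$; in particular $\{u\}''\perp\{w\}''$. Now $u$ and $w$ form an orthogonal Haar pair \emph{in $\mathcal{S}_n(N)^\cU$}, so by Step 1 applied to the factor $\mathcal{S}_n(N)^\cU$ — whose $\mathcal{S}_n$-structure still provides the handles, or more directly by noting the $\sim$-relation is computed the same way and using $\sim_{M^\cU}^0=\sim_{M^\cU}$ (Proposition \ref{prop-u.p-of-u.p}) together with the density lemma — we get a path of length $\leq n+1$ from $u$ to $w$. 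Alternatively, and more cleanly: lift $w=(w_i)_\cU$ via Theorem \ref{HI lifting} so that $w_i\perp u$ for each $i$, apply the Step 1 conclusion in $\mathcal{S}_n(N)$ to the orthogonal pair $(u,w_i)$ to get a path of length $\leq n+1$, then use Lemma \ref{density lemma} to push this to the ultrapower level, concluding $p_{\mathcal{S}_n(N)}(u,w)\leq n+1$.

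\textbf{Step 3: Concatenate.} Given arbitrary $u,v\in\mathcal H(\mathcal{S}_n(N))$, choose by Theorem \ref{Popa freeness} a single Haar unitary $w\in\mathcal{S}_n(N)^\cU$ freely independent from $\{u,v\}''$; then both $(u,w)$ and $(w,v)$ are orthogonal Haar pairs, and by Step 2 each contributes a path of length $\leq n+1$. Concatenating at $w$ gives $p_{\mathcal{S}_n(N)}(u,v)\leq 2(n+1)$, hence $\mathfrak{R}(\mathcal{S}_n(N))\leq 2(n+1)$, and in particular $\sim_{\mathcal{S}_n(N)}$ has a single orbit. The main obstacle is Step 2 — making rigorous that the handle-based length-$(n+1)$ bound for a \emph{dense} set of orthogonal pairs genuinely extends, via Lemma \ref{density lemma} and a lifting argument (Theorems \ref{HI lifting} or \ref{Popa freeness}), to an \emph{arbitrary} orthogonal pair, including one living in the ultrapower; everything else is bookkeeping of path lengths and a routine concatenation.
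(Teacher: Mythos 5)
Your proposal is correct and follows essentially the same route as the paper: the paper's proof of Proposition \ref{upperbound} is exactly the argument of Theorem \ref{cike-one-orb} with $n+1$ in place of $3$, namely Popa's free independence theorem to produce a Haar unitary $w$ freely independent from $\{u,v\}''$, the Houdayer--Ioana lifting to realize $w$ with coordinatewise orthogonal lifts, the density of the handle pairs together with Lemma \ref{density lemma} to get length $n+1$ from $u$ to $w$ and from $w$ to $v$, and concatenation. Your Steps 1--3 reproduce this decomposition faithfully, and the ``minor points'' you flag (density of the chosen pairs in $\mathcal V(\mathcal S_n(N))$, pushing the bound to the ultrapower) are handled in the paper in the same implicit way.
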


\section{Uncountably  many orbits and free group factors}

\begin{lem}
    Let $(M,\tau)$ be a diffuse tracial von Neumann algebra and $u\in \cH(M)$. Then $h(\{\cO_M(u)\}'': M) \leq 0.$
\end{lem}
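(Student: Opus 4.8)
The plan is to show that the von Neumann algebra generated by the whole orbit $\mathcal{O}_M(u)$ has nonpositive $1$-bounded entropy by exhibiting it as an increasing union of subalgebras generated by finite sub-orbits, each of which has nonpositive $1$-bounded entropy because of the sequential commutation data connecting its elements. The key input is Fact \ref{path fact 1bdd}: if two unitaries $u_1, u_2$ are joined by a sequentially commuting path of Haar unitaries in $M^{\mathcal{U}}$, then $h(\{u_1,u_2\}'' : M^{\mathcal{U}}) \leq 0$, hence $h(\{u_1,u_2\}'':M) \leq 0$ by Fact \ref{in the presence of the ultra}.

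First I would fix an enumeration issue: the orbit $\mathcal{O}_M(u)$ may be uncountable, so I want to write $\{\mathcal{O}_M(u)\}'' = \bigvee_{F} \{F\}''$ where $F$ ranges over finite subsets of $\mathcal{O}_M(u)$ containing a fixed element, say $u$ itself. The family of such $\{F\}''$ is upward directed (the join of two is contained in the one indexed by the union), and its join is $\{\mathcal{O}_M(u)\}''$. Each $\{F \cup \{u\}\}''$ contains $\{u\}''$, which is diffuse, so by Fact \ref{unions} (applied to this directed family rather than a chain, which is fine since one may pass to the chain of finite unions) we get $h(\{\mathcal{O}_M(u)\}'' : M) = \sup_F h(\{F\}'' : M)$.

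Second, I would bound $h(\{F\}'':M)$ for a fixed finite $F = \{u, u_1, \ldots, u_m\} \subset \mathcal{O}_M(u)$. Each $u_j \sim_M u$, so there is a sequentially commuting path of Haar unitaries in $M^{\mathcal{U}}$ from $u$ to $u_j$; as in the proof of Fact \ref{path fact 1bdd}, the algebra generated by $u$, $u_j$, and all the intermediate Haar unitaries $v^{(j)}_1, \ldots, v^{(j)}_{n_j}$ satisfies $h(\,\cdot\, : M^{\mathcal{U}}) \leq 0$. Now form the algebra $P$ generated by $u$ together with all the $u_j$ and all the path unitaries for every $j$; writing $P = \{u, v^{(1)}_1,\dots\}'' \vee \{u, v^{(2)}_1,\dots\}'' \vee \cdots$, each consecutive join is along an algebra containing $\{u\}''$ which is diffuse, so repeated application of Fact \ref{joins} gives $h(P : M^{\mathcal{U}}) \leq 0$. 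Since $\{F\}'' \subseteq P$ and $\{F\}''$ is diffuse (it contains the Haar unitary $u$), Fact \ref{fact 1} gives $h(\{F\}'' : M^{\mathcal{U}}) \leq h(P : M^{\mathcal{U}}) \leq 0$, and then Fact \ref{in the presence of the ultra} yields $h(\{F\}'' : M) \leq 0$. Taking the supremum over $F$ and using the first step finishes the proof.

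The main obstacle — really the only subtlety — is bookkeeping around the directedness/chain condition in Fact \ref{unions} and making sure the diffuseness hypotheses in Facts \ref{unions}, \ref{joins} are met at every step; including the fixed Haar unitary $u$ in every finite set $F$ and in every intermediate join guarantees this. One should also note that the path unitaries live in $M^{\mathcal{U}}$, not $M$, which is exactly why the entropy computations are carried out ``in the presence of'' $M^{\mathcal{U}}$ and only transported back to $M$ at the end via Fact \ref{in the presence of the ultra}; no further input is needed.
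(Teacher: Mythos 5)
Your proposal is correct and follows essentially the same route as the paper: reduce to the ultrapower via Fact \ref{in the presence of the ultra}, apply Fact \ref{path fact 1bdd} to each pair $(u,v)$ with $v\sim_M u$, and then combine Facts \ref{unions} and \ref{joins} (with the common diffuse subalgebra $\{u\}''$ guaranteeing the hypotheses) to bound the entropy of the join over the whole orbit. Your extra bookkeeping with the path unitaries and the algebra $P$ is harmless but redundant, since Fact \ref{path fact 1bdd} already packages that step.
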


\begin{proof}
    We use the facts on 1-bounded entropy outlined in Section \ref{1bdd entropy facts}. First note that $h(\cO_M(u)'':M) = h(\cO_M(u)'':M^{\mathcal{U}})$ by Fact \ref{in the presence of the ultra}. Moreover, by Fact \ref{path fact 1bdd}, for any Haar unitary $v\sim_{M} u$  we have $h(\{u,v\}'':M^{\mathcal{U}})\leq 0$. Therefore taking joins over all such $v$, combining Facts \ref{unions} and \ref{joins} we have that $h(\{\cO_{M}(u)\}'':M^\cU) \leq 0$, which concludes the proof.
\end{proof}

\begin{lem}
    Let $(M,\tau)$ be a diffuse tracial von Neumann algebra and let $S\subset \cH(M)$ consist of one representative of each orbit of $\sim_M$. Then $\bigcup_{u\in S} (\cO_M(u)'')_{sa} = M_{sa}$. Moreover, we have at least one of the two following situations:
    \begin{enumerate}
        \item There is $u\in \cH(M)$ such that $\cO_M(u)''=M$.
        \item $S$ is uncountable, i.e., $\sim_M$ has uncountably many orbits.
    \end{enumerate}
\end{lem}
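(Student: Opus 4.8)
The plan is to establish the set equality by a direct diffuseness argument and then obtain the dichotomy by a Baire category argument carried out in the operator norm, which pleasantly requires no separability hypothesis on $M$.

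For the equality $\bigcup_{u\in S}(\cO_M(u)'')_{sa}=M_{sa}$: the inclusion ``$\subseteq$'' is trivial. For ``$\supseteq$'', fix $a\in M_{sa}$. Since $M$ is diffuse, $W^*(a)$ is abelian and is contained in a diffuse abelian von Neumann subalgebra of $M$ isomorphic to $L(\Z)$, whose standard generator $w$ is a Haar unitary (as recalled in the preliminaries). Thus $a\in\{w\}''\subseteq\cO_M(w)''$, the last inclusion because $w\sim_M w$, so $w\in\cO_M(w)$. Letting $u\in S$ be the unique representative with $w\sim_M u$, we have $\cO_M(w)=\cO_M(u)$, hence $a\in(\cO_M(u)'')_{sa}$. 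This proves $M_{sa}=\bigcup_{u\in S}(\cO_M(u)'')_{sa}$.

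For the dichotomy I would argue by contraposition: assume (2) fails, so $\sim_M$ has only countably many orbits and $S=\{u_n\}_n$ is countable; the goal is to produce $u\in\cH(M)$ with $\cO_M(u)''=M$, i.e.\ (1). Put $N_n:=\cO_M(u_n)''$. By the set equality just proved, $M_{sa}=\bigcup_n (N_n)_{sa}$, a countable union. Each $N_n$ is a von Neumann subalgebra of $M$, hence norm-closed, so each $(N_n)_{sa}=N_n\cap M_{sa}$ is a norm-closed real-linear subspace of $M_{sa}$, which with the operator norm is a nonempty real Banach space. If $N_n\subsetneq M$ for every $n$, then also $(N_n)_{sa}\subsetneq M_{sa}$ for every $n$: indeed every element of $M$ has the form $a+ib$ with $a,b\in M_{sa}$, so $(N_n)_{sa}=M_{sa}$ would force $N_n=M$. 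A proper closed subspace of a Banach space has empty interior and is therefore nowhere dense, so $M_{sa}$ would be a countable union of nowhere-dense subsets, contradicting the Baire Category Theorem. Hence $N_n=M$ for some $n$, and $u_n$ witnesses (1).

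The one point demanding care is the choice of topology. The natural topology for $\sim_M$ is the $2$-norm, but $(M_{sa},\|\cdot\|_2)$ is not complete, so Baire does not apply there directly; if $M$ were separable one could instead run the argument inside the $\|\cdot\|_2$-closed unit ball of $M_{sa}$ (which is Polish), locate a translate of a ball contained in some $(N_n)_{sa}$, and then rescale an arbitrary self-adjoint element down into that ball — but this route invokes separability, which the statement does not grant. Passing to the operator-norm topology removes the obstacle entirely: $M$ is a Banach space unconditionally, ``proper closed subspace $\Rightarrow$ nowhere dense'' is immediate, and the only elementary fact to verify is that properness of $N_n$ is inherited by $(N_n)_{sa}$, namely the self-adjoint decomposition $M=M_{sa}+iM_{sa}$. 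Everything else is bookkeeping.
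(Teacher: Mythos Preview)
Your proof is correct and follows essentially the same route as the paper: the set equality via embedding $W^*(a)$ into a diffuse abelian subalgebra generated by a Haar unitary, and the dichotomy via the Baire Category Theorem applied to the countable cover $M_{sa}=\bigcup_n (N_n)_{sa}$. The paper's proof is terse and does not specify a topology; your explicit choice of the operator norm (together with the observation that $(N_n)_{sa}=M_{sa}$ forces $N_n=M$) is a genuine clarification, since it sidesteps the incompleteness of $(M_{sa},\|\cdot\|_2)$ and any separability hypothesis.
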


\begin{proof}
    It suffices to show that for each $x\in M_{sa}$, there is $u\in \cH(M)$ such that $x\in \{u\}''.$ But this is clear since $\{x\}''$ is abelian and therefore contained in a diffuse abelian subalgebra since $M$ is diffuse, and diffuse abelian subalgebras are generated by a Haar unitary. For the moreover part, note that the Baire Category Theorem implies that if there are only countably many orbits, then one of the $(\cO_M(u)'')_{sa}$ be non-meager in $M_{sa}$. But every proper subspace is meager, hence $(\cO_M(u)'')_{sa} = M_{sa}$ and thus $\cO_M(u)''= M.$
\end{proof}

\begin{thm}\label{thm-uncntble-orbits}
    Let $M$ be type II$_1$ von Neumann algebra. If $h(M) > 0,$ then $\sim_M$ has uncountably many orbits. 
\end{thm}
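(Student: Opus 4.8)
\textbf{Proof proposal for Theorem \ref{thm-uncntble-orbits}.}

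The plan is to argue by contradiction, combining the two preceding lemmas with the 1-bounded entropy machinery recalled in Section \ref{1bdd entropy facts}. Suppose $M$ is a type II$_1$ von Neumann algebra with $h(M) > 0$, and suppose for contradiction that $\sim_M$ has only countably many orbits. By the second lemma of this section, we are then in situation (1): there exists a Haar unitary $u \in \cH(M)$ such that $\cO_M(u)'' = M$. Here I should first observe that $M$ is diffuse --- indeed $h(M) > 0$ forces $M$ to be diffuse since $1$-bounded entropy of non-diffuse algebras is $-\infty$ (or one restricts attention to the diffuse part; but a II$_1$ algebra with $h > 0$ is automatically diffuse), so the lemmas apply.

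Now apply the first lemma of this section to $u$: it gives $h(\cO_M(u)'' : M) \leq 0$. But $\cO_M(u)'' = M$, so this says $h(M : M) \leq 0$, i.e., $h(M) \leq 0$, contradicting the hypothesis $h(M) > 0$. This is the whole argument; the substantive content has already been packaged into the two lemmas (which in turn rest on Fact \ref{path fact 1bdd}, itself a consequence of Facts \ref{fact 1}, \ref{in the presence of the ultra}, \ref{unions}, \ref{joins}, and the observation that every self-adjoint element of a diffuse algebra sits inside a diffuse abelian subalgebra generated by a Haar unitary).

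The only point requiring a little care --- and the step I would flag as the main obstacle, modest as it is --- is making sure the dichotomy lemma genuinely produces case (1) under the countability assumption and that the ``in the presence'' passage $h(\cO_M(u)'' : M) = h(\cO_M(u)'' : M^{\mathcal U})$ together with the monotonicity and join facts legitimately upgrade the per-pair bound $h(\{u,v\}'' : M^{\mathcal U}) \leq 0$ to the bound on the join over the whole orbit. Since $\cO_M(u)$ is an SOT-subset whose generated algebra may only be expressible as an increasing union over finite (or separable) subsets, one invokes Fact \ref{unions} for the directed union of the subalgebras $\{u, v_1, \dots, v_k\}''$ with $v_j \in \cO_M(u)$, each of which has $1$-bounded entropy $\leq 0$ by iterating Fact \ref{joins} along a common diffuse intersection (namely $\{u\}''$). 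This is exactly what the first lemma asserts, so in the write-up I would simply cite it. The conclusion is then immediate, and if $M$ is moreover separable, Silver's dichotomy (Theorem \ref{silver-dichotomy}) promotes ``uncountably many'' to ``exactly $\mathfrak{c}$ many,'' as noted after that theorem.
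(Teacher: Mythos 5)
Your argument is correct and is essentially the paper's own proof: both combine the orbit-entropy lemma ($h(\cO_M(u)'':M)\leq 0$) with the Baire-category dichotomy lemma, the only cosmetic difference being that you phrase it as a contradiction while the paper argues directly that case (1) of the dichotomy is impossible. The extra care you flag (diffuseness, the ``in the presence'' and join/union facts) is already packaged into the cited lemmas, so nothing further is needed.
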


\begin{proof}
Since $h(\cO_M(u)'') \leq 0$ for all $u\in \cH(M),$ we cannot have $\cO_M(u)'' = M$. By the previous lemma, we get that $\sim_M$ has uncountably many orbits.

\end{proof}

\begin{thm}\label{thm-FGF}
    Set $M = L(\mathbb F_2)$. Then there is a one-to-one correspondence between maximal amenable subalgebras of $M$ and orbits of $\sim_M$. Precisely, for all $u\in \cH(L(\mathbb F_2)),$ $\cO_M(u) = \cH(\cO_M(u)'')$ and $\cO_M(u)''$ is the unique maximal amenable subalgebra containing $u$.
\end{thm}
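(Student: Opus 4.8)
The plan is to establish the chain of equivalences: for $u \in \cH(L(\mathbb{F}_2))$, the orbit $\cO_M(u)$ sits inside $\cH(\cO_M(u)'')$, the algebra $P := \cO_M(u)''$ is amenable and maximal amenable, and conversely every Haar unitary in a maximal amenable subalgebra containing $u$ lies in $\cO_M(u)$. I would first invoke the key input from $1$-bounded entropy: by the lemma just proved, $h(\cO_M(u)'' : M) \leq 0$, and since $M = L(\mathbb{F}_2)$, item (5) of Theorem \ref{examples of h>0} (i.e., the Peterson--Thom resolution) says that any von Neumann subalgebra $Q \subseteq L(\mathbb{F}_2)$ with $h(Q : M) \leq 0$ is amenable — more precisely, a non-amenable subalgebra would have $h(Q:M) > 0$. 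Hence $P = \cO_M(u)''$ is amenable.

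Next, maximality. Suppose $P \subseteq A \subseteq M$ with $A$ amenable; I want $A = P$, which forces uniqueness of the maximal amenable subalgebra containing $u$. The point is that every Haar unitary $w \in \cH(A)$ satisfies $w \sim_M u$: indeed $A$ is amenable hence hyperfinite (Connes), so $A$ has property Gamma relative to itself, or more directly, inside $A^{\mathcal{U}}$ (equivalently inside $M^{\mathcal{U}}$, using $A^{\mathcal{U}} \subseteq M^{\mathcal{U}}$) the relative commutant $A' \cap A^{\mathcal{U}}$ is diffuse, so there is a Haar unitary $z \in A' \cap A^{\mathcal{U}} \subseteq \cH(M^{\mathcal{U}})$ commuting with both $u$ and $w$ (both being in $A$), giving a path $u - z - w$ of length $2$. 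Therefore $w \in \cO_M(u)$, so $w \in \cO_M(u) \subseteq P$. Since $A$ is diffuse (it contains the Haar unitary $u$), $A$ is the SOT-closed span of its Haar unitaries together with $1$ (the remark in the Preliminaries, applied to the separable diffuse $A$), and all those Haar unitaries lie in $P$; hence $A \subseteq P$, so $A = P$. This simultaneously shows $P$ is maximal amenable and that it is the \emph{unique} maximal amenable subalgebra containing $u$ (any two would both equal $P$).

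For the remaining identity $\cO_M(u) = \cH(P)$: the inclusion $\cO_M(u) \subseteq \cH(P)$ is immediate from $\cO_M(u) \subseteq P$ and the fact that orbit elements are Haar unitaries. For the reverse inclusion, let $w \in \cH(P)$; since $P$ is amenable and diffuse, the same relative-commutant argument as above (now with $A = P$) produces a Haar unitary in $P' \cap P^{\mathcal{U}}$ commuting with $u$ and $w$, so $p_M(u,w) \leq 2$ and $w \in \cO_M(u)$. Finally, I should note $u$ does lie in \emph{some} maximal amenable subalgebra — $\{u\}''$ is amenable (abelian), and by Zorn's lemma it is contained in a maximal amenable subalgebra, which by the uniqueness just established must be $P = \cO_M(u)''$; this also re-confirms $\cO_M(u)''$ is amenable directly. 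The correspondence $\cO_M(u) \leftrightarrow \cO_M(u)''$ is then a bijection between orbits and maximal amenable subalgebras: it is well-defined and injective because distinct orbits give algebras with disjoint Haar-unitary sets hence distinct algebras, and surjective because any maximal amenable $A \subseteq M$ is diffuse (it cannot be a matrix algebra inside a II$_1$ factor and be maximal amenable — it would be properly contained in a larger amenable algebra; alternatively $A$ diffuse follows since $M$ has no minimal projections and a non-diffuse amenable algebra has a direct summand with an atom which can be enlarged), so $A$ contains a Haar unitary $u$, and then $A = \cO_M(u)''$ by maximality as above.

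The main obstacle I anticipate is the step identifying $A' \cap A^{\mathcal{U}}$ as diffuse and ensuring the path is genuinely inside $M^{\mathcal{U}}$ rather than just $A^{\mathcal{U}}$ — one must check the natural embedding $A^{\mathcal{U}} \hookrightarrow M^{\mathcal{U}}$ is trace-preserving and that the diffuseness of the relative commutant of an amenable (hyperfinite) von Neumann algebra inside its own ultrapower is available in the generality needed (including the case where $A$ has a non-trivial center, which does not occur here since a maximal amenable subalgebra of a II$_1$ factor containing a Haar unitary — we only need the diffuse abelian case, where it is classical). The deployment of the Peterson--Thom theorem via $h \leq 0 \Rightarrow$ amenable is a black box from Theorem \ref{examples of h>0}(5), so no difficulty there beyond citing it correctly.
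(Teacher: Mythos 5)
Your proof is correct and follows essentially the same route as the paper: both rest on the lemma that $h(\cO_M(u)'':M)\leq 0$ combined with the Peterson--Thom resolution to conclude $\cO_M(u)''$ is amenable, and on hyperfiniteness of a diffuse amenable subalgebra $A$ giving a Haar unitary in $A'\cap A^{\mathcal U}\subseteq M^{\mathcal U}$, hence length-$2$ paths between any two Haar unitaries of $A$. Your write-up simply makes explicit the steps the paper's terser argument leaves implicit (that any amenable subalgebra containing $u$ has all its Haar unitaries in $\cO_M(u)$ and is therefore contained in $\cO_M(u)''$), which is a fine way to present it.
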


\begin{proof}
    If $A$ is a maximal amenable subalgebra, then $A$ only intersects one orbit of $\sim_M$ since $A$ is amenable (and thus has property Gamma). Conversely, if $u\in \cH(M)$ then $B=\cO_M(u)''$ is a subalgebra of $M$ with $h(B)=0$. Therefore $B$ is amenable by Theorem \ref{examples of h>0} (\ref{pt 1bdd entropy theorem}). $B$ is also clearly diffuse since $u$ is a Haar unitary. Hence $B$ is contained inside a unique maximal amenable subalgebra $A$. It is clear that $\cO_M(u) \subseteq \cH(\cO_M(u)'').$ Conversely, note that $\cO_M(u)''$ is amenable so it only contains one orbit, namely $\cO_M(u).$
\end{proof}

\section{A numerical invariant for tracial diffuse von Neumann algebras}

\subsection{Definitions and properties of commutation diameter and number of orbits}

\begin{defn}\label{defn-diameter}
    Let $(M,\tau)$ be a tracial diffuse von Neumann algebra. For an orbit $\mathcal O\subset \cH(M)$ of $\sim_M$, define $\fR(\mathcal O) := \sup_{u,v\in\cO} p_M(u,v)$ (This supremum could, at least in principle, be infinite.) Define by $\fR(M)$ the supremum of $\fR(\mathcal O) $ over all orbits $\mathcal O$ of $\sim_{M}$. We also define $\fO(M)$ to be the number of orbits of $\sim_M$ in $\cH(M).$ 
\end{defn}

The following is immediate from the definitions and independence of the choice of ultrafilter (see Proposition \ref{sim-well-def}).

\begin{prop}
    Let $(M,\tau)$ be a tracial diffuse von Neumann algebra and $\cU$ an ultrafilter. Then
    \begin{enumerate}
        \item $\fR(M) \leq \fR(M^\cU)$;
        \item $\fO(M) \leq \fO(M^\cU)$.
    \end{enumerate}
\end{prop}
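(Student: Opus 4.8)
The plan is to reduce both inequalities to the single equality
$$p_M(u,v)=p_{M^{\cU}}(\iota(u),\iota(v))\qquad\text{for all }u,v\in\cH(M),$$
where $\iota\colon M\hookrightarrow M^{\cU}$ is the diagonal inclusion (which carries $\cH(M)$ into $\cH(M^{\cU})$). Granting this, statement (1) follows because for any $u\sim_M v$ we then have $\iota(u)\sim_{M^{\cU}}\iota(v)$ and $p_M(u,v)=p_{M^{\cU}}(\iota(u),\iota(v))\le\fR(M^{\cU})$, so taking the supremum over such pairs gives $\fR(M)\le\fR(M^{\cU})$. Statement (2) follows because the equality shows $\iota$ sends two Haar unitaries of $M$ to the same $\sim_{M^{\cU}}$-orbit exactly when they lie in the same $\sim_M$-orbit, so $\iota$ induces an injection from the set of orbits of $\sim_M$ into the set of orbits of $\sim_{M^{\cU}}$, whence $\fO(M)\le\fO(M^{\cU})$.

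To prove the equality I would use the intrinsic, ultrafilter-free description of bounded path length furnished by Proposition \ref{sim-well-def}: for each $k$, $p_M(u,v)\le k$ holds if and only if for every $n\ge 1$ there are $w_1,\dots,w_k\in\cH(M)$ with $\|[w_i,w_{i+1}]\|_2<1/n$ for all $0\le i\le k$, where $w_0=u$ and $w_{k+1}=v$. Since $M^{\cU}$ is again a diffuse tracial von Neumann algebra, the same proposition gives the analogous criterion for $p_{M^{\cU}}(\iota(u),\iota(v))\le k$ using near-commuting length-$k$ paths in $\cH(M^{\cU})$ from $\iota(u)$ to $\iota(v)$. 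The inequality $p_{M^{\cU}}(\iota(u),\iota(v))\le p_M(u,v)$ is then immediate, since $\iota$ embeds $\cH(M)$ isometrically into $\cH(M^{\cU})$ and preserves commutators, so any near-commuting path in $\cH(M)$ pushes forward to one in $\cH(M^{\cU})$.

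For the reverse inequality $p_M(u,v)\le p_{M^{\cU}}(\iota(u),\iota(v))$, suppose the right side is $\le k$. Fix $n$; pick $w_1,\dots,w_k\in\cH(M^{\cU})$ with $\|[w_i,w_{i+1}]\|_2<1/n$ (with $w_0=\iota(u)$, $w_{k+1}=\iota(v)$), and use Lemma \ref{haar lifting OP} to lift each $w_i$ to a sequence $w_i=(w_{i,m})_m$ with $w_{i,m}\in\cH(M)$, lifting $\iota(u),\iota(v)$ as the constant sequences $(u)_m,(v)_m$. Then $\lim_{m\to\cU}\|[w_{i,m},w_{i+1,m}]\|_2=\|[w_i,w_{i+1}]\|_2<1/n$ for each of the finitely many $i$, so there is a single index $m$ with $\|[w_{i,m},w_{i+1,m}]\|_2<1/n$ for all $i$; these $w_{i,m}$ form the required near-commuting length-$k$ path in $\cH(M)$. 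As $n$ was arbitrary, the intrinsic criterion gives $p_M(u,v)\le k$, completing the equality.

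I do not expect a serious obstacle: the only point requiring care is to invoke Proposition \ref{sim-well-def} for the (still diffuse, still tracial) algebra $M^{\cU}$ as well as for $M$, so that ``$p_{M^{\cU}}$'' — a priori defined through some further ultrapower $(M^{\cU})^{\cV}$ — is correctly computed by the intrinsic near-commuting condition above, and then to apply Lemma \ref{haar lifting OP} to $M^{\cU}$ with $D=\cH(M^{\cU})$. An essentially equivalent alternative is to compare $\sim_M$ and $\sim_{M^{\cU}}$ inside a common iterated ultrapower $(M^{\cU})^{\cV}\cong M^{\cU\rtimes\cV}$, checking that $\cU\rtimes\cV$ is countably cofinal whenever $\cV$ is and that the isomorphism matches the diagonal copies of $M$, and invoking ultrafilter-independence of both relations.
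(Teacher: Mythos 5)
Your argument is correct and is essentially the fleshed-out version of what the paper leaves implicit: the proposition is stated there without proof, with only a pointer to Proposition \ref{sim-well-def}, and your route --- establishing $p_M(u,v)=p_{M^\cU}(\iota(u),\iota(v))$ via the intrinsic near-commuting criterion together with the lifting Lemma \ref{haar lifting OP} --- is exactly the intended mechanism. You also correctly identify that the direction actually needed for both inequalities is the lifting direction $p_M(u,v)\le p_{M^\cU}(\iota(u),\iota(v))$, not merely the trivial push-forward of paths along the diagonal inclusion.
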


Now we prove the following result stated as Theorem \ref{mainprop list}: (1). 
\begin{thm}\label{one-orbit}
    Let $(M,\tau)$ be a diffuse tracial von Neumann algebra. Then $\fO(M) = 1$ or $\fO(M) \geq \aleph_1.$ Moreover, if $M$ is separable, then $\fO(M) = 1$ or $\fO(M) = \mathfrak c$. Moreover, if $\fO(M) = 1$ then $\fR(M) < \infty.$
\end{thm}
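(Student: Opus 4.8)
The plan is to prove the three assertions in order, using the structural lemmas of Sections 3--5 together with the ultrafilter-independence of $\sim_M$. For the dichotomy $\fO(M)=1$ or $\fO(M)\geq\aleph_1$: I would argue by contradiction, assuming $\sim_M$ has at least two but only countably many orbits. Picking representatives $u\in S$ of each orbit, the penultimate lemma of Section 5 gives $\bigcup_{u\in S}(\cO_M(u)'')_{sa}=M_{sa}$; since $S$ is countable and each $(\cO_M(u)'')_{sa}$ is a norm-closed (indeed $\|\cdot\|_2$-closed) real subspace of the Polish space $M_{sa}$, the Baire Category Theorem forces some $(\cO_M(u)'')_{sa}$ to be non-meager, hence equal to $M_{sa}$, hence $\cO_M(u)''=M$. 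But then every self-adjoint, and hence every Haar unitary, of $M$ lies in a diffuse abelian subalgebra of $M$, and one checks that all Haar unitaries of $M$ are already $\sim_M$-related to $u$ (a Haar unitary $w$ commuting with a diffuse abelian subalgebra... no---rather, one uses that $\cO_M(u)''=M$ means any Haar unitary $v$ satisfies $v\in\cO_M(u)''$, and since $\cO_M(u)''$ is generated by $\cO_M(u)$, a short argument shows $v\sim_M u$), so $\fO(M)=1$, a contradiction. The one point needing care here is passing from ``$M$ is generated by the orbit $\cO_M(u)$'' to ``$v\sim_M u$ for every Haar unitary $v$''; I expect this to follow by writing $v$'s generator inside an abelian subalgebra built from finitely many elements of $\cO_M(u)$ and chaining commutations, but this is the one genuinely delicate step and will likely require a density/approximation argument in the ultrapower in the spirit of Lemma \ref{density lemma}.

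For the separable case, once we know $\fO(M)\neq 1$ implies uncountably many orbits, Proposition \ref{Borel} says $\sim_M$ is a Borel equivalence relation on the standard Borel space $\cH(M)$, so Silver's Dichotomy (Theorem \ref{silver-dichotomy}) upgrades ``uncountably many'' to ``exactly $\mathfrak c$ many''.

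For the final assertion, suppose $\fO(M)=1$; I must show $\fR(M)<\infty$, i.e.\ there is a uniform bound $K$ with $p_M(u,v)\leq K$ for all $u,v\in\cH(M)$. The strategy is a compactness argument. Suppose not: then there are pairs $(u_n,v_n)$ in $\cH(M)$ with $p_M(u_n,v_n)\to\infty$. The obstacle is that $\cH(M)$ with the strong topology is not compact, so I cannot directly extract convergent subsequences and invoke Lemma \ref{density lemma}. Instead I would pass to the ultrapower: since $\fO(M)=1$ and $\sim_M$ is ultrafilter-independent, and since (Proposition \ref{prop-u.p-of-u.p}) $\sim_{M^\cU}^0=\sim_{M^\cU}$, one reduces to showing a uniform bound for $\sim^0$ inside $M^\cU$; then take $u=(u_n)_n$ and $v=(v_n)_n$ in $\cH(M^\cU)$ (using Lemma \ref{haar lifting OP} to see these are Haar), note $u\sim_{M^\cU}v$ because $\fO(M^\cU)=\fO((M^\cU)^\cU)\geq$ ... rather: $\fO(M)=1$ gives, via the explicit path in $M^\cU$ and Proposition \ref{prop-u.p-of-u.p}, a finite path of length $N$ between $u$ and $v$ in $(M^\cU)^\cU=M^{\cU\rtimes\cU}$, hence by Proposition \ref{sim-well-def}(3) applied in $M^\cU$ there exist, for each $m$, length-$N$ approximate-commuting paths in $M^\cU$, and lifting these coordinatewise (Lemma \ref{haar lifting OP}) and applying a Łoś-type argument along $\cU$ yields, for $\cU$-many $n$, a genuine length-$N$ path between $u_n$ and $v_n$ in $M^\cU$, so $p_M(u_n,v_n)\leq N$ for those $n$, contradicting $p_M(u_n,v_n)\to\infty$. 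The main obstacle is making this ultrapower-of-ultrapower bookkeeping precise so that the single integer $N$ (coming from $u\sim v$ in one fixed iterated ultrapower) transfers down to a bound valid for cofinitely many of the original pairs; this is exactly where countable cofinality of $\cU$ is essential, as flagged in the remark after Proposition \ref{prop-u.p-of-u.p}.
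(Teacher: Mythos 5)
Your proposal has two genuine gaps, one in each of the nontrivial assertions.

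For the dichotomy, you apply Baire category to the subspaces $(\cO_M(u)'')_{sa}\subset M_{sa}$ and conclude that some orbit generates $M$. But the step you flag as ``delicate'' --- passing from $\cO_M(u)''=M$ to $v\sim_M u$ for every $v\in\cH(M)$ --- is not merely delicate, it is the whole problem, and your suggested route does not work: elements of $\cO_M(u)$ need not commute with one another, so an abelian subalgebra containing $v$ that happens to sit inside $\cO_M(u)''$ gives no commuting path back to $u$. (Indeed the lemma you are quoting is used in the paper only to prove the $h(M)>0$ result, precisely because generation by an orbit does not by itself yield a single orbit.) The paper instead applies Baire category to the partition of the Polish space $\cH(M)$ into the countably many orbits, each of which is $F_\sigma$ because $\cO_{M,k}(u)=\{v: p_M(u,v)\leq k\}$ is closed by Lemma \ref{density lemma}; hence some $\cO_{M,k}(u)$ has nonempty interior, i.e.\ contains a $\|\cdot\|_2$-ball of radius $\ee$ around some $v$. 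An arbitrary $w\in\cH(M)$ is then connected to $v$ as follows: after the central decomposition $M=\bigoplus_n Mz_n$, Proposition \ref{jung-atkinson} gives unitaries $x_{n,r}=e^{ia_{n,r}}$ in $M^\cU z_n$ conjugating $vz_n$ into $W^*(wz_n)$; writing the conjugation as $m$ small steps $e^{ia_{n,r}/m}$, each step moves $v$ by less than $\ee$ and so stays inside $\cO_{M,k}(u)$, whence the endpoint is within path-distance $2km$ of $v$ and commutes (after a limit in $r$) with $w$. This is the missing idea.

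For the last assertion your compactness argument is circular. To run it you must know $u\sim_{M^\cU}v$ for $u=(u_n)_n$, $v=(v_n)_n$ with $p_M(u_n,v_n)\to\infty$; but the only way to produce a single finite-length path between such ultraproduct elements from paths between the coordinates is to have a \emph{uniform} bound on the coordinate path lengths (this is exactly Lemma \ref{u.p.-density-lem}), which is what you are trying to prove. The subsequent theorem $\fO(M^\cU)=1$ is itself deduced in the paper \emph{from} $\fR(M)<\infty$, so you cannot invoke it here. In the paper's argument no separate compactness step is needed: the conjugation argument above connects every $w\in\cH(M)$ to $u$ by a path of length at most $1+2km+k$, where $k$, $m$, and $\ee$ come from the non-meager set and do not depend on $w$, so the uniform bound falls out of the same construction. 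Your use of Silver's dichotomy for the separable case is correct.
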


\begin{proof}
     Let us assume that $\fO(M) \leq \aleph_0$.  By the Baire Category Theorem, there is $u\in \cH(M)$ such that $\cO_M(u)$ is non-meager. In fact, defining $\cO_{M,k}(u) = \{v\in \cH(M) : p_M(u,v) \leq k\},$ we have that $\cO_{M,k}$ is non-meager for some $k.$ Therefore there is $v\in \cO_M(u)$ and $\ee>0$ such that for all $w\in \cH(M)$, $\|v-w\|_2 < \ee$ implies $p_M(u,w)\leq k.$

     There are central projections $z_n\in Z(M),$ $n\geq0$ such that $Mz_0$ is either $\{0\}$ or has diffuse center and $Mz_n$ is either $\{0\}$ or a II$_1$ factor for each $n\geq 1.$ We assume for ease of presentation that all $z_n$ are non-zero; the case where some are zero is essentially the same.

     Let $w \in \cH(M)$ be arbitrary. Our goal now is to show that $w\in\cO_M(u)$. This will prove that $\fO(M) = 1.$ By Proposition \ref{jung-atkinson}, for each $n\geq1,$ there exists $x_n = (x_{n,r})_r \in \cU(M^\cU z_n)$ such that $x_n vz_n x_n^* \in W^*(wz_n)$. In particular, we have that there is $y_n \in W^*(wz_n)\subset Mz_n$ such that $x_nvz_nx_n^* = y_n.$

    We may find self-adjoint operators $a_{n,r} \in Mz_n$ such that $\|a_{n,r}\|_\infty \leq \pi$ and $x_{n,r} = e^{ia_{n,r}}$. Pick a natural number $m$ such that $\frac{2\pi}{m}< \ee$. Then $\|e^{ia_{n,r}/m} - 1\|_\infty < \ee/2$, and so $\|e^{ia_{n,r}/m}vz_ne^{-ia_{n,r}/m} - vz_n\|_\infty < \ee$ for all $n\geq1$ and $r\in\N.$

    Therefore 
    \begin{align*}
            \left\|vz_0 + \sum_{n\geq1} e^{ia_{n,r}/m}vz_ne^{-ia_{n,r}/m} - v\right\|_2^2 &= \sum_{n\geq1} \|e^{ia_{n,r}/m}vz_ne^{-ia_{n,r}/m} - vz_n\|_2^2  \\
            &< \sum_{n\geq1} \ee \|z_n\|_2^2 \\
            &= \sum_{n\geq1} \ee\tau(z_n) \leq \ee.
    \end{align*}
Furthermore, since we are simply conjugating $v$ by the unitary $z_0 + \sum_{n\geq1}e^{ia_{n,r}/m}z_n$, we see that $vz_0 + \sum_{n\geq1}e^{ia_{n,r}/m}vz_ne^{-ia_{n,r}/m}$ is a Haar unitary in $M$. Therefore, $vz_0 + \sum_{n\geq1}e^{ia_{n,r}/m}vz_ne^{-ia_{n,r}/m} \in \cO_{M,k}(u).$ Hence $p_M(vz_0 + \sum_{n\geq1}e^{ia_{n,r}/m}vz_ne^{-ia_{n,r}/m},v) \leq 2k.$ Conjugating repeatedly, we get that $$p_M(vz_0 + \sum_{n\geq1}e^{ia_{n,r}}vz_ne^{-ia_{n,r}},v)\leq 2km.$$ 

In other words, $p_M(vz_0+\sum_{n\geq1}x_{n,r}vz_nx_{n,r}^*,v )\leq 2km.$ This means that for all $r\in\N,$ there are Haar unitaries $b_{1,r},\ldots,b_{2km-1,r}\in\cH(M)$ such that $vz_0 + \sum_{n\geq1}x_{n,r}vz_nx_{n,r}^*$ commutes with $b_{1,r}$, which commutes with $b_{2,r}$, etc and $b_{2km-1,r}$ commutes with $v.$ Therefore, for all $n \geq 1$ and all $r\in\N,$ we have that $ x_{n,r}vz_nx_{n,r}^*$ commutes with $b_{1,r}z_n$, which commutes with $b_{2,r}z_n$, etc and $b_{2km-1,r}z_n$ commutes with $vz_n.$ The $b_{1,r}z_n$ are no longer necessarily Haar unitaries in $M^\cU z_n,$ but they are still diffuse unitaries. Therefore we can find $c_{i,n,r} \in \cH(M^\cU z_n)$ such that $W^*(c_{i,n,r}) = W^*(b_{i,r}z_n)$ for all $1\leq i\leq 2km-1,$ $n\geq 1$, and $r\in\N,$ and we have, as always, that $x_{n,r}vz_nx_{n,r}^*$ commutes with $c_{1,n,r}$, which commutes with $c_{2,n,r}$, etc and $c_{2km-1,n,r}z_n$ commutes with $vz_n.$ 

Furthermore, for each $n\geq1,$ $\|x_{n,r}vz_nx_{n,r}^* - y_n\|_2 \to 0$ as $r\to\cU.$ By the proof of Lemma \ref{density lemma}, there are $c_{i,n} \in \cH(M^\cU z_n)$ such that $y_n\in Mz_n$ commutes with $c_{1,n}$, which commutes with $c_{2,n}$, etc and $c_{2km-1,n}$ commutes with $vz_n.$

Let $c_0$ be a diffuse unitary in the center of $Mz_0.$ Then $c_0 + \sum_{n\geq1} c_{i,n}$ is a path of diffuse unitaries in $M^\cU$ connecting $c_0 + \sum_{n\geq1}y_n$ to $v.$ Lastly, since $y_n \in W^*(wz_n)$ for $n\geq 1$, $w$ commutes with $c_0 + \sum_{n\geq 1}y_n.$ Thus $w \sim_M c_0 + \sum_{n\geq1}y_n \sim_M v \sim_M u.$ 

The above argument shows that when there are countably many orbits, there is only one orbit, and there exist $u\in\cH(M)$ and $k,m\in\N$ such that $p_M(w,u) \leq 1 + 2km + k$ for any $w\in\cH(M)$. We note that $k$ and $m$ did not depend on the choice of $w,$ showing the second moreover. 

The first moreover follows from Theorem \ref{silver-dichotomy}.
    
\end{proof}

The proof of the following lemma is identical to the proof of Lemma \ref{density lemma}, following a diagonalization argument.  

\begin{lem}\label{u.p.-density-lem}
    Let $(M,\tau)$ be tracial, diffuse. Suppose $(u_n)_n$ and $(v_n)_n$ are sequences of unitaries in $M$ with $p_M (u_n,v_n)\leq k$ for all $n.$ Set $u = (u_n)_n \in M^\cU$ and $v=(v_n)_n \in M^\cU$. Then $p_{M^\cU}(u,v)\leq k$.
\end{lem}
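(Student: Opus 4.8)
\textbf{Proof proposal for Lemma \ref{u.p.-density-lem}.}

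The plan is to mimic the proof of Lemma \ref{density lemma} essentially verbatim, replacing the single ``convergent-sequence'' index $n$ by a bookkeeping index that ranges over $\N$, and using a diagonalization to collapse the iterated ultrapower that appears back into $M^\cU$. First I would unwind the hypothesis $p_M(u_n,v_n)\leq k$: for each $n$ there are Haar unitaries $w_{1,n},\dots,w_{k,n}\in\cH(M^\cU)$ with $[u_n,w_{1,n}]=[w_{i,n},w_{i+1,n}]=[w_{k,n},v_n]=0$ for all $1\le i\le k-1$. Using Lemma \ref{haar lifting OP} I would lift each $w_{i,n}=(w_{i,n,m})_m$ with $w_{i,n,m}\in\cH(M)$, and set $w_{0,n,m}=u_n$, $w_{k+1,n,m}=v_n$ (these are constant in $m$). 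By the ultrapower construction, for each fixed $n$ we have $\lim_{m\to\cU}\|[w_{i,n,m},w_{i+1,n,m}]\|_2=0$ for all $0\le i\le k$, so we may pick $m(n)\in\N$ with $\|[w_{i,n,m(n)},w_{i+1,n,m(n)}]\|_2<1/n$ for all $0\le i\le k$.

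Now for $1\le i\le k$ set $w_i=(w_{i,n,m(n)})_{n\in\N}\in M^\cU$, which is a Haar unitary since each entry is. For $1\le i\le k-1$ we get $\|[w_i,w_{i+1}]\|_2=\lim_{n\to\cU}\|[w_{i,n,m(n)},w_{i+1,n,m(n)}]\|_2=0$, so $[w_i,w_{i+1}]=0$; and $\|[u,w_1]\|_2=\lim_{n\to\cU}\|[w_{0,n,m(n)},w_{1,n,m(n)}]\|_2=0$ because $w_{0,n,m(n)}=u_n$ and $u=(u_n)_n$ — here, crucially, there is no perturbation term to control, unlike in Lemma \ref{density lemma}, because $u$ itself is literally $(u_n)_n$ rather than a 2-norm limit; similarly $[w_k,v]=0$. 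Hence $w_1,\dots,w_k$ is a sequentially commuting Haar path from $u$ to $v$ in $(M^\cU)$, i.e. witnesses $p_{M^\cU}(u,v)\le k$ (after identifying $(M^\cU)$ with its image under the diagonal embedding into $(M^\cU)^\cU$, or equivalently invoking Proposition \ref{prop-u.p-of-u.p} so that the path may be taken inside $M^\cU$ itself).

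The only genuinely delicate point is the one just flagged: $p_{M^\cU}$ is by definition computed in $(M^\cU)^\cU$, whereas the path $w_1,\dots,w_k$ constructed above already lives in $M^\cU$. So either I would observe that a path inside $M^\cU$ is a fortiori a path inside $(M^\cU)^\cU$ via the diagonal inclusion, or — cleaner — I would quote Proposition \ref{prop-u.p-of-u.p}, which identifies $\sim^0_{M^\cU}$ with $\sim_{M^\cU}$, so that the path in $M^\cU$ directly certifies $p_{M^\cU}(u,v)\le k$. Everything else is a routine diagonalization copied from Lemma \ref{density lemma}, and indeed the absence of the $\varepsilon$-perturbation estimates there makes the present argument strictly shorter.
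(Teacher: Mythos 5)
Your proposal is correct and is exactly the argument the paper intends: the paper gives no separate proof, stating only that it is ``identical to the proof of Lemma \ref{density lemma}, following a diagonalization argument,'' and you carry out precisely that diagonalization (lift each $w_{i,n}$ via Lemma \ref{haar lifting OP}, choose $m(n)$ so the commutators are $<1/n$, and reassemble along $n$), correctly noting that the $\varepsilon$-perturbation estimates of Lemma \ref{density lemma} are not needed here since $u=(u_n)_n$ exactly. Your handling of the point that the resulting path lives in $M^\cU$ rather than $(M^\cU)^\cU$ — via the diagonal inclusion or Proposition \ref{prop-u.p-of-u.p} — is also the right (and only) subtlety to flag.
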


\begin{thm}
    Assume that $\fO(M) =1.$ Then $ \fO(M^\cU) = 1$ and $\fR(M) = \fR(M^\cU)$.
    
\end{thm}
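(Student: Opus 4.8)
The plan is to prove the two assertions separately, using the previous theorem (single orbit implies finite diameter) as the engine and Lemma \ref{u.p.-density-lem} together with Lemma \ref{haar lifting OP} as the transfer tools. First I would establish $\fR(M^\cU) \le \fR(M)$. By the previous theorem, since $\fO(M) = 1$, there is a finite bound $K := \fR(M) < \infty$, so $p_M(u,v) \le K$ for all $u,v \in \cH(M)$. Now take arbitrary $u, v \in \cH(M^\cU)$; by Lemma \ref{haar lifting OP} we may lift them to $u = (u_n)_n$, $v = (v_n)_n$ with $u_n, v_n \in \cH(M)$. Then $p_M(u_n, v_n) \le K$ for every $n$, so Lemma \ref{u.p.-density-lem} gives $p_{M^\cU}(u,v) \le K$. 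In particular every pair of Haar unitaries in $M^\cU$ is related, so $\fO(M^\cU) = 1$, and the supremum defining $\fR(M^\cU)$ is at most $K = \fR(M)$.

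Next I would prove the reverse inequality $\fR(M) \le \fR(M^\cU)$, but this is exactly the content of the already-stated monotonicity Proposition (the one immediately preceding Theorem \ref{one-orbit}), which asserts $\fR(M) \le \fR(M^\cU)$ with no hypothesis on the number of orbits. So combining the two directions yields $\fR(M) = \fR(M^\cU)$, and together with $\fO(M^\cU) = 1$ from the first paragraph this completes the proof.

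The one subtlety I would be careful about is the role of the ambient ultrafilter: $\sim_{M^\cU}$ is defined via passing to an ultrapower $(M^\cU)^\cV$ of $M^\cU$, not via $M^\cU$ itself. This is where Lemma \ref{u.p.-density-lem} is doing real work — it directly produces a commutator path inside $M^\cU$ (equivalently, witnesses $\sim^0_{M^\cU}$), and by Proposition \ref{prop-u.p-of-u.p} we have $\sim^0_{M^\cU} = \sim_{M^\cU}$ since $\cU$ is countably cofinal, so no further iterated ultrapower is needed. I expect this bookkeeping — making sure the path produced by Lemma \ref{u.p.-density-lem} genuinely certifies the relation $\sim_{M^\cU}$ in the sense of the definition — to be the only real obstacle; the numerical estimates themselves are routine given the cited lemmas.

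\begin{proof}
    By Theorem \ref{one-orbit}, since $\fO(M) = 1$ we have $K := \fR(M) < \infty$, so $p_M(u,v) \le K$ for all $u,v\in\cH(M)$. Let $u,v\in\cH(M^\cU)$ be arbitrary. By Lemma \ref{haar lifting OP}, write $u = (u_n)_n$ and $v = (v_n)_n$ with $u_n,v_n\in\cH(M)$. Then $p_M(u_n,v_n)\le K$ for all $n$, so by Lemma \ref{u.p.-density-lem} (applied with $\mathcal{U}$ in the role of the iterating ultrafilter, which is legitimate by Proposition \ref{prop-u.p-of-u.p} since $\sim_{M^\cU}^0 = \sim_{M^\cU}$) we get $p_{M^\cU}(u,v)\le K$. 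In particular $u\sim_{M^\cU} v$, so $\fO(M^\cU) = 1$, and taking the supremum over all such pairs gives $\fR(M^\cU)\le K = \fR(M)$. The reverse inequality $\fR(M)\le\fR(M^\cU)$ is the monotonicity proposition preceding Theorem \ref{one-orbit}. Hence $\fR(M) = \fR(M^\cU)$.
\end{proof}
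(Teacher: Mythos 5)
Your proof is correct and follows essentially the same route as the paper: finiteness of $\fR(M)$ from the preceding theorem, lifting Haar unitaries via Lemma \ref{haar lifting OP}, transfer of path-length bounds via Lemma \ref{u.p.-density-lem}, and the monotonicity proposition for the reverse inequality. The only cosmetic difference is that you handle both conclusions in a single application of the density lemma to arbitrary pairs, where the paper first routes every lift through a fixed $v\in\cH(M)$ and then treats the diameter bound separately; your bookkeeping about $\sim^0_{M^\cU}$ versus $\sim_{M^\cU}$ is also correctly resolved.
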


\begin{proof}
    By Theorem \ref{one-orbit}, we have that $\fR(M) < \infty.$

    Suppose $u\in \cH(M^\cU)$. Using Lemma \ref{haar lifting OP}, lift to Haar unitaries $(u_n)_n$. Since there is only one orbit of $\sim_M$, all of the $u_n$ are in one orbit, say $\mathcal O(v)$. Therefore we have commutator paths (of length at most $\fR(M)$) in $\cH(M^\cU)$ from $u_n$ to $v$ for all $n.$ Use Lemma \ref{u.p.-density-lem} to get a path of length at most $\fR(M)$ from $u$ to $v$.

    Since every $u\in \cH(M^\cU)$ is equivalent to a unitary from $\cH(M),$ we see that $\sim_{M^\cU}$ also only has one orbit.

    Now suppose $u\sim_{M^\cU} v$ in $\cH(M^\cU)$. We know that there is $w \in \cH(M)$ such that $u\sim_{M^\cU} w$ and $v\sim_{M^\cU} w$ from above. That means we can lift $u$ and $v$ to Haar unitaries $(u_n)_n$ and $(v_n)_n$ such that $u_n\sim_M w \sim_M v_n$ for all $n$. Therefore there are commutator paths of length at most $\fR(M)$ between $u_n$ and $v_n$ for each $n.$ Use Lemma \ref{u.p.-density-lem} to get a path of length at most $\fR(M)$ between $u$ and $v.$ Therefore $\fR(M^\cU)$ is bounded above by $\fR(M).$ 
\end{proof}
 
\subsection{Graph products having large commutation diameter}

We shall now prove the following result: 

\begin{thm}\label{thm-graph-radius}
        Let $M$ be a  graph product of diffuse tracial von Neumann  algebras where the underlying graph is   connected and has diameter at least 4. Then $\mathfrak{R}(M)\geq 4$ and moreover $M$ is not elementarily equivalent to the \cite{exoticCIKE} non-Gamma II$_1$ factors. 
\end{thm}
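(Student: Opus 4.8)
The plan is to establish the quantitative statement $\mathfrak{R}(M) \geq 4$ for a graph product $M = *_{v,\Gamma} M_v$ with $\Gamma$ connected of diameter $\geq 4$, and then combine it with the known value of $\mathfrak{R}$ for the \cite{exoticCIKE} factors (which is at most $6$, but more importantly satisfies $\mathfrak{O} = 1$) together with the fact that $\mathfrak{R}$ and $\mathfrak{O}$ are elementary-equivalence invariants (Theorem \ref{one-orbit} and the following theorem). Since the \cite{exoticCIKE} factor has $\mathfrak{O} = 1$, and since \cite{charlesworth2023strong2} shows such a graph product $M$ is non-Gamma (so by Theorem \ref{char-gamma} it is not the case that every pair of Haar unitaries is connected by a path of length $\leq 2$) — but that alone does not separate it from the \cite{exoticCIKE} factor, which is also non-Gamma. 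The genuine separation must come from $\mathfrak{R}$: one shows $\mathfrak{R}(M) \geq 4$ while the \cite{exoticCIKE} factor has $\mathfrak{R} \leq 6$; this is not yet a contradiction. So the real content I would aim for is sharper: I expect the intended statement is that $M$ has the property that there exist two Haar unitaries $u_1 \in M_{v}$, $u_2 \in M_{w}$ sitting in vertex algebras at graph-distance $\geq 4$ such that $p_M(u_1,u_2) \geq 4$, whereas in the \cite{exoticCIKE} factor one has a \emph{uniform} bound $p \leq 6$ \emph{and} additionally a stronger structural feature (density of pairs with short paths of a very specific form). To distinguish them up to elementary equivalence using only invariants, the cleanest route is: $\mathfrak{R}$ is an elementary equivalence invariant when $\mathfrak{O}=1$; if $M \equiv N_{\mathrm{CIKE}}$ then $\mathfrak{O}(M) = \mathfrak{O}(N_{\mathrm{CIKE}}) = 1$, hence $\mathfrak{R}(M) = \mathfrak{R}(N_{\mathrm{CIKE}}) \leq 6$; so it suffices to show $\mathfrak{R}(M) = \infty$, or at least $\mathfrak{R}(M) > 6$, OR to show $\mathfrak{O}(M) \neq 1$. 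Given the theorem only claims $\mathfrak{R}(M) \geq 4$, the separation argument must instead run through a finer invariant than bare $\mathfrak{R}$ — presumably the precise structure of lifts of commutators, i.e. the asymptotic orthogonality phenomenon advertised in the introduction. I would therefore prove a lemma of the following shape.

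\emph{Key lemma (asymptotic orthogonality in graph products).} Let $u_1 \in \mathcal{H}(M_{v_0})$ and $u_2 \in \mathcal{H}(M_{v_4})$ where $d_\Gamma(v_0, v_4) \geq 4$. Suppose $w_1, w_2 \in \mathcal{H}(M^{\mathcal U})$ satisfy $[u_1, w_1] = [w_1, w_2] = [w_2, u_2] = 0$ (a path of length $3$). Then one derives a contradiction — i.e. $p_M(u_1, u_2) \geq 4$ — by analyzing the Moore–Dykema / word-length structure of $M^{\mathcal U}$. Concretely: the relative commutant of $M_{v_0}$ in $M$ is the graph product over the link-complement subgraph, and iterating, a unitary $w_1$ commuting with a Haar unitary of $M_{v_0}$ must, in the ultrapower, "live near" the star of $v_0$ in a quantified sense; similarly $w_2$ lives near the star of $v_4$; but a Haar unitary $w_1$ commuting with such a $w_2$ forces the two stars to be close in $\Gamma$, contradicting $d_\Gamma(v_0,v_4)\geq 4$. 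The technical engine is a graph-product analogue of the free-product orthogonality used in \cite{houdayer2023asymptotic}, Theorem \ref{HI independence}: one decomposes $M = M_{\mathrm{star}(v_0)} *_{M_{\mathrm{link}(v_0)}} M_{\mathrm{rest}}$ as an amalgamated free product, applies the intertwining/orthogonality analysis of relative commutants of Haar unitaries across the amalgam, and propagates the estimate along a geodesic $v_0, v_1, v_2, v_3, v_4$.

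\emph{Steps, in order.} (1) Set up the amalgamated free product decompositions of $M$ along stars of geodesic vertices, and record that a Haar unitary of a vertex algebra has trivial $E_B$-moments in the relevant amalgam (so Theorem \ref{HI independence} applies). (2) Prove that if $w$ is a Haar unitary of $M^{\mathcal U}$ commuting with $u_1 \in \mathcal{H}(M_{v_0})$, then $w$ is "supported" (in the precise sense of being freely independent, after centering, from vertex algebras far from $v_0$, or of having $E$-projection onto a corner) — this is where the new "finer technical structure of lifts of certain commutators" comes in, and it is the main obstacle. (3) Chain two such statements for $w_1$ (near $v_0$) and $w_2$ (near $v_4$): $[w_1,w_2]=0$ with both Haar then forces a commuting pair whose supports are graph-distance $\geq 2$ apart, contradicting that in a graph product algebras at distance $\geq 2$ are freely independent (hence no nontrivial commuting Haar unitaries between them), giving $p_M(u_1,u_2)\geq 4$, i.e. $\mathfrak{R}(M)\geq 4$. (4) Upgrade to the elementary-inequivalence conclusion: actually I expect one must show the stronger $\mathfrak{O}(M)=1$ fails or $\mathfrak{R}(M)>6$ — more likely the paper shows directly that the \emph{pair} $(\mathfrak{O},\text{"diameter-$\geq 4$ obstruction"})$ is inconsistent with the defining density property of the \cite{exoticCIKE} factor, by noting that in $N_{\mathrm{CIKE}}$ a dense set of pairs of orthogonal Haar unitaries admits paths of the special length-$n$ handle form, and this density statement transfers across elementary equivalence (it is a $\forall\exists$ first-order-in-the-continuous-logic condition), whereas the graph product with diameter $\geq 4$ provably lacks it by the asymptotic orthogonality lemma.

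\emph{Main obstacle.} The hard part is Step (2)–(3): formulating and proving the correct "a commuting Haar unitary is graph-local" statement in the ultrapower of an arbitrary graph product (not just a free product), i.e. the graph-product asymptotic orthogonality lemma. In a free product this is \cite{houdayer2023asymptotic}; in a graph product the amalgam $M_{\mathrm{link}(v)}$ is genuinely present and one must control intertwining into it, which requires either an Ioana–Peterson–Popa–type analysis (Theorem 1.1 of \cite{IPP08}, as used in Proposition \ref{factorlemmanew}) of relative commutants across the amalgam, or a direct Dykema-style word-length/central-limit computation à la the $L^p$ methods of \cite{mei, houdayer2023asymptotic}. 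Propagating the resulting local-support estimate along a length-$4$ geodesic without loss that overwhelms the distance budget is the delicate bookkeeping step. Everything else — invariance of $\mathfrak R, \mathfrak O$ under elementary equivalence, the value $\mathfrak R(N_{\mathrm{CIKE}}) \leq 6$, $\mathfrak O(N_{\mathrm{CIKE}})=1$ — is already in hand from the earlier sections.
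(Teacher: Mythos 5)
Your outline matches the paper's strategy at every structural joint: the single amalgamated free product decomposition $M\cong M_1*_B M_2$ where $B$ is generated by the vertices at distance exactly $2$ from $v_0$ (the paper does exactly this, not an iterated propagation along the geodesic), the application of the Houdayer--Ioana free independence theorem (Theorem \ref{HI independence}) to conclude that $w_1\in\{u_1\}'\cap\cH(M^\cU)$ and $w_2\in\{u_2\}'\cap\cH(M^\cU)$ are freely independent and hence cannot commute, and the correct resolution of the elementary-equivalence step: the separation does not come from comparing the numerical values of $\mathfrak R$, but from the fact that in $\mathcal S_2(L(SL_3(\Z)))$ \emph{every} pair of orthogonal Haar unitaries admits a length-$3$ path (density of handles plus Lemma \ref{density lemma}), this property persists in all ultrapowers, and the graph product's ultrapower contains an orthogonal pair with no length-$3$ path. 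Your second-guessing in the middle of the write-up about whether one needs $\mathfrak R(M)>6$ or $\mathfrak O(M)\neq 1$ is ultimately discarded in favor of the right mechanism, so that part is fine.

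The genuine gap is that the key lemma of Step (2) --- that $w\in\{u_1\}'\cap\cH(M^\cU)$ satisfies $E_{B^\cU}(w)=0$ --- is asserted and flagged as ``the main obstacle'' but not proved, and this lemma is the entire nonformal content of the theorem; everything else is bookkeeping around Theorem \ref{HI independence}. Neither of the two routes you gesture at (an \cite{IPP08}-type intertwining analysis, or $L^p$ methods) is what the paper uses, and it is not clear either would close the gap as stated. The paper's argument (Lemma \ref{annoying lemma}) is an explicit Fourier-type computation: lift $w=(w_n)_n$, expand each $w_n$ in the orthogonal family of reduced words, isolate the coefficients $\lambda_{i,j,n}$ of the words $u_1^jv_2^iu_1^{-j}$ (with $i\neq 0$), and observe that conjugation by $u_1$ acts as the shift $j\mapsto j+1$ on this family, so that $\|u_1w_nu_1^{-1}-w_n\|_2^2\geq\sum_{i,j}|\lambda_{i,j,n}-\lambda_{i,j+1,n}|^2$. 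The quantitative real-analytic Lemma \ref{lemma-lower-bound} then shows that if the mass $\sum_i|\lambda_{i,0,n}|^2$ at $j=0$ (which computes $\|E_{(\{v_2\}'')^\cU}(w)\|_2^2$ in the limit) stays bounded below by $\delta^2$, the shift-difference is bounded below by $\delta^4/(3+18\delta^2)$, contradicting $[u_1,w]=0$. Without this quantitative estimate (or a substitute), your Steps (2)--(3) do not go through, so the proposal as written does not constitute a proof.
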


The proof of the Theorem above follows identically to the illustrative example below (Theorem \ref{thm-len3}). We also elaborate more in the porism after the proof of Theorem \ref{thm-len3} the general outline. Consider the right angled Artin group whose underlying graph is the line segment with 5 vertices; i.e, the graph product of $L\mathbb Z$ over the following graph:
\begin{center}
\tikz\foreach \i in {1,...,5}
    \fill (\i*72 + 90:1) coordinate (n\i) circle(2 pt)
      \ifnum \i > 4 {(n1) edge (n2)}{(n1) edge (n5)}{(n3) edge (n4)}{(n4) edge (n5)} \fi;
\end{center}
Then $M$ is generated by 5 Haar unitaries, namely $u_1$, $v_1$, $v_2,$ $v_3,$ and $u_2$ which satisfy the following relations:

$$[u_1,v_1] = [v_1,v_2] = [v_2,v_3] = [v_3,u_2] = 0.$$

\begin{thm}
\label{thm-len3}
    In the $M$ described above, $p_M(u_1,u_2) = 4.$
\end{thm}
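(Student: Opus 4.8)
The upper bound $p_M(u_1,u_2)\le 4$ is already witnessed inside $M$ itself by the chain $u_1, v_1, v_2, v_3, u_2$, so the content is the lower bound $p_M(u_1,u_2)\ge 4$, i.e., ruling out paths of length $1$, $2$, and $3$ in $M^{\cU}$. The plan is to exploit the graph product structure: write $M = A *_{B} C$ as an amalgamated free product where $A = W^*(u_1, v_1, v_2)$ (the subgraph on the first three vertices, a line), $B = W^*(v_1, v_2)$, and $C = W^*(v_2, v_3, u_2)$, and similarly other natural decompositions. The point is that $u_1$ and $u_2$ lie in ``far apart'' pieces, and commutation with Haar unitaries in the ultrapower forces — via Theorem \ref{HI independence} (asymptotic orthogonality / free independence in amalgamated free products) — strong positional constraints on any intermediate unitaries $w_i$.

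Concretely, first dispose of length $1$: $[u_1,u_2]\ne 0$ since in the graph product distinct non-adjacent vertex algebras are freely independent, so $u_1, u_2$ do not commute. For length $2$: suppose $w\in\cH(M^{\cU})$ with $[u_1,w]=[w,u_2]=0$. Decompose $M = W^*(u_1) *_{\C} (\text{rest generated appropriately})$ — more carefully, use that $u_1$ is freely independent from $W^*(v_2,v_3,u_2)$ over $\C$ after noting the graph has no edge from vertex $1$ to vertices $3$ or to $u_2$, though the edge $u_1 - v_1$ is present. The cleanest route is to apply Theorem \ref{HI independence} to a decomposition $M = M_1 *_{B} M_2$ chosen so that $u_1 \in M_1$, $u_2 \in M_2$, both with vanishing conditional expectation of all powers onto $B$; then any $w$ commuting with $u_1$ with $E_{B^{\cU}}(w)=0$ and (a second intermediate vertex unitary) commuting with $u_2$ are forced to be freely independent, but a single $w$ commuting with both cannot be freely independent from itself unless it fails the $E_{B^{\cU}}$-vanishing hypothesis, which then must be analyzed separately by absorbing the $B^{\cU}$-part. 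Iterating this bookkeeping for a length-$3$ path $u_1, w_1, w_2, u_2$ — where now $[u_1,w_1]=[w_1,w_2]=[w_2,u_2]=0$ — is where the real work lies: one shows $w_1$ must be ``supported near vertex $1$'' and $w_2$ ``near $u_2$'', but then $[w_1,w_2]=0$ is incompatible with the graph distance being $\ge 4$ (there is no common neighbor, and no edge, linking the relevant regions).

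The technical engine is a lifting-plus-orthogonality argument: lift $w_1 = (w_1^{(n)})_{\cU}$ and $w_2=(w_2^{(n)})_{\cU}$ to Haar unitaries in $M$ (Lemma \ref{haar lifting OP}), use Theorem \ref{HI lifting} to arrange independence of lifts where free independence holds in the ultrapower, and then run a word-length / conditional-expectation estimate in the graph product $M$ to contradict approximate commutation $\|[w_1^{(n)},w_2^{(n)}]\|_2 \to 0$. The diameter-$\ge 4$ hypothesis is exactly what guarantees that the ``neighborhood of vertex $1$'' and the ``neighborhood of $u_2$'' are disjoint and non-adjacent in the graph, so that the only elements commuting with both a unitary near one and a unitary near the other would have to be scalars — contradicting Haar-ness.

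\textbf{Main obstacle.} The hard part will be the length-$3$ case: controlling the ``position'' of an arbitrary Haar unitary $w_1 \in \cH(M^{\cU})$ merely from $[u_1,w_1]=0$. Commutant computations in graph product von Neumann algebras are delicate — one does not have a clean structure theorem for $\{u_1\}'\cap M^{\cU}$, only that it is generated by the ``star'' of vertex $1$ together with the relative commutant picked up in the ultrapower — so the argument must carefully combine Popa-type intertwining (to locate $w_1$ up to unitary conjugacy inside the star algebra) with the Houdayer–Ioana asymptotic orthogonality theorem to propagate constraints along the path. Handling the case where intermediate unitaries have nonzero conditional expectation onto the relevant amalgam (so Theorem \ref{HI independence} does not apply directly) will require an additional decomposition/absorption step, and getting that bookkeeping right is the crux.
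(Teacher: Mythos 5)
Your high-level architecture coincides with the paper's: decompose $M$ as an amalgamated free product separating $u_1$ from $u_2$ and apply Theorem \ref{HI independence} to conclude that any $w_1\in\{u_1\}'\cap\cH(M^\cU)$ and $w_2\in\{u_2\}'\cap\cH(M^\cU)$ are freely independent, hence noncommuting; this kills length-$3$ paths and, taking $w_1=w_2$, length-$2$ paths as well (length $1$ fails since $u_1,u_2$ are freely independent). One correction to your setup: the relevant decomposition is $M=\{u_1,v_1,v_2\}''*_{\{v_2\}''}\{v_2,v_3,u_2\}''$, so the amalgam is $\{v_2\}''$ alone, not $W^*(v_1,v_2)$, since $v_1$ commutes with neither $v_3$ nor $u_2$.

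The genuine gap sits exactly where you flag the crux: verifying the hypothesis $E_{(\{v_2\}'')^{\cU}}(w)=0$ for an arbitrary Haar $w$ commuting with $u_1$. Your proposal offers only placeholders for this step (Popa-type intertwining to ``locate'' $w_1$ in the star algebra, an unspecified ``absorption'' of the part with nonzero conditional expectation), and neither is what is needed: there is no residual case to absorb, because one proves the conditional expectation \emph{always} vanishes. The paper's Lemma \ref{annoying lemma} does this by a direct Fourier/$\ell^2$ computation: expand a lift $w_n$ of $w$ in the mutually orthogonal words $u_1^jv_2^iu_1^{-j}$ ($i\neq 0$) plus an orthogonal remainder; conjugation by $u_1$ shifts the index $j$ by one and preserves the remainder's subspace, so $\|u_1w_nu_1^{-1}-w_n\|_2^2$ dominates the discrete-derivative sum $\sum_{i,j}|\lambda_{i,j,n}-\lambda_{i,j+1,n}|^2$; if the $j=0$ slice carried mass at least $\delta^2$ while the total mass is at most $1$, the elementary real-analytic Lemma \ref{lemma-lower-bound} bounds that sum below by $\delta^4/(3+18\delta^2)$, contradicting $[u_1,w]=0$. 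Without this quantitative ingredient (or an equivalent, such as the $L^p$-space methods of \cite{houdayer2023asymptotic}) the free-independence theorem cannot be invoked and the lower bound does not follow. Note also that your proposed endgame is heavier than necessary: no lifting of $w_1,w_2$ via Theorem \ref{HI lifting} and no approximate-commutation estimate in $M$ is required, since once $w_1$ and $w_2$ are freely independent Haar unitaries in $M^\cU$ they simply cannot commute, and the proof terminates there.
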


We present now some technical lemmas regarding specific structural properties of $M$, as well as a pure real-analytic lemma. We thank Srivatsa Srinivas for helping improve our original bound of $\Theta(c^6)$ to the optimal $\Theta(c^4).$

\begin{lem}\label{lemma-lower-bound}
    Let $0 < c \leq 1.$ Define $$X = \left\{\sum_{i,j\in\mathbb Z}|\lambda_{i,j}-\lambda_{i,j+1}|^2 \middle| \lambda_{i,j}\in\mathbb C, \ \sum_{i,j\in\Z}|\lambda_{i,j}|^2 \leq 1, \ \text{and} \ \sum_{i\in\Z}|\lambda_{i,0}|^2 \geq c^2\right\}.$$
    Then $\frac{c^4}{3+18c^2}$ is a lower bound for $X.$ (Note that $X \subset \mathbb R_{\geq0}.$)
\end{lem}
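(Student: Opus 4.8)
The plan is to separate the two indices. For each fixed row $i$ I will bound the ``Dirichlet energy'' $\sum_j|\lambda_{i,j}-\lambda_{i,j+1}|^2$ from below in terms of the total mass $S_i:=\sum_j|\lambda_{i,j}|^2$ of that row and the mass $m_i:=|\lambda_{i,0}|^2$ it carries at $j=0$, and then recombine the rows with a single Cauchy--Schwarz step, using the two hypotheses $\sum_i m_i\ge c^2$ and $\sum_i S_i\le 1$. The key one-variable estimate is: for any $(a_j)_{j\in\Z}\in\ell^2$, writing $m=|a_0|^2$, $S=\sum_j|a_j|^2$ and $D=\sum_j|a_j-a_{j+1}|^2$, one has $D\ge \frac{m^2}{S+m}$.

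\textbf{Proof of the one-variable estimate.} Set $d_j=a_j-a_{j+1}$. Since $(a_j)\in\ell^2$ we have $a_j\to 0$ as $j\to\pm\infty$, so telescoping gives $m=\sum_{j\ge 0}(|a_j|^2-|a_{j+1}|^2)$ and $m=\sum_{j\le -1}(|a_{j+1}|^2-|a_j|^2)$. Using $a_{j+1}=a_j-d_j$ one gets $|a_j|^2-|a_{j+1}|^2=2\Re(\overline{a_j}d_j)-|d_j|^2\le 2|a_j||d_j|$ and symmetrically $|a_{j+1}|^2-|a_j|^2\le 2|a_{j+1}||d_j|$. Substituting into the telescoped identities and applying Cauchy--Schwarz separately on the forward and backward half-lines yields, with $P=\sum_{j\ge1}|a_j|^2$ and $Q=\sum_{j\le-1}|a_j|^2$ (so $m+P+Q=S$),
\[
m\le 2\sqrt{m+P}\;\Big(\textstyle\sum_{j\ge0}|d_j|^2\Big)^{1/2},\qquad m\le 2\sqrt{m+Q}\;\Big(\textstyle\sum_{j\le-1}|d_j|^2\Big)^{1/2}.
\]
Squaring gives $\sum_{j\ge0}|d_j|^2\ge \frac{m^2}{4(m+P)}$ and $\sum_{j\le-1}|d_j|^2\ge\frac{m^2}{4(m+Q)}$; adding and using $\frac1x+\frac1y\ge\frac4{x+y}$ yields $D\ge\frac{m^2}{2m+P+Q}=\frac{m^2}{S+m}$.

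\textbf{Recombination.} Apply the estimate with $a_j=\lambda_{i,j}$ for each $i$ and sum over $i$; then invoke the Cauchy--Schwarz inequality $\sum_i\frac{m_i^2}{S_i+m_i}\ge\frac{(\sum_i m_i)^2}{\sum_i(S_i+m_i)}$. Because $c^2\le\sum_i m_i\le\sum_i S_i\le1$, the denominator is $\le 2$ and the numerator is $\ge c^4$, so every element of $X$ is at least $\frac{c^4}{2}$, and in particular at least $\frac{c^4}{3+18c^2}$.

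\textbf{Main obstacle.} The whole difficulty is concentrated in the one-variable inequality, and more precisely in getting the right power of $c$: a naive argument (walk $L$ steps from $a_0$, observe that some $|a_L|^2\le S/L$ by pigeonhole, and balance $L$) produces a bound of the shape $m^2\lesssim\sqrt{DS}$, which after Cauchy--Schwarz only gives $X\gtrsim c^6$ --- presumably the originally weaker estimate referred to just before the statement. The telescoping/Cauchy--Schwarz computation above is what upgrades the exponent to the optimal $4$ and, as a bonus, makes the constant clean; if one prefers to carry a slightly lossy constant such as $3+18c^2$, a correspondingly cruder version of the same telescoping argument suffices.
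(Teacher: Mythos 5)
Your proof is correct, and it takes a genuinely different route from the paper's. The paper packages the columns $v_j=(\lambda_{i,j})_{i}$ into $\ell^2(\Z)$ and argues by contradiction: if the total energy $\sum_j\|v_j-v_{j+1}\|^2$ were below $\varepsilon^2=\frac{c^4}{3+18c^2}$, then Cauchy--Schwarz on the telescoping sum forces $\|v_n\|\geq\|v_0\|-\varepsilon\sqrt{|n|}$ for $|n|\leq N\approx c^2/\varepsilon^2$, and summing these contradicts $\sum_j\|v_j\|^2\leq1$; the specific constant $3+18c^2$ is just what makes that final sum exceed $1$. You instead prove a clean one-row capacity estimate $D\geq\frac{m^2}{S+m}$ by telescoping $|a_0|^2$ on each half-line, estimating $|a_j|^2-|a_{j+1}|^2\leq2|a_j||d_j|$, applying Cauchy--Schwarz, and then recombining the rows with the Engel-form inequality $\sum_i\frac{m_i^2}{S_i+m_i}\geq\frac{(\sum_i m_i)^2}{\sum_i(S_i+m_i)}$. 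All the steps check out (including the edge cases $m_i=0$ and $D=\infty$, which are vacuous), and your argument actually yields the stronger lower bound $\frac{c^4}{2}$, which still sits below the $\approx\frac{4c^4}{3}$ achieved by the paper's tightness example, so it is consistent with the remark following the lemma. One small observation: your one-variable estimate holds verbatim for $\ell^2(\Z)$-valued sequences (the identity $\|v_j\|^2-\|v_{j+1}\|^2=2\Re\langle v_j,d_j\rangle-\|d_j\|^2$ replaces the scalar one), so applied directly to the paper's columns $v_j$ with $m=\|v_0\|^2\geq c^2$ and $S\leq1$ it gives $\frac{c^4}{2}$ in one stroke, letting you skip the row decomposition and the final Cauchy--Schwarz recombination entirely. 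Your guess about the origin of the $c^6$ bound also matches the acknowledgement in the paper.
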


\begin{proof}
    Set $v_j := (\lambda_{i,j})_{i\in\Z}$. Note that since the sum of all the squares of the $\lambda_{i,j}$ is bounded by 1, each $v_j \in\ell^2(\mathbb Z)$. Furthermore, we have the following conditions on the $v_j$: 
    \begin{align*}
        \|v_0\| \geq c \\
        \sum_{j\in\mathbb Z}\|v_j\|^2 \leq 1.
    \end{align*}
    
    We can therefore rewrite $X$ as: 
    $$\left\{\sum_{j\in \mathbb Z}\|v_j - v_{j+1}\|^2  \middle| v_j\in \ell^2(\mathbb Z), \ \sum_{j} \|v_j\|^2 \leq 1, \& \|v_0\| \geq c  \right\}.$$
    
    Set $\varepsilon^2 = \frac{c^4}{3+18c^2}$. Since $\frac{c^4}{3+18c^2}$ is an increasing function, we may assume without loss of generality that $\|v_0\| = c.$ Let $N\in\N$ be such that $(N+1)\varepsilon^2 \leq c^2 < (N+2)\ee^2$.
    
    Assume towards a contradiction that $\varepsilon^2$ is not a lower bound for $X$, in other words, assume that 
    $$\sum_{j\in \mathbb Z}\|v_j - v_{j+1}\|^2 < \varepsilon^2. $$

    Note that for $n\geq 1,$
    \begin{align*}
        \|v_n - v_0\|^2 &= \left\|\sum_{i=1}^nv_i-v_{i-1}\right\|^2\\
        &\leq \left(\sum_{i=1}^n\|v_i-v_{i-1}\|\right)^2 \\
        &\leq n\sum_{i=1}^n\|v_i-v_{i-1}\|^2 \\
        &< n\ee^2
    \end{align*}
    where for the second inequality we applied Cauchy-Schwarz. We get a similar inequality for $n\leq -1.$
    
    We therefore have that for all $|n| \leq N$ we have $\|v_n\| \geq \|v_0\| - \varepsilon\sqrt{|n|}\geq 0.$ Therefore
    \begin{align*}
        \sum_{n\in\mathbb Z} \|v_n\|^2 &\geq \sum_{n=-N}^N \|v_n\|^2 \\
        &\geq \sum_{n=-N}^N (\|v_0\| - \ee\sqrt{|n|})^2.
    \end{align*}

    Using that $\sum_{n=0}^N n = \frac{N(N+1)}{2}$ and $\sum_{n=0}^N \sqrt{n} < \frac{2}{3}(N+1)^{3/2}$ we see that 
    $$ \sum_{n=-N}^N (\|v_0\| - \ee\sqrt{|n|})^2 > (2N+1)\|v_0\|^2 + N(N+1)\varepsilon^2 - \frac{8}{3}\|v_0\|\ee (N+1)^{3/2}.$$

   Using our assumption that $(N+1)\varepsilon^2 \leq c^2 < (N+2)\ee^2$, we get that 

    $$(2N+1)\|v_0\|^2 + N(N+1)\varepsilon^2 - \frac{8}{3}\|v_0\|\ee (N+1)^{3/2} \geq \frac{1}{3}\frac{c^4}{\ee^2} - 6c^2 + 2\ee^2 > \frac{1}{3}\frac{c^4}{\ee^2} - 6c^2. $$

    By our choice of $\ee,$ we have that $\frac{1}{3}\frac{c^4}{\ee^2} - 6c^2 = 1.$

    All together, we see that
    \begin{align*}
        \sum_{n\in\mathbb Z} \|v_n\|^2 &\geq \sum_{n=-N}^N (\|v_0\| - \varepsilon\sqrt{|n|})^2 \\
        &> (2N+1)\|v_0\|^2 + N(N+1)\varepsilon^2 - \frac{8}{3}\|v_0\|\ee(N+1)^{3/2} \\
        &>\frac{1}{3}\frac{c^4}{\ee^2} - 6c^2\\
        &= 1,
    \end{align*}
    which contradicts our hypothesis that $\sum_j \|v_j\|^2 \leq 1.$

\end{proof}

\begin{remark}
    The order of the bound computed in Lemma \ref{lemma-lower-bound} is tight. Indeed, if $\|v_0\| = c$, pick $N \approx \frac{3}{2c^2}$, and for $|n|\leq N$ set $v_n = \frac{N-|n|}{N}v_0,$ and $v_n = 0$ otherwise. Then $\sum_{n}\|v_n-{v_{n+1}}\|^2 \approx 2N\frac{c^2}{N^2} \approx \frac{4c^4}{3}$.

\end{remark}

\begin{lem}\label{annoying lemma}
    Suppose $w \in \{u_1\}' \cap \cH(M^\cU)$. Then $E_{(\{v_2\}'')^\cU}(w) = 0.$
\end{lem}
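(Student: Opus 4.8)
\textbf{Proof proposal for Lemma \ref{annoying lemma}.}

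The plan is to understand the structure of the relative commutant $\{u_1\}' \cap M^\cU$ inside the graph product $M$, where $u_1$ sits at one endpoint of the line graph and $v_2$ is the central vertex (distance $2$ from $u_1$). The key structural fact is that, since the only vertex adjacent to $u_1$ in the graph is $v_1$, a graph product decomposition $M = M_1 *_{\{u_1\}'' \otimes \{v_1\}''} (\text{rest})$ should be available, in which $M_1 = \{u_1,v_1\}''$ and $\{v_2\}''$ lives in the complementary factor, with $\{v_2\}''$ being orthogonal to $\{u_1\}''$ in an appropriate sense (in fact $\{v_2\}''$ is freely independent from $\{u_1\}''$ over the amalgam, since $v_2$ is not adjacent to $u_1$). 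The heart of the matter is then an asymptotic-orthogonality statement: any element of $M^\cU$ commuting with $u_1$ must be asymptotically orthogonal to the ``far away'' subalgebra $(\{v_2\}'')^\cU$.

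First I would make precise the amalgamated free product decomposition of $M$ that isolates $u_1$: writing the graph as a line with vertices $u_1, v_1, v_2, v_3, u_2$, one has $M = \{u_1\}'' \otimes \{v_1\}'' *_{\{v_1\}''} (\text{graph product over } v_1,v_2,v_3,u_2)$, or more usefully $M = A *_B C$ where $B$ contains $\{v_1\}''$, $A$ contains $u_1$, and $C$ contains $v_2$ with $E_B(\text{words in } v_2) = 0$ for nontrivial powers. Then $u_1 \in \cH(A)$ satisfies $E_B(u_1^k) = 0$ for all $k$ (as $u_1$ is Haar and orthogonal to $B$). Now for $w \in \{u_1\}' \cap \cH(M^\cU)$, apply the Houdayer--Ioana free independence theorem (Theorem \ref{HI independence}): if additionally $E_{B^\cU}(w) = 0$, then $w$ would be freely independent from any Haar unitary in $C^\cU$ commuting with an element of $C$ — but this is not quite the statement needed. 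Rather, the cleaner route is to exploit asymptotic orthogonality directly: decompose $w$ over the Wysoczański/Caspers--Fima orthogonal decomposition of $L^2(M^\cU)$ according to reduced words, and show that the component of $w$ not orthogonal to $(\{v_2\}'')^\cU$ is forced to be $0$ by the commutation $[u_1,w]=0$, since $u_1$ acts by a ``rotation'' on the $v_2$-coordinate that has no nonzero fixed vectors (this is exactly where the quantitative estimate of Lemma \ref{lemma-lower-bound} enters, controlling $\sum |\lambda_{i,j} - \lambda_{i,j+1}|^2$ from below by a multiple of the mass $\sum_i |\lambda_{i,0}|^2$ sitting on the $v_2$-letter).

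Concretely, I would expand $w = \sum_{\text{reduced words } \xi} \lambda_\xi \xi$ in $L^2(M^\cU)$, group terms by whether the reduced word, when multiplied on the left by a power of $u_1$, stays reduced or collapses; since $u_1$ commutes only with $v_1$, multiplication by $u_1^n$ permutes most basis words and the constraint $u_1 w = w u_1$ translates into the $\lambda$'s being constant along $u_1$-orbits. The contribution $E_{(\{v_2\}'')^\cU}(w)$ corresponds to words that are pure powers of $v_2$ (and $1$). For such a word $v_2^j$, conjugating by $u_1$ produces $u_1 v_2^j u_1^{-1}$, and because $\{u_1\}''$ and $\{v_2\}''$ are freely independent (distance $\geq 2$ in the graph, no edge), the vectors $u_1^i v_2^j u_1^{-i}$ for distinct $i$ are orthogonal; writing the $v_2$-isotypic part of $w$ as $\sum_{i,j} \lambda_{i,j} u_1^i v_2^j u_1^{-i}$ (plus corrections), the commutation condition and the Haar-ness of $u_1$ force $\|[u_1,w]\|_2^2 \geq \sum_{i,j}|\lambda_{i,j} - \lambda_{i,j+1}|^2$, and Lemma \ref{lemma-lower-bound} shows this is $0$ only if $\sum_i |\lambda_{i,0}|^2 = 0$, i.e. $E_{(\{v_2\}'')^\cU}(w) = 0$.

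\textbf{Main obstacle.} The delicate point is making the word-expansion bookkeeping rigorous in the ultrapower $M^\cU$: a priori $w$ has a lift $(w_n)_n$ of elements of $M$ and one must take $\|\cdot\|_2$-limits of the Fourier-type expansions, ensuring that the passage to the ultralimit preserves the relevant orthogonality relations and that the tail of the expansion (words far from the $v_2$-axis) does not interfere. I expect to handle this by working entirely at the Hilbert-space level — using that $L^2(M^\cU)$ is not the ultrapower Hilbert space but that the relevant finite-dimensional computations (the estimate $\|[u_1,w]\|_2^2 \geq \sum|\lambda_{i,j}-\lambda_{i,j+1}|^2$) are $\|\cdot\|_2$-continuous and hence survive the limit — together with the general graph-product conditional-expectation estimates of Caspers--Fima. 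The combinatorial identification of precisely which reduced words contribute to each isotypic component, and verifying the displayed lower bound matches the hypothesis of Lemma \ref{lemma-lower-bound} (with the right constant $c$ being $\|E_{(\{v_2\}'')^\cU}(w)\|_2$), is the technical crux.
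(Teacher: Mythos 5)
Your proposal is correct and takes essentially the same route as the paper: expand a lift $w_n$ over the orthogonal family $u_1^j v_2^i u_1^{-j}$ (orthogonality coming from the free independence of $\{u_1\}''$ and $\{v_2\}''$, which are non-adjacent in the graph), note that this subspace and its complement are invariant under conjugation by $u_1$, bound $\|[u_1,w_n]\|_2^2$ below by $\sum_{i,j}|\lambda_{i,j,n}-\lambda_{i,j+1,n}|^2$, and invoke the quantitative Lemma \ref{lemma-lower-bound} with $c=\|E_{(\{v_2\}'')^\cU}(w)\|_2$ to contradict $[u_1,w]=0$. The only blemishes are a harmless index transposition (in your word $u_1^i v_2^j u_1^{-i}$ it is the $u_1$-exponent that is shifted by conjugation and whose zero slice carries $E_{(\{v_2\}'')^\cU}(w)$) and the amalgamated-free-product/Houdayer--Ioana detour of your first paragraph, which is not needed here but rather in the lemma that follows.
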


\begin{proof}
    Recall that the words $$r_K(u_1,v_1,v_2,v_3,u_2) = u_1^{k_1}v_1^{k_2}v_2^{k_3}v_3^{k_4}u_2^{k_5}u_1^{k_6}v_1^{k_7}\cdots$$ SOT-densely span $M$ where $K = (k_i)_i \in \bigoplus_{i\in\mathbb N} \mathbb Z$. Therefore, by Kaplansky's density theorem, we may write $w = (w_{n})_n$ where
    $w_{n} = \sum_{i,j\in\mathbb Z} \lambda_{i,j,n}u_1^jv_2^iu_1^{-j} + y_n $ where $y_n$ is in the span of words not of the form $u_1^jv_2^iu_1^{-j}$. Additionally, we note that we can take $\lambda_{0,j,n} = 0$ for all $j,n$ since $\tau(w) = 0$. Since $u_1$ and $v_2$ are freely independent, the words of the form $u_1^jv_2^iu_1^{-j}$ with $i\neq 0$ are orthogonal. We may further assume that for all $n,$
    \begin{align}
    \label{ineq1}
        \sum_{i,j}|\lambda_{i,j,n}^2| \leq 1,
    \end{align} since $w$ is a unitary.
    
    Then $E_{(\{v_2\}'')^\cU}(w) = (E_{\{v_2\}''}(w_{n}))_n = (\sum_{i\in \mathbb Z} \lambda_{i,0,n} v_2^i)_n.$ Each $v_2^i$ is orthogonal so that 
    $$\|E_{(\{v_2\}'')^\cU}(w)\|_2^2 = \lim_{n\to\cU}\sum_{i\in \mathbb Z} |\lambda_{i,0,n}|^2. $$

    Now suppose for a contradiction that $E_{(\{v_2\}'')^\cU}(w) \neq 0.$ Then there is $\delta>0$ such that $$\lim_{n\to\cU} \sum_{i\in \mathbb Z} |\lambda_{i,0,n}|^2 = 2\delta^2 > 0.$$ In particular, there exists $S\in\cU$ such that for all $n \in S$
    \begin{align}\label{ineq2}
        \sum_{i\in \mathbb Z} |\lambda_{i,0,n}|^2 > \delta^2.
    \end{align}

    Consider the subspace $X$ of $L^2(M)$ which is 2-norm-densely spanned by the words $u_1^jv_2^iu_1^{-j}$. It is clear that $X$ and $X^\perp$ are invariant under conjugation by $u$. 
    
    Since $w$ commutes with $u_1,$ there is $T\in\cU$ such that for all $n\in T$,$\|u_1w_{n}u_1^{-1} - w_{n}\|_2^2 < \frac{\delta^4}{3+18\delta^2}$.

    However, we compute that 
    \begin{align*}
        \|u_1w_{n}u_1^{-1} - w_{n}\|_2^2 &= \|\sum_{i,j\in\mathbb Z}( \lambda_{i,j,n}u_1^jv_2^iu_1^{-j} - \lambda_{i,j,n}u_1^{j+1}v_2^iu_1^{-j-1})\|_2^2 \\
        &+\|u_1y_nu_1^{-1} - y_n\|_2^2 \\
        &\geq \|\sum_{i,j\in\mathbb Z}(\lambda_{i,j,n} - \lambda_{i,j+1,n})u_1^jv_2^iu_1^{-j}\|_2^2 \\
        &= \sum_{i,j\in\mathbb Z}|\lambda_{i,j,n} - \lambda_{i,j+1,n}|^2.
    \end{align*}

    We now have that for all $n \in S\cap T,$  $\sum_{i,j}|\lambda_{i,j,n}^2| \leq 1$ by (\ref{ineq1}) and $\sum_i |\lambda_{i,0,n}|^2 \geq \delta^2$ by (\ref{ineq2}). By the previous lemma, we get that $$\|u_1w_{n}u_1^{-1} - w_{n}\|_2^2\geq \sum_{i,j\in\mathbb Z}|\lambda_{i,j,n} - \lambda_{i,j+1,n}|^2 \geq \frac{\delta^4}{3+18\delta^2},$$ a contradiction.
    
\end{proof}

\begin{lem}
    Suppose $w_i \in \{u_i\}' \cap \cH(M^\cU)$ for $i=1,2$. Then $w_1$ and $w_2$ are freely independent. In particular, they do not commute.
\end{lem}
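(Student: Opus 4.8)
The plan is to realize $M$ as an amalgamated free product and feed it into the Houdayer--Ioana independence theorem (Theorem \ref{HI independence}), using Lemma \ref{annoying lemma} to supply the one nonformal input. First I would split the defining path $u_1 - v_1 - v_2 - v_3 - u_2$ at its central vertex, setting $M_1 = \{u_1,v_1,v_2\}''$ and $M_2 = \{v_2,v_3,u_2\}''$ (the graph products over the two induced subpaths) and $B = \{v_2\}'' \cong L\mathbb Z$. Since no vertex of $\{u_1,v_1\}$ is adjacent to any vertex of $\{v_3,u_2\}$ in the graph, the standard decomposition of a graph product along a separating clique gives $M = M_1 *_B M_2$, with $B$ amenable and diffuse and $M$ a II$_1$ factor, so Theorem \ref{HI independence} is applicable to this splitting.

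Next I would check the hypotheses of Theorem \ref{HI independence} with $u_1 \in \mathcal H(M_1)$ and $u_2 \in \mathcal H(M_2)$ in the role of "$u_i$" and $w_1, w_2$ in the role of "$v_i$". The relation $[w_i,u_i]=0$ and the membership $w_i \in \mathcal H(M^\cU)$ are given. The condition $E_B(u_i^k)=0$ for all $k\in\mathbb N$ holds because $u_i$ and $v_2$ are non-adjacent vertices, hence generate freely independent subalgebras inside $M_i$, so $E_B(u_i^k) = \tau(u_i^k)1 = 0$ as $u_i$ is a Haar unitary. The condition $E_{B^\cU}(w_i)=0$ is exactly Lemma \ref{annoying lemma} when $i=1$; for $i=2$ I would transport it along the trace-preserving automorphism of $M$ (extended to $M^\cU$) induced by the order-reversing symmetry of the path, which swaps $u_1\leftrightarrow u_2$ and $v_1\leftrightarrow v_3$, fixes $v_2$, carries $\{u_2\}'\cap\mathcal H(M^\cU)$ onto $\{u_1\}'\cap\mathcal H(M^\cU)$, and commutes with $E_{B^\cU}$. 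Theorem \ref{HI independence} then yields that $w_1$ and $w_2$ are freely independent.

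Finally, for the "in particular" clause I would note that two freely independent Haar unitaries cannot commute: $\tau(w_1 w_2 w_1^{*} w_2^{*}) = 0$, since this is an alternating product of trace-zero elements drawn from the free subalgebras $\{w_1\}''$ and $\{w_2\}''$, whereas commutation would force $w_1 w_2 w_1^{*} w_2^{*} = 1$ and hence $\tau(w_1 w_2 w_1^{*} w_2^{*}) = 1$.

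I do not expect a serious obstacle here: all the analytic content has already been isolated, in Lemma \ref{annoying lemma} (which rests on the quantitative estimate of Lemma \ref{lemma-lower-bound}) and in the quoted Theorem \ref{HI independence}. The only points requiring care are the bookkeeping ones in the middle paragraph --- correctly identifying the amalgamated free product decomposition of the graph product, verifying the moment conditions $E_B(u_i^k)=0$ from non-adjacency, and confirming that Lemma \ref{annoying lemma} genuinely transfers to the index $i=2$ via the graph symmetry --- after which the statement follows formally.
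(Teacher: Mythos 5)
Your proposal is correct and follows the same route as the paper: the same splitting $M=\{u_1,v_1,v_2\}''*_{\{v_2\}''}\{v_2,v_3,u_2\}''$, with Lemma \ref{annoying lemma} supplying $E_{(\{v_2\}'')^{\cU}}(w_i)=0$ and Theorem \ref{HI independence} giving free independence. You merely make explicit some details the paper leaves tacit (the moment conditions $E_B(u_i^k)=0$, the graph symmetry handling $i=2$, and the trace computation for the ``in particular'' clause), all of which are fine.
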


\begin{proof}
    From the previous lemma we know that $E_{(\{v_2\}'')^\cU}(w_i) = 0$. Write $M$ as an amalgamated free product over $\{v_2\}''$ in the following way: $M = \{u_1,v_1,v_2\}'' *_{\{v_2\}''} \{v_2,v_3,u_2\}''$. Now apply Theorem \ref{HI independence} to conclude.
\end{proof}

\begin{proof}[Proof of Theorem \ref{thm-len3}]

    Suppose there were $w_1,w_2 \in \cH(M^\cU)$ such that $$[u_1,w_1] = [w_1,w_2] = [w_2,u_2] = 0.$$

    By the previous lemma, if $[u_1,w_1] = [w_2,u_2] = 0$ then $w_1$ and $w_2$ are freely independent, a contradiction.

    So the only paths between $u_1$ and $u_2$ must be length 4 or longer.
    
\end{proof}

\begin{proof}[Proof of Theorem \ref{thm-graph-radius}]

If $G = (V,E)$ is a graph and $M_v$ is a tracial diffuse von Neumann algebra for each $v\in V,$ then we can consider the graph product $M$ of the $M_v$ over $G.$ Suppose $v_1,v_2 \in V$ are vertices distance $d\geq 4$ apart. We use essentially the same argument as Theorem \ref{thm-len3} but one must amalgamate over a subgraph that separates $v_1$ and $v_2.$  Namely take $M_1$ to be the von Neumann algebra of  the subgraph containing the vertices that are  of distance at most 2 from $v_1$; $B$  to be the von Neumann algebra of  the subgraph containing the vertices that are  of distance  equal to 2 from $v_1$; $M_2$ to be the von Neumann algebra of  the subgraph containing the vertices that are  of distance greater than or equal to 2 from $v_1$. Note that there are no edges between the vertices of distance 0 or 1 from $v_1$ and vertices of distance 3 or more from $v_1$, so that we then have the decomposition $M\cong M_1*_B M_2$. An adaptation of the argument in Lemma \ref{annoying lemma} with words in multiple letters from $B$ as opposed to just a single letter, gets us the computation  that if $w \in \{v_1\}' \cap \cH(M^\cU)$, then $E_{{B}^\cU}(w) = 0.$   Then  applying Theorem \ref{HI independence}, as before we get that there is a pair of orthogonal unitaries $v_1,v_2\in M^\mathcal{U}$ such that there is no sequentially commuting length $3$  path of Haar unitaries between them, which implies that $M^\mathcal{U}$ is never isomorphic to $\mathcal{S}_2(L(SL_3(\mathbb{Z})))^{\mathcal{V}}$ for any ultrafilter $\mathcal{V}$ as required. 
    
\end{proof}

\bibliographystyle{amsalpha}
\bibliography{inneramen}

\newcommand{\etalchar}[1]{$^{#1}$}
\providecommand{\bysame}{\leavevmode\hbox to3em{\hrulefill}\thinspace}
\providecommand{\MR}{\relax\ifhmode\unskip\space\fi MR }
\providecommand{\MRhref}[2]{%
  \href{http://www.ams.org/mathscinet-getitem?mr=#1}{#2}
}
\providecommand{\href}[2]{#2}
\begin{thebibliography}{CdSH{\etalchar{+}}23b}

\bibitem[AGKE22]{AGKE}
Scott Atkinson, Isaac Goldbring, and Srivatsav Kunnawalkam~Elayavalli, \emph{Factorial relative commutants and the generalized {J}ung property for {II}$_1$ factors}, Advances in Mathematics \textbf{396} (2022), 108107.

\bibitem[Atk16]{thesis}
S.~Atkinson, \emph{Convex sets associated to {$C^*$}-algebras}, Ph.D. thesis, University of Virginia, 2016.

\bibitem[BC22]{PTkilled}
Serban Belinschi and Mireille Capitaine, \emph{Strong convergence of tensor products of independent {G.U.E.} matrices}, available at arXiv:2205.07695 (2022).

\bibitem[BC23]{bordenave2023norm}
Charles Bordenave and Benoit Collins, \emph{Norm of matrix-valued polynomials in random unitaries and permutations}, 2023.

\bibitem[BCI17]{BCI15}
R\'{e}mi Boutonnet, Ionu\c{t} Chifan, and Adrian Ioana, \emph{{$\rm II_1$} factors with nonisomorphic ultrapowers}, Duke Math. J. \textbf{166} (2017), no.~11, 2023--2051. \MR{3694564}

\bibitem[BDJ08]{DykemaJungBrown}
N.~Brown, K.~Dykema, and K.~Jung, \emph{Free entropy dimension in amalgamated free products}, Proc. London Math. Soc. \textbf{97} (2008), no.~2, 339--367.

\bibitem[Bro05]{brownimrn}
Nathanial~P. Brown, \emph{{Finite free entropy and free group factors}}, International Mathematics Research Notices \textbf{2005} (2005), no.~28, 1709--1715.

\bibitem[Bro22]{brothier2022forestskein}
Arnaud Brothier, \emph{Forest-skein groups {I}: between {V}aughan {J}ones' subfactors and {R}ichard {T}hompson's groups}, 2022.

\bibitem[CdSH{\etalchar{+}}23a]{charlesworth2023strong}
Ian Charlesworth, Rolando de~Santiago, Ben Hayes, David Jekel, Srivatsav Kunnawalkam~Elayavalli, and Brent Nelson, \emph{Strong 1-boundedness, $l^2$-{B}etti numbers, algebraic soficity, and graph products}, 2023.

\bibitem[CdSH{\etalchar{+}}23b]{charlesworth2023strong2}
\bysame, \emph{Structural properties of graph products of von {N}eumann algebras}, 2023.

\bibitem[CF17]{CaFi17}
Martijn Caspers and Pierre Fima, \emph{Graph products of operator algebras}, J. Noncommut. Geom \textbf{11} (2017), no.~1, 367--411.

\bibitem[CIKE23]{exoticCIKE}
Ionu{\c t} Chifan, Adrian Ioana, and Srivatsav Kunnawalkam~Elayavalli, \emph{An exotic {II}$_1$ factor without property {G}amma}, Geometric and Functional Analysis (2023).

\bibitem[Con76]{Connes}
A.~Connes, \emph{Classification of injective factors. {C}ases {$II_{1},$} {$II_{\infty },$} {$III_{\lambda },$} {$\lambda \not=1$}}, Ann. of Math. (2) \textbf{104} (1976), no.~1, 73--115. \MR{454659}

\bibitem[CP12]{capraro2012product}
Valerio Capraro and Liviu P{\u{a}}unescu, \emph{Product between ultrafilters and applications to {C}onnes' embedding problem}, Journal of Operator Theory (2012), 165--172.

\bibitem[Dyk97]{DykemaFreeEntropy}
Kenneth~J. Dykema, \emph{Two applications of free entropy}, Math. Ann. \textbf{308} (1997), no.~3, 547--558. \MR{1457745}

\bibitem[FGL06]{FGL}
Junsheng Fang, Liming Ge, and Weihua Li, \emph{Central sequence algebras of von {N}eumann algebras}, Taiwanese J. Math. \textbf{10} (2006), no.~1, 187--200. \MR{2186173}

\bibitem[FHS14]{FHS}
Ilijas Farah, Bradd Hart, and David Sherman, \emph{Model theory of operator algebras {III}: elementary equivalence and {$\rm II_1$} factors}, Bull. Lond. Math. Soc. \textbf{46} (2014), no.~3, 609--628. \MR{3210717}

\bibitem[Gab00]{gaboriau2000cout}
Damien Gaboriau, \emph{Co{\^u}t des relations d’{\'e}quivalence et des groupes}, Inventiones mathematicae \textbf{139} (2000), no.~1, 41--98.

\bibitem[Gao08]{gao2008invariant}
Su~Gao, \emph{Invariant descriptive set theory}, CRC Press, 2008.

\bibitem[GH17]{GH}
Isaac Goldbring and Bradd Hart, \emph{On the theories of {M}c{D}uff's {$\rm II_1$} factors}, Int. Math. Res. Not. IMRN (2017), no.~18, 5609--5628. \MR{3704741}

\bibitem[GJKEP23]{goldbring2023uniformly}
Isaac Goldbring, David Jekel, Srivatsav Kunnawalkam~Elayavalli, and Jennifer Pi, \emph{Uniformly super {M}c{D}uff {II}$_1$ factors}, 2023.

\bibitem[Gol21]{goldberg2021products}
Gabriel Goldberg, \emph{On products of ultrafilters}, arXiv preprint arXiv:2102.04677 (2021).

\bibitem[GP17]{GalatanPopa}
Alin Galatan and Sorin Popa, \emph{Smooth bimodules and cohomology of {$\rm II_1$} factors}, J. Inst. Math. Jussieu \textbf{16} (2017), no.~1, 155--187. \MR{3591964}

\bibitem[Gre90]{Gr90}
Elisabeth~Ruth Green, \emph{Graph products of groups}, Ph.D. thesis, University of Leeds, 1990.

\bibitem[GS02]{GeShenPropT}
Liming Ge and Junhao Shen, \emph{On free entropy dimension of finite von {N}eumann algebras}, Geom. Funct. Anal. \textbf{12} (2002), no.~3, 546--566. \MR{1924371}

\bibitem[GS14]{GeShenGen}
L.~Ge and J.~Shen, \emph{Generator problem for certain property t factors}, Proc. Natl. Acad. Sci. USA \textbf{99} (2014), no.~2, 565--567.

\bibitem[Hay18]{Hayes2018}
Ben Hayes, \emph{1-bounded entropy and regularity problems in von {N}eumann algebras}, Int. Math. Res. Not. IMRN (2018), no.~1, 57--137. \MR{3801429}

\bibitem[Hay22]{HayesPT}
\bysame, \emph{A random matrix approach to the {P}eterson-{T}hom conjecture}, Indiana Univ. Math. J. \textbf{71} (2022), no.~3, 1243--1297. \MR{4448584}

\bibitem[HI02]{hensoniovino}
C.~W. Henson and J.~Iovino, \emph{Ultraproducts in analysis}, Analysis and logic ({M}ons, 1997), London Math. Soc. Lecture Note Ser., vol. 262, Cambridge Univ. Press, Cambridge, 2002, pp.~1--110.

\bibitem[HI23]{houdayer2023asymptotic}
Cyril Houdayer and Adrian Ioana, \emph{Asymptotic freeness in tracial ultraproducts}, 2023.

\bibitem[HJKE21]{HJKE1}
Ben Hayes, David Jekel, and Srivatsav Kunnawalkam~Elayavalli, \emph{{Property (T) and strong $1$-boundedness for von Neumann algebras}}, 2021.

\bibitem[HJKE23]{HJKEPT}
\bysame, \emph{Consequences of the random matrix solution to the {P}eterson-{T}hom conjecture}, 2023.

\bibitem[HJNS21]{FreePinsker}
Ben Hayes, David Jekel, Brent Nelson, and Thomas Sinclair, \emph{A random matrix approach to absorption in free products}, Int. Math. Res. Not. IMRN (2021), no.~3, 1919--1979. \MR{4206601}

\bibitem[IPP08]{IPP08}
Adrian Ioana, Jesse Peterson, and Sorin Popa, \emph{Amalgamated free products of weakly rigid factors and calculation of their symmetry groups}, Acta Math. \textbf{200} (2008), no.~1, 85--153. \MR{2386109}

\bibitem[Jek23]{Jekeltypes}
David Jekel, \emph{Covering entropy for types in tracial {$\rm W^*$}-algebras}, J. Log. Anal. \textbf{15} (2023), Paper No. 2, 68. \MR{4600403}

\bibitem[Jun07]{JungSB}
Kenley Jung, \emph{Strongly 1-bounded von {N}eumann algebras}, Geom. Funct. Anal. \textbf{17} (2007), no.~4, 1180--1200. \MR{2373014}

\bibitem[KE23]{elayavalli2023remarks}
Srivatsav Kunnawalkam~Elayavalli, \emph{Remarks on the diagonal embedding and strong 1-boundedness}, 2023.

\bibitem[Kei10]{keisler2010ultraproduct}
H~Jerome Keisler, \emph{The ultraproduct construction}, Ultrafilters across mathematics \textbf{530} (2010), 163--179.

\bibitem[McD69]{McD1}
Dusa McDuff, \emph{Uncountably many {${\rm II}_{1}$} factors}, Ann. of Math. (2) \textbf{90} (1969), 372--377. \MR{259625}

\bibitem[McD70]{McD2}
\bysame, \emph{Central sequences and the hyperfinite factor}, Proc. London Math. Soc. (3) \textbf{21} (1970), 443--461. \MR{281018}

\bibitem[MR17]{mei}
Tao Mei and \'{E}ric Ricard, \emph{Free {H}ilbert transforms}, Duke Math. J. \textbf{166} (2017), no.~11, 2153--2182. \MR{3694567}

\bibitem[MvN43]{MuvN43}
F.~J. Murray and J.~von Neumann, \emph{On rings of operators. {IV}}, Ann. of Math. (2) \textbf{44} (1943), 716--808.

\bibitem[Pop84]{Popa83}
Sorin Popa, \emph{On derivations into the compacts and some properties of type {${\rm II}_1$} factors}, Spectral theory of linear operators and related topics ({T}imi\c{s}oara/{H}erculane, 1983), Oper. Theory Adv. Appl., vol.~14, Birkh\"{a}user, Basel, 1984, pp.~221--227. \MR{789619}

\bibitem[Pop87]{popa1987commutant}
\bysame, \emph{The commutant modulo the set of compact operators of a von {N}eumann algebra}, J. Funct. Anal. \textbf{71} (1987), no.~2, 393--408.

\bibitem[Pop14]{Popaindep}
\bysame, \emph{Independence properties in subalgebras of ultraproduct {$\rm II_1$} factors}, J. Funct. Anal. \textbf{266} (2014), no.~9, 5818--5846. \MR{3182961}

\bibitem[PT11]{PetersonThom}
Jesse Peterson and Andreas Thom, \emph{Group cocycles and the ring of affiliated operators}, Invent. Math. \textbf{185} (2011), no.~3, 561--592. \MR{2827095}

\bibitem[Shl09]{Shlyakhtenkononinnercocycles}
Dimitri Shlyakhtenko, \emph{{Lower estimates on microstates free entropy dimension}}, Analysis \& PDE \textbf{2} (2009), no.~2, 119 -- 146.

\bibitem[Sil80]{silver1980counting}
Jack~H Silver, \emph{Counting the number of equivalence classes of {B}orel and coanalytic equivalence relations}, Annals of Mathematical Logic \textbf{18} (1980), no.~1, 1--28.

\bibitem[SS19]{shen2019reducible}
Junhao Shen and Rui Shi, \emph{Reducible operators in non-{$\Gamma$} type {${\rm II}$}$_1$ factors}, 2019.

\bibitem[Voi94]{VoiculescuFreeEntropy2}
Dan-Virgil Voiculescu, \emph{The analogues of entropy and of {F}isher's information measure in free probability theory {II}}, Invent. Math. \textbf{118} (1994), no.~3, 411--440. \MR{1296352}

\bibitem[Voi99]{VoiculescuPropT}
Dan Voiculescu, \emph{Free entropy dimension {$\leq 1$} for some generators of property {$T$} factors of type {${\rm II}_1$}}, J. Reine Angew. Math. \textbf{514} (1999), 113--118. \MR{1711283}

\end{thebibliography}

\end{document}